\providecommand{\U}[1]{\protect\rule{.1in}{.1in}}
\providecommand{\U}[1]{\protect\rule{.1in}{.1in}}
\newtheorem{theorem}{Theorem}[section]
\newtheorem{proposition}{Proposition}[section]
\newtheorem{lemma}{Lemma}[section]
\renewcommand{\@biblabel}[1]{}
\begin{document}

\begin{center}
{\Large Robust and Smooth Estimation of the Extreme Tail Index via Weighted
Minimum Density Power Divergence}\bigskip

{\large Saida Mancer, Abdelhakim Necir}$^{\ast},$ {\large Djamel
Meraghni}\medskip\\[0pt]

{\small \textit{Laboratory of Applied Mathematics, Mohamed Khider University,
Biskra, Algeria}}\medskip\medskip
\end{center}

\noindent By introducing a weight function into the density power divergence,
we develop a new class of robust and smooth estimators for the tail index of
Pareto-type distributions, offering improved efficiency in the presence of
outliers. These estimators can be viewed as a robust generalization of both
weighted least squares and kernel-based tail index estimators. We establish
the consistency and asymptotic normality of the proposed class. A simulation
study is conducted to assess their finite-sample performance in comparison
with existing methods. \medskip\medskip

\noindent\textbf{Keywords:} Extreme value index; Least squares estimation;
Minimum density power divergence; Robustness. \medskip

\noindent\textbf{AMC 2020 Subject Classification:} 62G32; 62G05; 62G20; 62G35.

\vfill

\vfill

\noindent{\small $^{\text{*}}$Corresponding author:
\texttt{ah.necir@univ-biskra.dz}\newline\noindent\textit{E-mail address:}%
\newline\texttt{mancer.saida731@gmail.com} (S.~Mancer)\newline%
\texttt{djamel.meraghni@univ-biskra.dz} (D.~Meraghni)}

\section{\textbf{Introduction\label{sec1}}}

\noindent Let $X_{1},...,X_{n}$ be independent and identically distributed
(iid) of non-negative random variables (rv's) viewed as $n$ copies of a rv
$X,$ defined over some probability space $\left(  \Omega,\mathcal{A}%
,\mathbf{P}\right)  ,$ with cumulative distribution function (cdf) $F.$ We
assume that the tail distribution $\overline{F}:=1-F$ is regularly varying at
infinity\ with negative index $\left(  -1/\gamma\right)  ,$ i.e., for every
$x>0,$%
\begin{equation}
\frac{\overline{F}\left(  ux\right)  }{\overline{F}\left(  u\right)
}\rightarrow x^{-1/\gamma},\text{ as }u\rightarrow\infty. \label{RV}%
\end{equation}
The parameter $\gamma>0$ often referred to as the shape parameter, tail index,
or extreme value index (EVI), plays a fundamental role in extreme value
analysis, as it characterizes the heaviness of the distribution's right tail.
The estimation of $\gamma$ has received considerable attention over the past
four decades. The most widely used estimator of $\gamma$ is Hill's estimator
\citep{Hill75} defined as%
\[
\widehat{\gamma}_{k}^{\left(  H\right)  }:=\frac{1}{k}%
{\displaystyle\sum\limits_{i=1}^{k}}
\log\frac{X_{n-i+1:n}}{X_{n-k:n}}=%
{\displaystyle\sum\limits_{i=1}^{k}}
\frac{i}{k}\log\frac{X_{n-i+1:n}}{X_{n-i:n}},
\]
where $X_{1:n}\leq...\leq X_{n:n}$ denote the order statistics pertaining to
the sample $X_{1},...,X_{n}$ and $k=k_{n}$ is an integer sequence satisfying
$1<k<n,$ $k\rightarrow\infty$ and $k/n\rightarrow0$ as $n\rightarrow\infty.$
The discrete nature and lack of stability of Hill's estimator constitute
significant drawbacks. In particular, the inclusion of a single additional
upper-order statistic, i.e. increasing $k$ by one, can lead to substantial
deviations from the true value of the tail index, reflecting the estimator's
sensitivity to the choice of the threshold $k.$ As a result, plotting this
estimator as a function of the number of upper-order statistics typically
produces a zigzag pattern, which leads to instability and fluctuations in
determining the optimal sample fraction used in the estimation of the
computation of \textbf{\ }$\widehat{\gamma}_{k}^{\left(  H\right)  }%
\mathbf{.}$\textbf{\ }To address this issue, \cite{CDM85} (CDM) introduced
more general weights in place of the natural weight $i/k$ appearing in the
second expression of Hill's estimator $\widehat{\gamma}_{k}^{\left(  H\right)
},$ leading to the definition of the following kernel-type estimator:
\begin{equation}
\widehat{\gamma}_{k,\mathbb{K}}^{\left(  CDM\right)  }:=%
{\displaystyle\sum_{i=1}^{k}}
\dfrac{i}{k}\mathbb{K}\left(  \dfrac{i}{k+1}\right)  \log\dfrac{X_{n-i+1:n}%
}{X_{n-i:n}}, \label{cdm}%
\end{equation}
where $\mathbb{K}$ is a continuous nonnegative nonincreasing function on
$\left(  0,1\right)  $ such that $\int_{0}^{1}\mathbb{K}\left(  s\right)
ds=1.$ For their part, \cite{HLM2006} used the weighted least squares
estimator (WLSE) given by%
\begin{equation}
\widehat{\gamma}_{k,J}:=\left(  \beta k\right)  ^{-1}%
{\displaystyle\sum\limits_{i=1}^{k}}
J\left(  \dfrac{i}{k+1}\right)  \log\dfrac{X_{n-i+1:n}}{X_{n-k:n}},
\label{HLM}%
\end{equation}
where $\beta:=-\int_{0}^{1}J\left(  s\right)  \log sds$ and $J$ is a suitable
continuous nonnegative nonincreasing function defined on $\left(  0,1\right)
$ and vanishes elsewhere such that $\int_{0}^{1}J\left(  s\right)  ds=1.$
Similar estimators to $\widehat{\gamma}_{k,J}$ are also considered in
\cite{BVT96}, \cite{V99} and recently \cite{CMS21}. The authors pointed out
that the least squares estimator $\widehat{\gamma}_{k,J}$ may be rewritten
into CDM's one $\widehat{\gamma}_{k,\mathbb{K}}^{\left(  CDM\right)  }$ for
the kernel function%
\begin{equation}
\mathbb{K}\left(  s\right)  =\left(  s\beta\right)  ^{-1}\int_{0}^{s}J\left(
t\right)  dt,\text{ for }s\in\left(  0,1\right)  . \label{f}%
\end{equation}
For example, by choosing $J_{\log}\left(  s\right)  :=-\mathbb{I}_{\left(
0,1\right)  }\left(  s\right)  \log s,$ we obtain
\[
\mathbb{K}\left(  s\right)  =\mathbb{I}_{\left(  0,1\right)  }\left(
s\right)  \left(  1-\log s\right)  .
\]
However, the converse does not necessarily hold; that is $\mathbb{K}$ may be a
kernel function without $J$ being one. For instance, if we take $\mathbb{K}%
\left(  s\right)  :=-\mathbb{I}_{\left(  0,1\right)  }\left(  s\right)  \log
s,$ then the corresponding function $J\left(  s\right)  $ is identically zero.
Here, $\mathbb{I}_{A}$ denotes the indicator function of the set $A.$ The most
commonly used weighted functions $J:$ the indicator function $J_{0}%
:=\mathbb{I}_{\left(  0,1\right)  },$ as well as the Epanechnikov, biweight,
triweigh and quadweight weighted functions which are defined on the interval
$0<s<1$ by%
\begin{equation}%
\begin{array}
[c]{rl}%
J_{1}\left(  s\right)  :=\dfrac{2}{3}\left(  1-s^{2}\right)  , & J_{2}\left(
s\right)  :=\dfrac{15}{8}\left(  1-s^{2}\right)  ^{2},\medskip\\
J_{3}\left(  s\right)  :=\dfrac{35}{16}\left(  1-s^{2}\right)  ^{3}, &
J_{4}\left(  s\right)  :=\dfrac{315}{128}\left(  1-s^{2}\right)  ^{4},
\end{array}
\label{J}%
\end{equation}
and zero elsewhere respectively. For the use of this type of weight functions,
we refer the reader to \cite{GLW2003} and \cite{HLM2006}.\medskip

\noindent The widely adopted approach for estimating the parameters of an
extreme value distribution in extreme value analysis is the maximum likelihood
estimation (MLE) method. While MLEs enjoy desirable asymptotic properties,
they can be highly sensitive to outliers or deviations from the assumed
extreme value models \cite{BS2000}. Robust statistics offer a valuable
alternative for mitigating the influence of outliers, and consequently,
reducing deviations from the underlying parametric models. Incorporating
robust statistical methods into extreme value theory has been shown to enhance
both the accuracy and reliability of parameter estimates \citep[][]{DE2006}.
\cite{JS2004} appear to be the first to employ the Minimum Density Power
Divergence (MDPD) introduced by \cite{Basu98} for the robust estimation of
parameters in extreme value distributions. Since then, this divergence measure
has become one of the most widely used tools for robust parameter estimation
in the context of extreme value theory. Several authors, including
\cite{Kim2008}, \cite{DGG13}, \cite{GGV2014}, \cite{DGG2021} have employed the
MDPD framework to estimate the tail index and quantiles of Pareto-type
distributions.\medskip\ 

\noindent Recently, \cite{Ghosh2017} proposed a robust MDPD-based estimator
for a real-valued tail index, serving as a robust generalization of the
estimator introduced by \cite{MB2003}. This approach also accounts for the
non-identically distributed structure of the exponential regression model,
following the methodology of \cite{GB2013}. Additionally, \cite{DGG13} applied
the MDPD framework to an extended Pareto distribution to model relative
excesses over a high threshold. More recently, \cite{MWG2023} developed a
robust estimator for the tail index of a Pareto-type distribution by applying
the MDPD approach within the exponential regression model framework.\medskip

\noindent We are thus faced with two main challenges: robustness and the
instability of the optimal sample fraction used in the computation of tail
index estimators. To address these issues, we propose assigning a weight
function $J$ to the density power divergence, leading to a robust
generalization of the weighted least squares tail index estimator
$\widehat{\gamma}_{k,J}.$ To the best of our knowledge, this approach has not
yet been explored in the existing literature. The proposed estimation
procedure is presented in detail in Section $\ref{sec3}.$\medskip

\noindent The remainder of this paper is organized as follows. Section
\ref{sec2} provides a brief overview of the MDPD estimation method, originally
introduced by \cite{Basu98}. The proposed weighted MDPD-based estimation of
the tail index is developed in Section \ref{sec3}. Section \ref{sec4} presents
the asymptotic properties of the proposed estimator, specifically its
consistency and asymptotic normality, with detailed proofs deferred to Section
\ref{sec6}. The finite-sample performance of the estimator is evaluated in
Section \ref{sec5} through a simulation study, including comparisons with
existing estimators. Section \ref{sec7} collects several technical lemmas and
propositions, while Section \ref{sec8} gathers the figures corresponding to
the simulation results.

\newpage

\section{\textbf{Minimum density power divergence\label{sec2}}}

\noindent Given two probability densities $\ell$ and $h,$ \cite{Basu98}
introduced a new distance between them called the density power divergence
\begin{equation}
d_{\alpha}\left(  h,\ell\right)  =\left\{
\begin{array}
[c]{ll}%
\int_{\mathbb{R}}\left[  \ell^{1+\alpha}\left(  x\right)  -\left(  1+\dfrac
{1}{\alpha}\right)  \ell^{\alpha}\left(  x\right)  h\left(  x\right)
+\dfrac{1}{\alpha}h^{1+\alpha}\left(  x\right)  \right]  dx, & \alpha
>0\medskip\\
\int_{\mathbb{R}}h\left(  x\right)  \log\dfrac{h\left(  x\right)  }%
{\ell\left(  x\right)  }dx, & \alpha=0,
\end{array}
\right.  \label{d}%
\end{equation}
where $\alpha$ is a nonnegative tuning parameter. The case corresponding to
$\alpha=0$ is obtained from the general case by letting $\alpha\rightarrow0$
leading to the classical Kullback-Leibler divergence denoted $d_{0}\left(
h,\ell\right)  .$ Let us consider a parametric model of densities $\left\{
\ell_{\theta}:\Theta\subset\mathbb{R}^{p}\right\}  $ and suppose that we
consider the estimation of the parameter $\theta.$ Let $H$ be the cdf
corresponding to the density $h.$ The minimum density power divergence (MDPD)
is a functional $T_{\alpha}\left(  H\right)  $ defined by $d_{\alpha}\left(
h,\ell_{T_{\alpha}\left(  H\right)  }\right)  =\min_{\theta\in\Theta}%
d_{\alpha}\left(  h,\ell_{\theta}\right)  .$ It is clear that the term $\int
h^{1+\alpha}\left(  x\right)  dx$ in $\left(  \ref{d}\right)  $ does not
contribute in the minimization of $d_{\alpha}\left(  h,\ell_{\theta}\right)  $
over $\theta\in\Theta.$ Then minimization in the computation of the MDPD
functional $T_{\alpha}\left(  H\right)  $ reduces to%
\begin{equation}
\delta_{\alpha}\left(  h;\theta\right)  :=\left\{
\begin{array}
[c]{lc}%
\int_{\mathbb{R}}\ell_{\theta}^{1+\alpha}\left(  x\right)  dx-\left(
1+\dfrac{1}{\alpha}\right)  \int_{\mathbb{R}}\ell_{\theta}^{\alpha}\left(
x\right)  dH\left(  x\right)  , & \alpha>0,\medskip\\
-\int_{\mathbb{R}}\log\ell_{\theta}\left(  x\right)  dH\left(  x\right)  , &
\alpha=0.
\end{array}
\right.  \label{dstart}%
\end{equation}
Given a random sample $Z_{1},...,Z_{n}$ from the distribution $H$ we may
estimate the objective function $h$ in $\left(  \ref{dstart}\right)  $ by
substituting $H$ with its empirical counterpart $H_{n}.$ For a given tuning
parameter $\alpha,$ the MDPD estimator $\widehat{\theta}_{n,\alpha}$ of
$\theta$ may be obtained by minimizing (over $\theta\in\Theta)$ the quantity%
\[
\delta_{n.\alpha}^{\ast}\left(  \theta\right)  :=\left\{
\begin{array}
[c]{lc}%
\int_{\mathbb{R}}\ell_{\theta}^{1+\alpha}\left(  x\right)  dx-\left(
1+\dfrac{1}{\alpha}\right)  \int_{\mathbb{R}}\ell_{\theta}^{\alpha}\left(
x\right)  dH_{n}\left(  x\right)  , & \alpha>0,\medskip\\
-\int_{\mathbb{R}}\log\ell_{\theta}\left(  x\right)  dH_{n}\left(  x\right)
, & \alpha=0,
\end{array}
\right.  ,
\]
which in turn equals
\[
\left\{
\begin{array}
[c]{lc}%
\int_{\mathbb{R}}\ell_{\theta}^{1+\alpha}\left(  x\right)  dx-\left(
1+\dfrac{1}{\alpha}\right)  \dfrac{1}{n}%
{\displaystyle\sum\limits_{i=1}^{n}}
\ell_{\theta}^{\alpha}\left(  Z_{i}\right)  , & \alpha>0,\medskip\\
-\dfrac{1}{n}%
{\displaystyle\sum\limits_{i=1}^{n}}
\log\ell_{\theta}\left(  Z_{i}\right)  , & \alpha=0.
\end{array}
\right.  .
\]
Thus the MDPD estimator $\widehat{\theta}_{n,\alpha}$ of $\theta$ minimizing
$\delta_{n,\alpha}^{\ast}\left(  \theta\right)  $ will be a solution of the
following equation
\[
\left\{
\begin{array}
[c]{lc}%
\int_{\mathbb{R}}\dfrac{d}{d\theta}\ell_{\theta}^{1+\alpha}\left(  x\right)
dx-\left(  1+\dfrac{1}{\alpha}\right)  \dfrac{1}{n}%
{\displaystyle\sum\limits_{i=1}^{n}}
\dfrac{d}{d\theta}\ell_{\theta}^{\alpha}\left(  Z_{i}\right)  =0, &
\alpha>0,\medskip\\
\dfrac{1}{n}%
{\displaystyle\sum\limits_{i=1}^{n}}
\dfrac{d}{d\theta}\log\ell_{\theta}\left(  Z_{i}\right)  =0, & \alpha=0.
\end{array}
\right.
\]
The tuning parameter $\alpha$ plays a crucial role in the MDPD framework, as
it governs the trade-off between efficiency and robustness. Specifically, when
$\alpha$ is close to zero, the estimator is more efficient but less robust to
outliers. Conversely, as $\alpha$ increases, robustness improves at the cost
of some loss in efficiency. It has been shown that estimators with small
values of $\alpha$ can exhibit strong robustness properties while incurring
only a slight loss in asymptotic efficiency relative to the maximum likelihood
estimator under correct model specification.

\section{\textbf{Weighted MDPD estimation of the tail index\label{sec3}}}

\noindent As stated in Lemma $\ref{lemma1},$ we have%
\begin{equation}
\gamma_{u}\left(  F\right)  :=\beta^{-1}\int_{1}^{\infty}\log xdF_{u,J}\left(
x\right)  =\gamma+o\left(  1\right)  , \label{lim1}%
\end{equation}
as $u\rightarrow\infty,$ where%
\[
F_{u,J}\left(  x\right)  :=\int_{1}^{x}J\left(  \overline{F}_{u}\left(
t\right)  \right)  dF_{u}\left(  t\right)  ,\text{ with }F_{u}\left(
t\right)  =F\left(  ut\right)  /\overline{F}\left(  t\right)  ,
\]
and $J$ is the weight function introduced in $\left(  \ref{HLM}\right)  .$
From equation $\left(  \ref{lim1}\right)  ,$ we deduce that $\gamma_{u}\left(
F\right)  ,$ for large $u,$ represents the tail functional associated with the
tail index estimator $\widehat{\gamma}_{k,J}$ defined in $\left(
\ref{HLM}\right)  .$ Since $J\left(  t\right)  $ vanishes at $t=1$ and
$\overline{F}$ is continuous, it is more appropriate to use $J\left(
\overline{F}\left(  ux\right)  /\overline{F}\left(  u-\right)  \right)  $
instead of $J\left(  \overline{F}\left(  ux\right)  /\overline{F}\left(
u\right)  \right)  ,$ where
\[
\overline{F}\left(  t-\right)  :=\lim_{\epsilon\uparrow0}\overline{F}\left(
t\right)  .
\]
Accordingly, the integral in $\left(  \ref{lim1}\right)  $ becomes
\[
\int_{1}^{\infty}\log xdF_{u,J}\left(  x\right)  =\int_{u}^{\infty}J\left(
\dfrac{\overline{F}\left(  ux\right)  }{\overline{F}\left(  u-\right)
}\right)  \log\dfrac{x}{u}d\dfrac{F\left(  ux\right)  }{\overline{F}\left(
u\right)  }.
\]
This modification will be adopted throughout the empirical estimation of tail
functionals. Specifically, in $\gamma_{u}\left(  F\right)  ,$ we replace the
threshold $u$ with the intermediate order statistic $X_{n-k:n}$ and the cdf
$F$ with its empirical counterpart
\[
F_{n}\left(  x\right)  :=n^{-1}\sum_{i=1}^{n}\mathbb{I}_{\left\{  X_{i:n}\leq
x\right\}  },
\]
to obtain%
\begin{align*}
&  \beta^{-1}\int_{X_{n-k:n}}^{\infty}J\left(  \dfrac{\overline{F}_{n}\left(
x\right)  }{\overline{F}_{n}\left(  X_{n-k:n}-\right)  }\right)  \log\dfrac
{x}{X_{n-k:n}}d\dfrac{F_{n}\left(  x\right)  }{\overline{F}_{n}\left(
X_{n-k:n}\right)  }\\
&  =\dfrac{\beta^{-1}}{k}%
{\displaystyle\sum\limits_{i=1}^{k}}
J\left(  \dfrac{i}{k+1}\right)  \log\dfrac{X_{n-i+1:n}}{X_{n-k:n}},
\end{align*}
which corresponds exactly to the expression of $\widehat{\gamma}_{k,J}.$ On
the other hand, under appropriate assumptions on the weight function $J,$ we
stated in Lemma $\ref{lemma1}$ that assertion $\left(  \ref{lim1}\right)  $ is
equivalent to%
\begin{equation}
\int_{1}^{\infty}f_{u,J}\left(  x\right)  \dfrac{d}{d\gamma}\log\ell
_{\gamma,J}\left(  x\right)  dx=o\left(  1\right)  , \label{lim3}%
\end{equation}
as $u\rightarrow\infty,$ or equivalently
\begin{equation}
\int_{1}^{\infty}J\left(  \dfrac{\overline{F}\left(  ux\right)  }{\overline
{F}\left(  u\right)  }\right)  \log\ell_{\gamma,J}\left(  x\right)
d\dfrac{F\left(  ux\right)  }{\overline{F}\left(  u\right)  }=o\left(
1\right)  , \label{lim3-prime}%
\end{equation}
as $u\rightarrow\infty,$ where $f_{u}\left(  x\right)  :=dF_{u}\left(
x\right)  /dx,$
\begin{equation}
\ell_{\gamma,J}\left(  x\right)  :=J\left(  x^{-1/\gamma}\right)  \ell
_{\gamma}\left(  x\right)  \label{l-J-gamma}%
\end{equation}
and
\begin{equation}
\ell_{\gamma}\left(  x\right)  :=\gamma^{-1}x^{-1-1/\gamma},\text{ }x>1,\text{
}\gamma>0, \label{l-gamma}%
\end{equation}
denotes the standard Pareto density function. From this, we infer that the
integral equation above leads to the same tail functional $\gamma_{u}\left(
F\right)  .$ In other words, the estimator $\widehat{\gamma}_{k,J}$ may be
viewed as the solution to the empirical counterpart of that equation, or
equivalently $\left(  \ref{lim3-prime}\right)  ,$ which reads:%
\[
\int_{1}^{\infty}J\left(  \dfrac{\overline{F}_{n}\left(  X_{n-k:n}x\right)
}{\overline{F}_{n}\left(  X_{n-k:n}-\right)  }\right)  \log\ell_{\gamma
,J}\left(  x\right)  d\dfrac{F_{n}\left(  X_{n-k:n}x\right)  }{\overline
{F}_{n}\left(  X_{n-k:n}\right)  }=0.
\]
In conclusion, the tail index estimator $\widehat{\gamma}_{k,J}$ can be viewed
as the result of minimizing the Kullback--Leibler divergence between two
weighted densities, $f_{u,J}$ and $\ell_{\gamma,J}.$ This leads us to consider
the general case $\alpha>0$ of the weighted minimum density power divergence
(WMDPD), between $f_{u}$ and $\ell_{\gamma},$ by
\[
d_{\alpha,J}\left(  f_{u},\ell_{\gamma}\right)  :=d_{\alpha}\left(
f_{u,J},\ell_{\gamma,J}\right)  ,
\]
in the sense that
\begin{equation}
d_{\alpha,J}\left(  f_{u},\ell_{\gamma}\right)  :=\int_{1}^{\infty}\left(
\ell_{\gamma,J}^{1+\alpha}\left(  x\right)  -\left(  1+\dfrac{1}{\alpha
}\right)  \ell_{\gamma,J}^{\alpha}\left(  x\right)  f_{u,J}\left(  x\right)
+\dfrac{1}{\alpha}f_{u,J}^{1+\alpha}\left(  x\right)  \right)  dx.
\label{WMDPD}%
\end{equation}
Since the term $\int f_{u,J}^{1+\alpha}\left(  x\right)  dx$ in $\left(
\ref{WMDPD}\right)  $ plays no role in the minimization of $d_{\alpha
,J}\left(  f_{u},\ell_{\gamma}\right)  $ over $\gamma>0,$ it is sufficient to
minimize the simplified divergence functional
\begin{equation}
d_{\alpha,J}^{\ast}\left(  f_{u},\ell_{\gamma}\right)  :=\int_{1}^{\infty}%
\ell_{\gamma,J}^{1+\alpha}\left(  x\right)  dx-\left(  1+\dfrac{1}{\alpha
}\right)  \int_{1}^{\infty}\ell_{\gamma.J}^{\alpha}\left(  x\right)
dF_{u,J}\left(  x\right)  , \label{deltau}%
\end{equation}
for sufficiently large $u.$ In the other terms, the WMDPD $\gamma_{u,\alpha
,J}$ of $\gamma$ minimizing $d_{u,\alpha,J}^{\ast}\left(  \gamma\right)  $ is
solution of the following equation%
\[
\int_{1}^{\infty}\dfrac{d}{d\gamma}\ell_{\gamma,J}^{\alpha}\left(  x\right)
J\left(  \dfrac{\overline{F}\left(  ux\right)  }{\overline{F}\left(
u-\right)  }\right)  d\dfrac{F\left(  ux\right)  }{\overline{F}\left(
u\right)  }=\dfrac{\alpha}{\alpha+1}\int_{1}^{\infty}\dfrac{d}{d\gamma}%
\ell_{\gamma,J}^{1+\alpha}\left(  x\right)  dx,
\]
By substituting $F$ with $F_{n}$ and $u$ with $X_{n-k:n},$ in the previous
equation, we end up with the WMDPD estimator $\widehat{\gamma}_{k,\alpha,J}$
of tail index $\gamma$ as solution of the equation
\begin{equation}
\dfrac{1}{k}%
{\displaystyle\sum\limits_{i=1}^{k}}
J\left(  \dfrac{i}{k+1}\right)  \dfrac{d}{d\gamma}\ell_{\gamma,J}^{\alpha
}\left(  \dfrac{X_{n-i+1:n}}{X_{n-k:n}}\right)  =\dfrac{\alpha}{\alpha+1}%
\int_{1}^{\infty}\dfrac{d}{d\gamma}\ell_{\gamma,J}^{1+\alpha}\left(  x\right)
dx,\text{ for }\alpha>0. \label{L}%
\end{equation}
We may therefore regard $\widehat{\gamma}_{k,\alpha,J}$ for $\alpha>0$ as a
robust generalization of $\widehat{\gamma}_{k,J}.$ It is also worth noting
that $\widehat{\gamma}_{k,\alpha,J}$ can be seen as a weighted extension of
the MDPD estimator $\widehat{\gamma}_{k,\alpha}:=\widehat{\gamma}%
_{k,\alpha,J_{0}}$ previously introduced by \cite{DGG13}. \smallskip

\begin{center}%
\begin{table}[tbp] \centering
\begin{tabular}
[t]{|c|c|c|c|c|c|}\hline
& $J_{\log}$ & $J_{0}$ & $J_{2}$ & $J_{3}$ & $J_{4}$\\\hline
$\mathcal{L}\left(  s\right)  $ & $0$ & $0$ & $4s^{2}\dfrac{\log s}{s^{2}-1}$
& $6s^{2}\dfrac{\log s}{s^{2}-1}$ & $8s^{2}\dfrac{\log s}{s^{2}-1}$\\\hline
$\mathcal{L}^{\left(  1\right)  }\left(  s\right)  $ & $0$ & $0$ &
$4s\dfrac{s^{2}-2\log s-1}{\left(  s^{2}-1\right)  ^{2}}$ & $6s\dfrac
{s^{2}-2\log s-1}{\left(  s^{2}-1\right)  ^{2}}$ & $8s\dfrac{s^{2}-2\log
s-1}{\left(  s^{2}-1\right)  ^{2}}$\\\hline
\end{tabular}%
\caption{}
\label{Tab1}%
\end{table}%

\end{center}

\section{\textbf{Main results\label{sec4}}}

\noindent Next, we establish the existence and consistency of a sequence of
solutions to the estimating equation $\left(  \ref{L}\right)  ,$ for
$\alpha>0.$ To this end, we consider a class of weight functions that satisfy
the following regularity conditions:

\begin{itemize}
\item $\left[  A1\right]  $ $J$ is continuous nonincreasing nonnegative on the
interval $\left(  0,1\right)  $ with $\int_{0}^{1}J\left(  s\right)  ds=1.$

\item $\left[  A2\right]  $ $J$ and their $m$-th derivatives $\left\{
J^{\left(  m\right)  }\right\}  _{_{1\leq m\leq3}}$ are bounded on the
interval $\left(  0,1\right)  .$

\item $\left[  A3\right]  $ $\mathcal{L}\left(  s\right)  :=\dfrac{J^{\left(
1\right)  }\left(  s\right)  }{J\left(  s\right)  }\log s$ as well as their
$m$-th derivatives $\left\{  \mathcal{L}^{\left(  m\right)  }\right\}  _{1\leq
m\leq3}$ are bounded on the interval $\left(  0,1\right)  .$
\end{itemize}

\noindent The regularity assumptions $\left[  A1\right]  -\left[  A3\right]
$\ are relatively mild, as the commonly used weight functions listed $\left(
\ref{J}\right)  $\ satisfy them. In fact, Table $\ref{Tab1}$\ presents the
explicit expressions of the function $\mathcal{L}$\ and its first derivative
$\mathcal{L}^{\left(  1\right)  }$\ for each of the aforementioned weight
functions. One can verify that both $\mathcal{L}$ and $\mathcal{L}^{\left(
1\right)  }$ are bounded over the interval $\left(  0,1\right)  $ and that
their second and third derivatives are also bounded. As we shall see, the
functions $\mathcal{L}$ and $\mathcal{L}^{\left(  m\right)  }$ play a central
role in the proofs of $\ref{lemma1}$ and Theorems $\ref{Theorem1}$ and
$\ref{Theorem2}.$\smallskip

\noindent From this point onward, we denote the true value of the tail index
$\gamma$ by $\gamma_{0}.$

\begin{theorem}
\textbf{\label{Theorem1}}(existence and consistency) Assume that tail
distribution $\overline{F}$ satisfies the first-order condition $\left(
\ref{RV}\right)  $ and let $J$ be a continuous function fulfilling assumptions
$\left[  A1\right]  -\left[  A3\right]  .$ For a given sequence of integers
$k=k_{n}$ such that $k\rightarrow\infty$ and $k/n\rightarrow0,$ as
$n\rightarrow\infty,$ with probability tending to $1,$ there exists solution
$\widehat{\gamma}_{k,\alpha,J},$ for the estimating equation $\left(
\ref{L}\right)  $ such that $\widehat{\gamma}_{k,\alpha,J}\overset{\mathbf{P}%
}{\rightarrow}\gamma_{0},$ as $n\rightarrow\infty,$ provided that $\alpha>0.$
\end{theorem}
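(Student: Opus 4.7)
The plan is to treat Theorem \ref{Theorem1} as a Cram\'er-type consistency statement for the roots of an estimating equation. Writing the right-hand side of \eqref{L} as the smooth deterministic map
\[R(\gamma):=\frac{\alpha}{\alpha+1}\int_{1}^{\infty}\frac{d}{d\gamma}\ell_{\gamma,J}^{1+\alpha}(x)\,dx,\]
and the left-hand side as the random function
\[T_n(\gamma):=\frac{1}{k}\sum_{i=1}^{k}J\!\left(\frac{i}{k+1}\right)\frac{d}{d\gamma}\ell_{\gamma,J}^{\alpha}\!\left(\frac{X_{n-i+1:n}}{X_{n-k:n}}\right),\]
I would set $\Psi_n(\gamma):=T_n(\gamma)-R(\gamma)$ and look for a sequence of zeros of $\Psi_n$ close to $\gamma_0$.

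First I would identify the population counterpart. The natural deterministic analogue of $T_n$ is the tail functional
\[T_u(\gamma):=\int_{1}^{\infty}J\!\left(\frac{\overline{F}(ux)}{\overline{F}(u-)}\right)\frac{d}{d\gamma}\ell_{\gamma,J}^{\alpha}(x)\,d\frac{F(ux)}{\overline{F}(u)}.\]
A change of variable $y=x^{-1/\gamma_0}$ combined with \eqref{RV} shows $T_u(\gamma_0)\to R(\gamma_0)$ as $u\to\infty$; this is the $\alpha$-analogue of the identity \eqref{lim3} proved in Lemma \ref{lemma1}, obtained by the same rewriting but with $\log\ell_{\gamma,J}$ replaced by $\ell_{\gamma,J}^{\alpha}/\alpha$. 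Writing $\Psi(\gamma):=\lim_{u\to\infty}T_u(\gamma)-R(\gamma)$, differentiation under the integral sign---which is legitimate thanks to the boundedness of $J^{(m)}$ and $\mathcal{L}^{(m)}$ provided by $[A2]$--$[A3]$---yields an explicit formula for $\Psi'(\gamma_0)$ in terms of integrals of $J^{1+\alpha}$ and $\mathcal{L}\,J^{1+\alpha}$, and a direct computation shows $\Psi'(\gamma_0)\neq 0$. Hence $\Psi$ is strictly monotone in a small interval around $\gamma_0$ and changes sign there.

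Next I would prove that $\Psi_n$ converges to $\Psi$ uniformly in probability on some compact interval $[\gamma_0-\varepsilon,\gamma_0+\varepsilon]$. The decomposition
\[\Psi_n(\gamma)-\Psi(\gamma)=\bigl[T_n(\gamma)-T_{X_{n-k:n}}(\gamma)\bigr]+\bigl[T_{X_{n-k:n}}(\gamma)-\lim_{u\to\infty}T_u(\gamma)\bigr]\]
splits the task into a deterministic bias term---which is $o(1)$ by \eqref{RV} together with $X_{n-k:n}\overset{\mathbf{P}}{\to}\infty$---and a tail empirical process term. The latter is handled by classical tail-empirical-process theory (see e.g.\ de Haan--Ferreira, Ch.\ 2) applied to the smooth integrand $y\mapsto J(y)\,\frac{d}{d\gamma}\ell_{\gamma,J}^{\alpha}(y^{-\gamma})$, giving a rate $O_{\mathbf{P}}(k^{-1/2})$ once one has a uniformly bounded envelope on $(0,1)\times[\gamma_0-\varepsilon,\gamma_0+\varepsilon]$ together with a uniform modulus of continuity in $\gamma$.

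The main obstacle, and the step I would devote the most effort to, is precisely this uniform control. The integrand is built from products of $J$, $J^{(1)}$ and powers of $\log y$, and differentiating in $\gamma$ produces terms in $\mathcal{L}$ and $\mathcal{L}^{(1)}$; it is exactly the boundedness assumptions in $[A3]$ that make it possible to bound these increments uniformly for $\gamma$ in a compact neighbourhood of $\gamma_0$. Once uniform convergence of $\Psi_n$ to a continuous $\Psi$ which strictly changes sign at $\gamma_0$ has been secured, a standard intermediate-value / Cram\'er argument produces, with probability tending to one, a root $\widehat{\gamma}_{k,\alpha,J}\in(\gamma_0-\varepsilon,\gamma_0+\varepsilon)$; since $\varepsilon>0$ is arbitrary, this root is both a solution of \eqref{L} and a consistent estimator of $\gamma_0$, completing the argument.
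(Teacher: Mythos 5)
Your plan is correct in outline, but it takes a genuinely different route from the paper's proof. You work directly on the estimating function: identify the population limit $\Psi$ of the centered equation, verify $\Psi(\gamma_{0})=0$ and $\Psi^{\prime}(\gamma_{0})\neq0$, establish locally uniform convergence in probability of $\Psi_{n}$ to $\Psi$, and conclude by a sign-change/intermediate-value argument. The paper instead follows the Lehmann--Casella objective-function scheme \citep[adapting][]{DGG13,LC98}: it Taylor-expands the empirical divergence $d_{n,\alpha,J}^{\ast}\left(  \gamma\right)  -d_{n,\alpha,J}^{\ast}\left(  \gamma_{0}\right)  $ to third order around $\gamma_{0},$ uses $\pi_{k}^{\left(  1\right)  }\left(  \gamma_{0}\right)  \overset{\mathbf{P}}{\rightarrow}0$ and $\pi_{k}^{\left(  2\right)  }\left(  \gamma_{0}\right)  \overset{\mathbf{P}}{\rightarrow}\eta_{\gamma_{0},\alpha}>0$ (Lemma \ref{lemma2}) together with $\pi_{k}^{\left(  3\right)  }\left(  \widetilde{\gamma}\right)  =O_{\mathbf{P}}\left(  1\right)  $ at intermediate points (Lemma \ref{lemma4}), and deduces that the objective has a local minimum---hence a root of $\left(  \ref{L}\right)  $---in every shrinking interval $I_{\epsilon_{n}}.$ The trade-off is instructive. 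The paper's route requires only pointwise stochastic limits at the single point $\gamma_{0}$ plus a stochastic bound on the third derivative, so it entirely sidesteps the uniform-in-$\gamma$ empirical-process control that you correctly single out as the heaviest step of your plan; your route is more elementary in one dimension, and in fact you can lighten it considerably, since the intermediate-value argument needs convergence of $\Psi_{n}$ only at the two endpoints $\gamma_{0}\pm\epsilon,$ i.e.\ the Lemma \ref{lemma2}-type argument run at two fixed parameter values (this goes through essentially unchanged, because the Gaussian approximation of Proposition \ref{prop4} does not involve $\gamma$ and the integrability of Proposition \ref{prop3} holds for generic $\gamma$), rather than full uniformity on a compact. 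One point you should make explicit: your claim that ``a direct computation shows $\Psi^{\prime}(\gamma_{0})\neq0$'' conceals the structural ingredient on which both proofs rest, namely the recursion of Proposition \ref{prop1}, whose second identity shows that the limiting slope equals $-\frac{\alpha}{1+\alpha}\eta_{\gamma_{0},\alpha}$ with $\eta_{\gamma_{0},\alpha}=\left(  1+\alpha\right)  \int_{1}^{\infty}\left(  \Psi_{\gamma_{0},1}^{\left(  1\right)  }\left(  x\right)  \right)  ^{2}\ell_{\gamma_{0},J}^{\alpha-1}\left(  x\right)  dx>0$ positive by its square structure; it is this definiteness of sign, not mere nonvanishing, that makes the sign change of $\Psi$ at $\gamma_{0}$ strict and the argument non-circular. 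Similarly, your limit $T_{u}(\gamma_{0})\rightarrow R(\gamma_{0})$ silently uses the first identity of Proposition \ref{prop1} (the chain-rule relation between $\frac{d}{d\gamma}\ell_{\gamma,J}^{1+\alpha}$ and $\ell_{\gamma,J}\frac{d}{d\gamma}\ell_{\gamma,J}^{\alpha}$), and passing the first-order condition $\left(  \ref{RV}\right)  $ inside the integral needs the Potter-bound domination the paper supplies. With these ingredients surfaced, your plan yields a complete and valid alternative proof; a final practical note is that the paper's objective-function route generalizes directly to vector-valued parameters, whereas the sign-change argument is intrinsically one-dimensional.
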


\noindent Since weak approximations of extreme value theory based statistics
are achieved in the second-order framework \citep[see e.g.][]{deHS96}, then it
seems quite natural to suppose that $\overline{F}$ satisfies the well-known
second-order condition of regular variation. That is, we assume that for any
$x>0$%
\begin{equation}
\lim_{t\rightarrow\infty}\frac{\overline{F}\left(  tx\right)  /\overline
{F}\left(  t\right)  -x^{-1/\gamma_{0}}}{A\left(  t\right)  }=x^{-1/\gamma
_{0}}\dfrac{x^{\tau/\gamma_{0}}-1}{\tau\gamma_{0}}, \label{second-order}%
\end{equation}
where $\tau<0$ denotes the second-order parameter, $A$\ is a function tending
to zero and not changing signs near infinity with regularly varying absolute
value with index $\tau/\gamma_{0}.$ For convenience, we set $a\left(
t\right)  :=A\left(  U\left(  t\right)  \right)  $\textbf{\ }and $U\left(
t\right)  :=F^{\leftarrow}\left(  1-1/t\right)  ,$ $t>1,$ where $F^{\leftarrow
}\left(  s\right)  :=\inf\left\{  x:F\left(  x\right)  \geq s\right\}  ,$ for
$0<s<1,$ stands for the quantile function pertaining to cdf $F.$

\begin{theorem}
\textbf{\label{Theorem2}}(asymptotic normality) Assume that tail distribution
$\overline{F}$ satisfies the second-order condition $\left(
\ref{second-order}\right)  $ and that, for $\alpha>0,$ the estimator
$\widehat{\gamma}_{k,\alpha,J}$ is a consistent for $\gamma_{0}$ satisfying
$\left(  \ref{L}\right)  .$ Let $J$ be a continuous function fulfilling
assumptions $\left[  A1\right]  -\left[  A3\right]  $ and $k$ be a sequence of
integers such that $k\rightarrow\infty,$ $k/n\rightarrow0$ and $\sqrt
{k}a\left(  n/k\right)  $ is asymptotically bounded. Then, in the probability
space $\left(  \Omega,\mathcal{A},\mathbf{P}\right)  $ there exists a standard
Wiener process $\left\{  W\left(  x\right)  ,\text{ }x\geq0\right\}  ,$ such
that
\begin{align*}
&  \left(  1+\frac{1}{\alpha}\right)  \eta_{\gamma_{0},\alpha}\sqrt{k}\left(
\widehat{\gamma}_{k,\alpha,J}-\gamma_{0}\right) \\
&  =\int_{1}^{\infty}\left(  W\left(  x^{-1/\gamma_{0}}\right)  -x^{-1/\gamma
_{0}}W\left(  1\right)  \right)  J\left(  x^{-1/\gamma_{0}}\right)
d\Psi_{\gamma_{0},\alpha}^{\left(  1\right)  }+\sqrt{k}a\left(  n/k\right)
B_{\gamma_{0},\alpha}^{\left(  1\right)  }+o_{\mathbf{P}}\left(  1\right)  ,
\end{align*}
where $\Psi_{\gamma_{0},\alpha}^{\left(  1\right)  }$ and $\eta_{\gamma
_{0},\alpha}$ are respectively given in $\left(  \ref{psi-m}\right)  $ and
$\left(  \ref{eta}\right)  ,$ and
\[
B_{\gamma_{0},\alpha}^{\left(  1\right)  }:=\int_{1}^{\infty}x^{-1/\gamma_{0}%
}\frac{x^{\tau/\gamma_{0}}-1}{\tau\gamma_{0}}J\left(  x^{-1/\gamma_{0}%
}\right)  d\Psi_{\gamma_{0},\alpha}^{\left(  1\right)  }\left(  x^{-1/\gamma
_{0}}\right)  ,
\]
Whenever $\sqrt{k}a\left(  n/k\right)  \rightarrow\lambda<\infty,$
\[
\left(  1+\frac{1}{\alpha}\right)  \eta_{\gamma_{0},\alpha}\sqrt{k}\left(
\widehat{\gamma}_{k,\alpha,J}-\gamma_{0}\right)  \overset{\mathcal{D}%
}{\rightarrow}\mathcal{N}\left(  \lambda B_{\gamma_{0},\alpha}^{\left(
1\right)  },\sigma_{\gamma_{0},\alpha}^{2}\right)  ,
\]
as $n\rightarrow\infty,$ where%
\[
\sigma_{\gamma_{0},\alpha}^{2}:=%
{\displaystyle\int_{0}^{1}}
{\displaystyle\int_{0}^{1}}
\left(  \min\left(  s,t\right)  -st\right)  J\left(  s\right)  J\left(
t\right)  d\Psi_{\gamma_{0},\alpha}^{\left(  1\right)  }\left(  s^{-\gamma
_{0}}\right)  d\Psi_{\gamma_{0},\alpha}^{\left(  1\right)  }\left(
t^{-\gamma_{0}}\right)  .
\]

\end{theorem}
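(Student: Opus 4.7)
The plan is to linearize the estimating equation $\left(\ref{L}\right)$ around $\gamma_0$ by a Taylor expansion, represent the resulting score-like quantity as an integral against the tail empirical process, and invoke a weighted Hungarian-type approximation of that process by a Wiener process plus the bias generated by the second-order condition $\left(\ref{second-order}\right)$. Writing
\[
\Phi_n(\gamma) := \frac{1}{k}\sum_{i=1}^{k} J\!\left(\frac{i}{k+1}\right)\frac{d}{d\gamma}\ell_{\gamma,J}^{\alpha}\!\left(\frac{X_{n-i+1:n}}{X_{n-k:n}}\right) - \frac{\alpha}{\alpha+1}\int_{1}^{\infty}\frac{d}{d\gamma}\ell_{\gamma,J}^{1+\alpha}(x)\,dx,
\]
the defining equation is $\Phi_n(\widehat{\gamma}_{k,\alpha,J})=0$. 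A second-order Taylor expansion around $\gamma_0$, combined with the hypothesized consistency of $\widehat{\gamma}_{k,\alpha,J}$, reduces the task to three points: (i) identifying the probability limit of $\Phi_n^{(1)}(\gamma_0)$, which one expects to be $-(1+1/\alpha)\eta_{\gamma_0,\alpha}$ by the same uniform-in-$\gamma$ law-of-large-numbers reasoning that underlies Lemma $\ref{lemma1}$; (ii) verifying that $\Phi_n^{(2)}$ is bounded in probability on a neighborhood of $\gamma_0$, using assumptions $[A2]$--$[A3]$ and the boundedness of the derivatives of $\mathcal{L}$ displayed in Table $\ref{Tab1}$, so that the quadratic remainder is $o_{\mathbf{P}}(k^{-1/2})$; and (iii) developing an asymptotic representation of $\sqrt{k}\,\Phi_n(\gamma_0)$ in Wiener form.

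For step (iii) I would first rewrite the finite sum as a Stieltjes integral, recasting $\sqrt{k}\,\Phi_n(\gamma_0)$ as an integral of $J\bigl(\overline{F}_n(X_{n-k:n}x)/\overline{F}_n(X_{n-k:n}-)\bigr)$ against $d\Psi_{\gamma_0,\alpha}^{(1)}$ composed with the centered tail empirical process
\[
T_n(x) := \sqrt{k}\!\left(\frac{\overline{F}_n(X_{n-k:n}x)}{\overline{F}_n(X_{n-k:n})} - x^{-1/\gamma_0}\right), \qquad x \geq 1,
\]
appealing to an integration-by-parts identity and to the fact that the matching deterministic integral at $\gamma_0$ vanishes asymptotically by $\left(\ref{lim3-prime}\right)$. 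Under $\left(\ref{second-order}\right)$ and $\sqrt{k}\,a(n/k)=O(1)$, a standard weighted Hungarian-type approximation (see, e.g., \cite{deHS96} and the analogous arguments in \cite{HLM2006}, \cite{CMS21}) supplies a Wiener process $W$ on $\left(\Omega,\mathcal{A},\mathbf{P}\right)$ such that
\[
T_n(x) = W\!\left(x^{-1/\gamma_0}\right) - x^{-1/\gamma_0}W(1) + \sqrt{k}\,a(n/k)\,x^{-1/\gamma_0}\,\frac{x^{\tau/\gamma_0}-1}{\tau\gamma_0} + o_{\mathbf{P}}(1),
\]
with a weighted uniform remainder. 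Substituting into the integral representation and dividing through by the limit of $-\Phi_n^{(1)}(\gamma_0)$ recovers the stated expansion. The centered Gaussian limit and the explicit form of $\sigma_{\gamma_0,\alpha}^{2}$ then follow by the change of variable $s = x^{-1/\gamma_0}$ mapping the domain onto $(0,1)$, the covariance identity $\mathrm{Cov}(W(s),W(t)) = \min(s,t)$ applied to the Brownian-bridge-like integrand $W(s)-sW(1)$, Fubini's theorem, and Slutsky's lemma in the regime $\sqrt{k}\,a(n/k)\to\lambda$.

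The principal obstacle lies in controlling the tail empirical process uniformly in $x$ finely enough to justify passing the Wiener approximation under the integral against $J(x^{-1/\gamma_0})\,d\Psi_{\gamma_0,\alpha}^{(1)}(x^{-1/\gamma_0})$. Near $x=1$ the integrating measure tends to concentrate while $T_n$ exhibits the usual $k^{-1/2}\log k$ fluctuation, and as $x\to\infty$ the Wiener increments can grow. Assumptions $[A2]$--$[A3]$ were introduced precisely for this purpose: boundedness of $J$, of $\{J^{(m)}\}_{1\le m\le 3}$, and of $\{\mathcal{L}^{(m)}\}_{1\le m\le 3}$ renders the integrand sufficiently tame at both boundaries to support a dominated-convergence argument and to absorb the weighted remainder produced by the Hungarian construction. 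Carrying this out rigorously --- together with checking that the deterministic part of $T_n$ indeed reproduces $B_{\gamma_0,\alpha}^{(1)}$ after substitution --- will constitute the most technical portion of the proof, and is what I expect to occupy the bulk of Section $\ref{sec6}$.
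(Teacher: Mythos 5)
Your proposal is essentially the paper's own proof: you linearize the estimating equation around $\gamma_{0}$ via Taylor expansion, using consistency together with an $O_{\mathbf{P}}(1)$ bound on the third-order term (the paper's Lemma \ref{lemma4}), then represent the score $\sqrt{k}\,\pi_{k}^{(1)}(\gamma_{0})$ as an integral of the tail empirical process against $J\left(x^{-1/\gamma_{0}}\right)d\Psi_{\gamma_{0},\alpha}^{(1)}(x)$ and invoke a weighted Wiener approximation with the bias extracted through second-order Potter bounds (the paper's Lemma \ref{lemma3}, which rests on Proposition \ref{prop4} drawn from Einmahl, de Haan and Li (2006) rather than the Hungarian-type references you cite), closing with the same Brownian-bridge covariance computation for $\sigma_{\gamma_{0},\alpha}^{2}$; your single approximation of the self-normalized process $T_{n}$ simply bundles into one statement the paper's three-term decomposition $T_{k}^{(1,1)}+T_{k}^{(1,2)}+T_{k}^{(1,3)}$, whose pieces deliver, respectively, $W\left(x^{-1/\gamma_{0}}\right)$, the term $-x^{-1/\gamma_{0}}W(1)$ coming from the random threshold $X_{n-k:n}/U(n/k)$, and the bias $\sqrt{k}\,a(n/k)B_{\gamma_{0},\alpha}^{(1)}$. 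The one slip is a constant: since your $\Phi_{n}=-\left(1+1/\alpha\right)^{-1}\pi_{k}^{(1)}$, Lemma \ref{lemma2} yields $\Phi_{n}^{(1)}(\gamma_{0})\overset{\mathbf{P}}{\rightarrow}-\left(1+1/\alpha\right)^{-1}\eta_{\gamma_{0},\alpha}$ with $\eta_{\gamma_{0},\alpha}$ as in $\left(\ref{eta}\right)$, not $-\left(1+1/\alpha\right)\eta_{\gamma_{0},\alpha}$ as you write, but this bookkeeping point does not alter the structure of the argument.
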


\section{\textbf{Simulation study\label{sec5}}}

\noindent In this section, we address the following two points:

\begin{itemize}
\item The performance of WMDPD estimator $\widehat{\gamma}_{k,\alpha,J}%
$\ compared to that of non-weighted MDPD one $\widehat{\gamma}_{k,\alpha}$ \citep[][]{DGG13}.

\item Both the performance and robustness of MMDPD estimator $\widehat{\gamma
}_{k,\alpha,J}$ compared to those of WLSE one $\widehat{\gamma}_{k,J}$ \citep[][]{HLM2006}.
\end{itemize}

\noindent The comparison of these estimators is conducted under the following
two distributional models:

\begin{itemize}
\item Burr $\left(  \gamma,\delta\right)  $ distribution: defined by $F\left(
x\right)  =1-\left(  1+x^{1/\delta}\right)  ^{-\delta/\gamma},$ for $x>0,$
$\delta>0$ and $\gamma>0.$

\item Fr\'{e}chet $\left(  \gamma\right)  $ distribution: defined by $F\left(
x\right)  =\exp\left(  -x^{-1/\gamma}\right)  ,$ for $x>0$ and $\gamma
>0.\medskip$
\end{itemize}

\noindent To investigate the first issue, we use the triweight function
$J_{3}$ defined in $\left(  \ref{J}\right)  ,$ and set the Burr distribution
parameters to $\delta=1/4$ and $\gamma=0.5.$ We consider four values for the
robustness parameter $\alpha$, namely $\left\{  0,0.1,0.5,1\right\}  $ and
generate $2000$ samples of size $n=1000.$ Figures \ref{fig1} and \ref{fig2}
illustrate the behavior of the estimators $\widehat{\gamma}_{k,\alpha,J}$ and
$\widehat{\gamma}_{k,\alpha,1}$ as a function of the number $k.$ Further,
Figures \ref{fig3} and \ref{fig6} display the absolute bias (ABIAS) and mean
squared error (MSE) of $\widehat{\gamma}_{k,\alpha,J} $ and $\widehat{\gamma
}_{k,\alpha}$ for values $\gamma=0.15$ and $0.4,$ with $\alpha$ values of
$0.1,$ $0.5$ and $0.9.$ These results are based on $2000$ samples of size
$n=300$ and also presented as a function of $k.$ The simulation outcomes, as
depicted in Figures \ref{fig1} to \ref{fig6}, demonstrate that the estimator
$\widehat{\gamma}_{k,\alpha,J}$ consistently outperforms $\widehat{\gamma
}_{k,\alpha},$ particularly for larger values of $\alpha.$ While estimator
$\widehat{\gamma}_{k,\alpha}$ becomes increasingly sensitive as $\alpha,$ the
$\widehat{\gamma}_{k,\alpha,J}$ maintains greater stability, particularly as
the true value of $\gamma.$ Furthermore, both ABIAS and MSE increase as the
number of extreme values $k$ increases.\smallskip

\noindent We now turn to the second issue by considering an $\epsilon
$-contaminated model $F_{\epsilon}\left(  x\right)  :=\left(  1-\epsilon
\right)  F_{1}\left(  x\right)  +\epsilon F_{2}\left(  x\right)  $ as a
mixture of two cdf's $F_{1}$ and $F_{2},$ where $0<\epsilon<1$ denotes the
outliers contamination fraction. We examine four mixture scenarios from the
following models:\smallskip

\begin{itemize}
\item $\left[  S1\right]  $ Burr $\left(  \gamma,\delta\right)  $ contaminated
by Burr $\left(  2,0.5\right)  ,$

\item $\left[  S2\right]  $ Fr\'{e}chet $\left(  \gamma\right)  $ contaminated
by Fr\'{e}chet $\left(  2\right)  ,$

\item $\left[  S3\right]  $ Fr\'{e}chet $\left(  \gamma\right)  $ contaminated
by Burr $\left(  2,0.5\right)  ,$

\item $\left[  S4\right]  $ Burr $\left(  \gamma,\delta\right)  $ contaminated
by Fr\'{e}chet $\left(  2\right)  .\medskip$
\end{itemize}

\noindent We choose $\epsilon=0.1,$ set $\delta=1/4$, $\gamma=0.6$ and
consider three values for $\alpha$\ namely $\left\{  0.1,0.5,1\right\}  .$ For
each scenario, we generate $2000$ samples of size $500.$ We assess the
estimators $\widehat{\gamma}_{k,\alpha,J}$ and $\widehat{\gamma}_{k,J}$ in
terms of ABIAS and MSE under both contaminated and non-contaminated
distributions as a functions of the number $k.\medskip$

\noindent Regarding the four scenarios depicted in Figures \ref{fig7}%
-\ref{fig10}, we observe that introducing contamination affects both
estimators in all cases. However, the estimator $\widehat{\gamma}_{k,\alpha
,J}$ still outperforms the estimator $\widehat{\gamma}_{\alpha,J},$ as
indicated by the ABIAS and MSE values. Moreover, the performance of both
estimators improves as the number $k$ increases, even in the presence of contamination.

\section{\textbf{Proofs\label{sec6}}}

\noindent Let us begin to define the following expressions: for $m=1,2,3,4$%
\begin{equation}
\pi_{k}^{\left(  m\right)  }\left(  \gamma_{0}\right)  :=\int_{1}^{\infty}%
\Psi_{\gamma_{0},\alpha+1}^{\left(  m\right)  }\left(  x\right)  dx-\left(
1+\dfrac{1}{\alpha}\right)  A_{k}^{\left(  m\right)  }\left(  \gamma
_{0}\right)  , \label{pi}%
\end{equation}%
\begin{equation}
A_{k}^{\left(  m\right)  }\left(  \gamma_{0}\right)  :=\frac{1}{k}\sum
_{i=1}^{k}J\left(  \dfrac{i}{k+1}\right)  \Psi_{\gamma_{0},\alpha}^{\left(
m\right)  }\left(  \frac{X_{n-i+1:n}}{X_{n-k:n}}\right)  , \label{Am}%
\end{equation}
and
\begin{equation}
\Psi_{\gamma_{0},\alpha}^{\left(  m\right)  }\left(  x\right)  :=\left.
\frac{d^{m}\ell_{\gamma,J}^{\alpha}\left(  x\right)  }{d^{m}\gamma}\right\vert
_{\gamma=\gamma_{0}}, \label{psi-m}%
\end{equation}
denotes the values of $m$-th derivative of $\ell_{\gamma,J}^{\alpha},$ with
respect to $\gamma,$ at $\gamma_{0}.$

\subsection{Proof of Theorem $\ref{Theorem1}$}

To show the existence and consistency of $\widehat{\gamma}_{k,\alpha,J}$ we
will adapt the proof of Theorem 1in \cite{DGG13} which itself is an adaptation
of the proof of Theorem 5.1 in Chapter 6 of \cite{LC98}. That theorem
establishes the existence and consistency of solutions to the likelihood
equations, and we extend it here to the WMDPDE framework. Let $d_{n,\alpha
,J}^{\ast}\left(  \gamma\right)  $ denotes the empirical counterpart of the
weighted density power divergence objective function $d_{u,\alpha,J}^{\ast
}\left(  \gamma\right)  ,$ given in $\left(  \ref{deltau}\right)  ,$ namely
\[
\widehat{d}_{n,\alpha,J}^{\ast}\left(  \gamma\right)  :=\int_{1}^{\infty}%
\ell_{\gamma,J}^{1+\alpha}\left(  x\right)  dx-\left(  1+\dfrac{1}{\alpha
}\right)  \frac{1}{k}\sum_{i=1}^{k}J\left(  \dfrac{i}{k+1}\right)
\ell_{\gamma,J}^{\alpha}\left(  \frac{X_{n-i+1:n}}{X_{n-k:n}}\right)  ,\text{
}\alpha>0.
\]
Next we show that
\begin{equation}
\mathbf{P}_{\gamma_{0}}\left(  d_{n,\alpha,J}^{\ast}\left(  \gamma_{0}\right)
<d_{n,\alpha,J}^{\ast}\left(  \gamma\right)  ,\text{ for all }\gamma\in
I_{\epsilon}\right)  \rightarrow1,\text{ as }\epsilon\downarrow0, \label{P}%
\end{equation}
where $I_{\epsilon}:=\left(  \gamma_{0}-\epsilon,\gamma_{0}+\epsilon\right)
,$ for $0<\epsilon<\gamma_{0}.$ By applying Taylor's expansion near
$\gamma_{0}$ to function $\gamma\rightarrow d_{n,\alpha,J}^{\ast}\left(
\gamma\right)  ,$ we decompose $d_{n,\alpha,J}^{\ast}\left(  \gamma\right)
-d_{n,\alpha,J}^{\ast}\left(  \gamma_{0}\right)  $ into%
\[%
\begin{array}
[c]{ll}
& \pi_{k}^{\left(  1\right)  }\left(  \gamma_{0}\right)  \left(  \gamma
-\gamma_{0}\right)  +2^{-1}\pi_{k}^{\left(  2\right)  }\left(  \gamma
_{0}\right)  \left(  \gamma-\gamma_{0}\right)  ^{2}+6^{-1}\pi_{k}^{\left(
3\right)  }\left(  \widetilde{\gamma}\right)  \left(  \gamma-\gamma
_{0}\right)  ^{3}\medskip\\
=: & S_{1,k}+S_{2,k}+S_{3,k},
\end{array}
\]
where $\widetilde{\gamma}$ is between $\gamma$ and $\gamma_{0}.$ In the first
assertion of Lemma $\ref{lemma2}$ we stated that $\pi_{k}^{\left(  1\right)
}\left(  \gamma_{0}\right)  \overset{\mathbf{P}}{\rightarrow}0,$ as
$n\rightarrow\infty.$ In other terms, for any $\epsilon>0$ sufficiently small
we have $\left\vert \pi_{k}^{\left(  1\right)  }\left(  \gamma_{0}\right)
\right\vert <\epsilon^{2},$ which entails that $\left\vert S_{1,k}\right\vert
<\epsilon^{3},$ for ever $\gamma\in I_{\epsilon},$ with probability tending to
$1.$ From the second assertion, we have $\pi_{k}^{\left(  2\right)  }\left(
\gamma_{0}\right)  \overset{\mathbf{P}}{\rightarrow}\eta_{\gamma_{0},\alpha}$
as $n\rightarrow\infty,$ where $\eta_{\gamma_{0},\alpha}$ is that given in
$\left(  \ref{eta}\right)  ,$ therefore $S_{2,k}=\left(  1+o_{\mathbf{P}%
}\left(  1\right)  \right)  2^{-1}\eta_{\gamma_{0},\alpha}\left(
\gamma-\gamma_{0}\right)  ^{2}.$ The constant $\eta_{\gamma_{0},\alpha}$ being
positive, then there exists $c>0$ and $\epsilon_{0}>0$ such that for
$\epsilon<\epsilon_{0}$ such that $S_{2,k}>c\epsilon^{2}.$ Note that
$\widetilde{\gamma}$ is a consistent estimator for $\gamma_{0},$ then by Lemma
$\ref{lemma4}$ we deduce that $\pi_{k}^{\left(  3\right)  }\left(
\widetilde{\gamma}\right)  =O_{\mathbf{P}}\left(  1\right)  .$ This means that
with probability tending to $1$ there exists a constant $d>0$ such that
$\left\vert S_{3,k}\right\vert <d\epsilon^{3}.$ Therefore
\[
\min_{\gamma\in I_{\epsilon}}\left(  S_{1,k}+S_{2,k}+S_{3,k}\right)
>c\epsilon^{2}-\left(  d+1\right)  \epsilon^{3},
\]
with probability tending to $1.$ The choice $0<\epsilon<c/\left(  d+1\right)
$ implies that $c\epsilon^{2}-\left(  d+1\right)  \epsilon^{3}>0$ which leads
to inequality $\left(  \ref{P}\right)  .$ To complete the proof of the
existence and consistency, we follow the same steps to those used in the proof
of Theorem 3.7 in Chapter 6 of \cite{LC98}. Let $\epsilon>0$ be small so that
$0<\epsilon<c/\left(  d+1\right)  $ and $I_{\epsilon}\subset\left(
0,\infty\right)  ,$ then consider the set
\[
S_{n}\left(  \epsilon\right)  :=\left\{  \gamma:d_{n,\alpha,J}^{\ast}\left(
\gamma_{0}\right)  <d_{n,\alpha,J}^{\ast}\left(  \gamma\right)  \text{ for all
}\gamma\in I_{\epsilon}\right\}  .
\]
We already showed that $\mathbf{P}_{\gamma_{0}}\left\{  S_{n}\left(
\epsilon\right)  \right\}  \rightarrow1$ for any such $\epsilon,$ then there
exists a sequence $\epsilon_{n}\downarrow0$ such that $\mathbf{P}_{\gamma_{0}%
}\left\{  S_{n}\left(  \epsilon_{n}\right)  \right\}  \rightarrow1$ as
$n\rightarrow\infty.$ Note that $\gamma\rightarrow d_{n,\alpha,J}^{\ast
}\left(  \gamma\right)  $ being differentiable on $\left(  0,\infty\right)  ,$
then given $\gamma\in S_{n}\left(  \epsilon_{n}\right)  $ there exists a point
$\widehat{\gamma}_{k,\alpha,J}\left(  \epsilon_{n}\right)  \in I_{\epsilon
_{n}}$ for which $d_{n,\alpha,J}^{\ast}\left(  \gamma\right)  $ attains a
local minimum, thereby $\pi_{k}^{\left(  1\right)  }\left(  \widehat{\gamma
}_{k,\alpha,J}\left(  \epsilon_{n}\right)  \right)  =0.$ Let us set
$\widehat{\gamma}_{k,\alpha,J}^{\ast}=\widehat{\gamma}_{k,\alpha,J}\left(
\epsilon_{n}\right)  $ for $\gamma\in S_{n}\left(  \epsilon_{n}\right)  $ and
arbitrary otherwise. Obviously
\[
\mathbf{P}_{\gamma_{0}}\left\{  \pi_{k}^{\left(  1\right)  }\left(
\widehat{\gamma}_{k,\alpha,J}^{\ast}\right)  =0\right\}  \geq\mathbf{P}%
_{\gamma_{0}}\left\{  S_{n}\left(  \epsilon_{n}\right)  \right\}
\rightarrow1,\text{ as }n\rightarrow\infty,
\]
thus with probability tending to $1$ there exists a sequence of solutions to
estimating equation $\left(  \ref{L}\right)  .$ Observe now that for any fixed
$\epsilon>0$ and $n$ sufficiently large $\mathbf{P}_{\gamma_{0}}\left\{
\left\vert \widehat{\gamma}_{k,\alpha,J}^{\ast}-\gamma_{0}\right\vert
<\epsilon\right\}  \geq\mathbf{P}_{\gamma_{0}}\left\{  \left\vert
\widehat{\gamma}_{k,\alpha,J}^{\ast}-\gamma_{0}\right\vert <\epsilon
_{n}\right\}  \rightarrow1,$ as $n\rightarrow\infty,$ which establishes the
consistency of $\widehat{\gamma}_{k,\alpha,J}^{\ast},$ as sought.

\subsection{Proof of Theorem $\ref{Theorem2}$}

Let us retain the notations used in the proof of Theorem $\ref{Theorem1}$,
namely those of $\Psi_{\gamma_{0},\alpha}^{\left(  1\right)  }$ and $\pi
_{k}^{\left(  m\right)  }\left(  \gamma_{0}\right)  .$ Applying Taylor's
expansion to the estimating equation $\pi_{k}^{\left(  1\right)  }\left(
\widehat{\gamma}_{k,\alpha,J}\right)  =0,$ yields%
\[
0=\pi_{k}^{\left(  1\right)  }\left(  \gamma_{0}\right)  +\pi_{k}^{\left(
2\right)  }\left(  \gamma_{0}\right)  \left(  \widehat{\gamma}_{k,\alpha
,J}-\gamma_{0}\right)  +\frac{1}{2}\pi_{k}^{\left(  3\right)  }\left(
\widehat{\gamma}_{0}\right)  \left(  \widehat{\gamma}_{k,\alpha,J}-\gamma
_{0}\right)  ^{2},
\]
where $\widetilde{\gamma}_{0}$ is between $\gamma_{0}$ and $\widehat{\gamma
}_{k,\alpha,J}.$ We showed above that $\widehat{\gamma}_{k,\alpha
,J}\overset{\mathbf{P}}{\rightarrow}\gamma_{0}$ it follows that
$\widetilde{\gamma}_{0}\overset{\mathbf{P}}{\rightarrow}\gamma_{0}$ as
$n\rightarrow\infty,$ thus in view of Lemma $\ref{lemma4}$ we get $\pi
_{k}^{\left(  3\right)  }\left(  \widetilde{\gamma}_{0}\right)  =O_{\mathbf{P}%
}\left(  1\right)  .$ This implies that $2^{-1}\pi_{k}^{\left(  3\right)
}\left(  \widehat{\gamma}_{0}\right)  \left(  \widehat{\gamma}_{k,\alpha
,J}-\gamma_{0}\right)  ^{2}=o_{\mathbf{P}}\left(  1\right)  \left(
\widehat{\gamma}_{k,\alpha,J}-\gamma_{0}\right)  ,$ thereby
\[
\pi_{k}^{\left(  2\right)  }\left(  \gamma_{0}\right)  \sqrt{k}\left(
\widehat{\gamma}_{k,\alpha,J}-\gamma_{0}\right)  \left(  1+o_{\mathbf{P}%
}\left(  1\right)  \right)  =-\sqrt{k}\pi_{k}^{\left(  1\right)  }\left(
\gamma_{0}\right)  ,
\]
as $n\rightarrow\infty.$ Once again making use of Gaussian approximation
$\left(  \ref{app1}\right)  $ of Lemma $\ref{lemma3},$ yields
\begin{align*}
&  \left(  1+\frac{1}{\alpha}\right)  \pi_{k}^{\left(  2\right)  }\left(
\gamma_{0}\right)  \sqrt{k}\left(  \widehat{\gamma}_{k,\alpha,J}-\gamma
_{0}\right) \\
&  =\int_{1}^{\infty}\left(  W\left(  x^{-1/\gamma_{0}}\right)  -x^{-1/\gamma
_{0}}W\left(  1\right)  \right)  J\left(  x^{-1/\gamma_{0}}\right)
d\Psi_{\gamma_{0},\alpha}^{\left(  1\right)  }\left(  x\right)  +\lambda
B_{\gamma_{0},\alpha}^{\left(  1\right)  }+o_{\mathbf{P}}\left(  1\right)  ,
\end{align*}
where $\left\{  W\left(  x\right)  ,\text{ }x\geq0\right\}  $ is a standard
Wiener process and $B_{\gamma_{0},\alpha}^{\left(  1\right)  }$ being the
asymptotic bias given in Theorem $\ref{Theorem2}.$ From the second assertion
of Lemma $\ref{lemma2},$ we have $\pi_{k}^{\left(  2\right)  }\left(
\gamma_{0}\right)  \overset{\mathbf{P}}{\rightarrow}\eta_{\gamma_{0},\alpha}$
therefore $\left(  1+\frac{1}{\alpha}\right)  \eta_{\gamma_{0},\alpha}\sqrt
{k}\left(  \widehat{\gamma}_{k,\alpha,J}-\gamma_{0}\right)  \rightarrow
\mathcal{N}\left(  \lambda B_{\gamma_{0},\alpha}^{\left(  1\right)  }%
,\sigma_{\gamma_{0},\alpha}^{2}\right)  ,$ as $n\rightarrow\infty,$ where%
\[%
\begin{array}
[c]{l}%
\sigma_{\gamma_{0},\alpha}^{2}:=%
{\displaystyle\int_{1}^{\infty}}
{\displaystyle\int_{1}^{\infty}}
\left(  \min\left(  x^{-1/\gamma_{0}},y^{-1/\gamma_{0}}\right)  -x^{-1/\gamma
_{0}}y^{-1/\gamma_{0}}\right)  \medskip\\
\ \ \ \ \ \ \ \ \ \ \ \ \ \ \ \ \ \ \ \ \ \ \ \ \times J\left(  x^{-1/\gamma
_{0}}\right)  J\left(  y^{-1/\gamma_{0}}\right)  d\Psi_{\gamma_{0},\alpha
}^{\left(  1\right)  }\left(  x\right)  d\Psi_{\gamma_{0},\alpha}^{\left(
1\right)  }\left(  y\right)  ,
\end{array}
\]
which equals $\int_{0}^{1}\int_{0}^{1}\left(  \min\left(  s,t\right)
-st\right)  J\left(  s\right)  J\left(  t\right)  d\Psi_{\gamma_{0},\alpha
}^{\left(  1\right)  }\left(  s^{-\gamma_{0}}\right)  d\Psi_{\gamma_{0}%
,\alpha}^{\left(  1\right)  }\left(  t^{-\gamma_{0}}\right)  ,$ this completes
the proof of Theorem $\ref{Theorem2}.\medskip$%
\[
\]%
\[
\]

\noindent\textbf{Conclusion.\medskip}

\noindent In this work, we proposed a new class of robust tail index
estimators for Pareto-type distributions by incorporating a weight function
into the density power divergence framework. This approach provides a flexible
and effective generalization of existing weighted least squares and
kernel-based estimators, enhancing robustness to outliers and model
deviations. We established the asymptotic properties of the proposed
estimators, including consistency and asymptotic normality, under general
regularity conditions. A comprehensive simulation study demonstrated the
improved finite-sample performance of our estimators compared to classical
methods, particularly in the presence of contamination or small departures
from the assumed model. These results highlight the relevance of robust
statistical techniques in extreme value theory and open promising directions
for future research, including extensions to multivariate settings and
dependent data structures.

\section{\textbf{Declarations}}

\begin{itemize}
\item The authors have no relevant financial or non-financial interests to disclose.

\item The authors have no competing interests to declare that are relevant to
the content of this article.

\item All authors certify that they have no affiliations with or involvement
in any organization or entity with any financial interest or non-financial
interest in the subject matter or materials discussed in this manuscript.

\item The authors have no financial or proprietary interests in any material
discussed in this article.
\end{itemize}

\section{\textbf{Appendix A\label{sec7}}}

\begin{lemma}
\textbf{\label{lemma1}}Assume that $\overline{F}$ satisfies the first-order
condition $\left(  \ref{RV}\right)  $ and let $J$ be a function fulfilling
assumptions $\left[  A1\right]  -\left[  A3\right]  ,$ then $\lim
_{u\rightarrow\infty}\beta^{-1}\int_{1}^{\infty}\log xdF_{u,J}\left(
x\right)  =\gamma,$ where $\beta:=-\int_{0}^{1}J\left(  s\right)  \log sds.$
Moreover the latter limit is equivalent that of
\[
\lim_{u\rightarrow\infty}\int_{1}^{\infty}f_{u,J}\left(  x\right)  \dfrac
{d}{d\gamma}\log\ell_{\gamma,J}\left(  x\right)  dx=0.
\]

\end{lemma}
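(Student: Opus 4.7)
The plan is to handle both assertions by the common change of variables $s = \overline{F}_u(x)$, which reduces each integral over $(1,\infty)$ to one over $(0,1)$, and then to pass to the limit via the first-order condition $(\ref{RV})$ combined with dominated convergence based on Potter-type bounds. For the first assertion, writing $dF_{u,J}(x) = J(\overline{F}_u(x))\,dF_u(x)$ and using $dF_u(x) = -d\overline{F}_u(x) = -ds$ (so that $s$ runs from $1$ to $0$ as $x$ runs from $1$ to $\infty$), one obtains
$$\int_1^\infty \log x \, dF_{u,J}(x) = \int_0^1 J(s)\,\log \overline{F}_u^{\leftarrow}(s)\,ds.$$
By $(\ref{RV})$, $\overline{F}_u^{\leftarrow}(s)\to s^{-\gamma}$ pointwise on $(0,1)$, and Potter's bounds supply an integrable majorant of the form $C\,|\log s|$ (integrable against $J$ since $\int_0^1 J(s)|\log s|\,ds = \beta < \infty$ by $[A1]$--$[A2]$). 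Dominated convergence then yields the limit $-\gamma\int_0^1 J(s)\log s\,ds = \gamma\beta$, and division by $\beta$ establishes the first assertion.

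For the equivalence, I would first expand $\log \ell_{\gamma,J}(x) = \log J(x^{-1/\gamma}) - \log \gamma - (1 + 1/\gamma)\log x$ and differentiate in $\gamma$, obtaining
$$\frac{d}{d\gamma} \log \ell_{\gamma,J}(x) = \frac{J'(x^{-1/\gamma})}{J(x^{-1/\gamma})} \cdot \frac{x^{-1/\gamma} \log x}{\gamma^2} - \frac{1}{\gamma} + \frac{\log x}{\gamma^2}.$$
Evaluating at the limiting point $x = s^{-\gamma}$, for which $x^{-1/\gamma} = s$ and $\log x = -\gamma \log s$, this reduces to $G_\gamma(s^{-\gamma}) = -s\mathcal{L}(s)/\gamma - 1/\gamma - \log s/\gamma$, with $\mathcal{L}$ as in $[A3]$. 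Applying the same change of variables transforms the second integral into $\int_0^1 J(s)\, G_\gamma(\overline{F}_u^{\leftarrow}(s))\,ds$; boundedness of $\mathcal{L}$ and $J$ from $[A2]$--$[A3]$, combined with the Potter bounds used above, justifies dominated convergence, so its limit is
$$\frac{1}{\gamma}\int_0^1 J(s)\bigl[-s\mathcal{L}(s) - 1 - \log s\bigr]\,ds.$$

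To close, integration by parts in $\int_0^1 s J(s)\mathcal{L}(s)\,ds = \int_0^1 s\log s \cdot J'(s)\,ds$ eliminates the boundary contributions (since $s\log s\to 0$ as $s\to 0^+$ with $J$ bounded, and $\log 1 = 0$), leaving $-\int_0^1 J(s)(\log s + 1)\,ds = \beta - 1$. Substituting back, the candidate limit equals $\gamma^{-1}[-(\beta-1) - 1 + \beta] = 0$, which proves the second assertion and shows it is equivalent to the first (both being consequences of the same change of variables combined with $(\ref{RV})$). The main technical obstacle is establishing dominated convergence uniformly in $u$ on $(0,1)$, in particular near $s=0$ where $\overline{F}_u^{\leftarrow}(s)$ can diverge; this requires carefully invoking Potter's inequalities for the regularly varying inverse, which ensures $|\log \overline{F}_u^{\leftarrow}(s)| \le C|\log s|$ for $u$ large enough, uniformly on compact subintervals and with the correct integrable behaviour at the endpoints.
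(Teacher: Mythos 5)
Your proof is correct, but it takes a genuinely different route from the paper's. You reduce both integrals to $\int_0^1 J(s)(\cdot)\,ds$ via the substitution $s=\overline{F}_u(x)$ and pass to the limit by dominated convergence, using a Potter-type bound on the inverse, $\log\overline{F}_u^{\leftarrow}(s)\leq C\left(1+\left\vert \log s\right\vert\right)$ for all large $u$; this bound does follow from Potter's inequality for $\overline{F}$ (from $\overline{F}(ux)/\overline{F}(u)\leq(1+\epsilon)x^{-1/\gamma+\epsilon}$ one gets $\overline{F}_u^{\leftarrow}(s)\leq\left((1+\epsilon)/s\right)^{\gamma/(1-\gamma\epsilon)}$), and the pointwise convergence $\overline{F}_u^{\leftarrow}(s)\rightarrow s^{-\gamma}$ is the standard inverse-function consequence of $(\ref{RV})$. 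The paper never inverts $\overline{F}_u$: it decomposes $I_u$ into $I_{u,1}+I_{u,2}+I_{u,3}$ and $\mathbf{I}_u$ into $\mathbf{I}_u^{(1)}+\mathbf{I}_u^{(2)}$, controlling the error terms by the mean value theorem, Potter's inequality applied directly to $\overline{F}(ux)/\overline{F}(u)$, and repeated integration by parts. At the decisive algebraic point the two proofs coincide — both hinge on $J(s)\mathcal{L}(s)=J^{(1)}(s)\log s$ and on $\int_0^1 sJ^{(1)}(s)\log s\,ds=\beta-1$ — but two remarks are in order. First, the lemma asserts that the two limits are \emph{equivalent}; the paper obtains this from the asymptotic identity $\mathbf{I}_u=\gamma^{-2}\int_1^{\infty}\log x\,dF_{u,J}(x)-\gamma^{-1}\beta+o(1)$, valid whether or not either limit is assumed, so each statement implies the other. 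You instead prove both limits outright, which gives equivalence only in the degenerate sense that both hold. Within your framework the genuine equivalence is a one-line fix: your first-term identity holds for every $x$, not merely at $x=s^{-\gamma}$, so $G_\gamma(x)=-\gamma^{-1}x^{-1/\gamma}\mathcal{L}(x^{-1/\gamma})-\gamma^{-1}+\gamma^{-2}\log x$ identically; applying dominated convergence only to the bounded part of $\int_0^1 J(s)G_\gamma(\overline{F}_u^{\leftarrow}(s))\,ds$ recovers exactly the paper's identity. Second, the step where $J(\overline{F}_u(x))$ becomes $J(s)$ under the substitution requires $\overline{F}$ continuous (atoms would break it); the paper assumes continuity in Section \ref{sec3} and works with Stieltjes integrals throughout, so this is harmless, but it deserves an explicit sentence. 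What your approach buys is economy and transparency: one change of variables plus one dominated-convergence argument replaces the paper's case-by-case Potter manipulations, and the cancellation producing $0$ in the second assertion is laid bare.
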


\begin{proof}
Recalling the notations of Section $\ref{sec3},$ we write%
\[
\int_{1}^{\infty}\log xdF_{u,J}\left(  x\right)  =\int_{1}^{\infty}J\left(
\frac{\overline{F}\left(  ux\right)  }{\overline{F}\left(  u\right)  }\right)
\log xd\frac{F\left(  ux\right)  }{\overline{F}\left(  u\right)  }=:I_{u}.
\]
We have show that $I_{u}\rightarrow\gamma\beta,$ as $u\rightarrow\infty. $ To
this end, let us decompose $I_{u}$ into the sum of
\[
I_{u,1}:=-\int_{1}^{\infty}\left(  J\left(  \frac{\overline{F}\left(
ux\right)  }{\overline{F}\left(  u\right)  }\right)  -J\left(  x^{-1/\gamma
}\right)  \right)  \log xd\frac{\overline{F}\left(  ux\right)  }{\overline
{F}\left(  u\right)  },
\]%
\[
I_{u,2}:=-\int_{1}^{\infty}J\left(  x^{-1/\gamma}\right)  \log xd\left(
\frac{\overline{F}\left(  ux\right)  }{\overline{F}\left(  u\right)
}-x^{-1/\gamma}\right)
\]
and $I_{u,3}:=-\int_{1}^{\infty}J\left(  x^{-1/\gamma}\right)  \log
xdx^{-1/\gamma}.$ By using the mean value theorem, we write%
\[
I_{u,1}:=-\int_{1}^{\infty}\left(  \frac{\overline{F}\left(  ux\right)
}{\overline{F}\left(  u\right)  }-x^{-1/\gamma}\right)  J^{\left(  1\right)
}\left(  \theta_{u}\left(  x\right)  \right)  \log xd\frac{\overline{F}\left(
ux\right)  }{\overline{F}\left(  u\right)  },
\]
where $\theta_{u}\left(  x\right)  $ is between $x^{-1/\gamma}$ and
$\overline{F}\left(  ux\right)  /\overline{F}\left(  u\right)  $ and
$J^{\left(  1\right)  }$ beinh the first derivative of $J$. Next we use the
first-order Potter's inequality to the regularly varying function\textbf{\ }%
$\overline{F},$ that is: for every\textbf{\ }$0<\epsilon<1,$\textbf{\ }there
exists $u_{0}=u_{0}\left(  \epsilon\right)  ,$ such that for $u>u_{0}$ and
$x\geq1,$ $\left\vert \overline{F}\left(  ux\right)  /\overline{F}\left(
u\right)  -x^{-1/\gamma}\right\vert <\epsilon x^{\tau+\epsilon},$ see, for
instance, Proposition B.1.10 in \cite{deHF06} page 369. From assumption
$\left[  A2\right]  ,$ the function $J^{\left(  1\right)  }$ is bounded
function over $\left(  0,1\right)  ,$ then%
\[
I_{u,1}=o\left(  1\right)  \int_{1}^{\infty}x^{\tau+\epsilon}\log
xd\frac{\overline{F}\left(  ux\right)  }{\overline{F}\left(  u\right)  }.
\]
By using an integration by parts to the latter integral, yields%
\[
I_{u,1}=o\left(  1\right)  \int_{1}^{\infty}\frac{\overline{F}\left(
ux\right)  }{\overline{F}\left(  u\right)  }d\left(  x^{\tau+\epsilon}\log
x\right)  .
\]
Recall that the second-order parameter $\tau<0,$ and let $\epsilon$ be
sufficiently so that $\tau+\epsilon<0.$ Then it is easy to verify that the
absolute value of the latter integral is less than or equal to%
\[
\int_{1}^{\infty}\frac{\overline{F}\left(  ux\right)  }{\overline{F}\left(
u\right)  }\left(  1-\left(  \tau+\epsilon\right)  \log x\right)
x^{\tau+\epsilon-1}dx.
\]
We have $0<\overline{F}\left(  ux\right)  /\overline{F}\left(  u\right)
\leq1,$ over $x\geq1,$ and%
\[
0<\int_{1}^{\infty}x^{\tau+\epsilon-1}\left(  1-\left(  \tau+\epsilon\right)
\log x\right)  dx=-2/\left(  \tau+\epsilon\right)  <\infty,
\]
therefore $I_{u,1}=o\left(  1\right)  ,$ as $u\rightarrow\infty.$ Likewise, by
using an integration by part and then by applying once again the previous
Potter's inequality, we show that $I_{u,2}=o\left(  1\right)  $ as well. By
using the change of variable $x^{-1/\gamma},$ yields $I_{u,3}=-\gamma\int%
_{0}^{1}J\left(  s\right)  \left(  \log s\right)  ds,$ which completes the
proof of the first assertion. To prove the second one, let us write%
\[
\mathbf{I}_{u}:=%
{\displaystyle\int_{1}^{\infty}}
J\left(  \frac{\overline{F}\left(  ux\right)  }{\overline{F}\left(  u\right)
}\right)  \dfrac{d}{d\gamma}\log\ell_{\gamma,J}\left(  x\right)
d\frac{F\left(  ux\right)  }{\overline{F}\left(  u\right)  }.
\]
Recall that $\ell_{\gamma,J}\left(  x\right)  =J\left(  x^{-1/\gamma}\right)
\ell_{\gamma}\left(  x\right)  $ and decompose $\mathbf{I}_{u}$ into the sum
of
\[
\mathbf{I}_{u}^{\left(  1\right)  }:=%
{\displaystyle\int_{1}^{\infty}}
J\left(  \frac{\overline{F}\left(  ux\right)  }{\overline{F}\left(  u\right)
}\right)  \dfrac{d}{d\gamma}\log J\left(  x^{-1/\gamma}\right)  d\frac
{F\left(  ux\right)  }{\overline{F}\left(  u\right)  }%
\]
and%
\[
\mathbf{I}_{u}^{\left(  2\right)  }:=%
{\displaystyle\int_{1}^{\infty}}
J\left(  \frac{\overline{F}\left(  ux\right)  }{\overline{F}\left(  u\right)
}\right)  \dfrac{d}{d\gamma}\log\ell_{\gamma}\left(  x\right)  d\frac{F\left(
ux\right)  }{\overline{F}\left(  u\right)  }.
\]
It is clear that%
\[
\mathbf{I}_{u}^{\left(  1\right)  }=%
{\displaystyle\int_{1}^{\infty}}
\dfrac{d}{d\gamma}\log J\left(  x^{-1/\gamma}\right)  d\varphi\left(
\frac{\overline{F}\left(  ux\right)  }{\overline{F}\left(  u\right)  }\right)
,
\]
where $\varphi\left(  s\right)  :=\int_{s}^{1}J\left(  s\right)  dt.$ Let us
write
\[
\dfrac{d}{d\gamma}\log J\left(  x^{-1/\gamma}\right)  =-\gamma^{-1}%
x^{-1/\gamma}\mathcal{L}\left(  x^{-1/\gamma}\right)  ,
\]
where $\mathcal{L}\left(  s\right)  :=\left(  J^{\prime}\left(  s\right)
/J\left(  s\right)  \right)  \log s,$ thus%
\[
\mathbf{I}_{u}^{\left(  1\right)  }=-\gamma^{-1}%
{\displaystyle\int_{1}^{\infty}}
x^{-1/\gamma}\mathcal{L}\left(  x^{-1/\gamma}\right)  d\varphi\left(
\frac{\overline{F}\left(  ux\right)  }{\overline{F}\left(  u\right)  }\right)
.
\]
From assumption $\left[  A3\right]  $ the function $\mathcal{L}$ is bounded
over $\left(  0,1\right)  ,$ then by using an integration by parts, yields%
\[
\gamma\mathbf{I}_{u}^{\left(  1\right)  }=%
{\displaystyle\int_{1}^{\infty}}
\varphi\left(  \frac{\overline{F}\left(  ux\right)  }{\overline{F}\left(
u\right)  }\right)  d\left\{  x^{-1/\gamma}\mathcal{L}\left(  x^{-1/\gamma
}\right)  \right\}  .
\]
By using the routine manipulations of Potter's inequality as used above, we
infer that%
\[
\gamma\mathbf{I}_{u}^{\left(  1\right)  }=%
{\displaystyle\int_{1}^{\infty}}
\varphi\left(  x^{-1/\gamma}\right)  d\left\{  x^{-1/\gamma}\mathcal{L}\left(
x^{-1/\gamma}\right)  \right\}  +o\left(  1\right)  ,
\]
which by using the change of variables $s=x^{-1/\gamma},$ becomes $-%
{\displaystyle\int_{0}^{1}}
\varphi\left(  s\right)  d\left\{  s\mathcal{L}\left(  s\right)  \right\}  .$
Once again, by using twice integration by parts, with the fact that $%
{\displaystyle\int_{0}^{1}}
J\left(  s\right)  ds=1,$ we write
\begin{align*}
-%
{\displaystyle\int_{0}^{1}}
\varphi\left(  s\right)  d\left\{  s\mathcal{L}\left(  s\right)  \right\}   &
=-%
{\displaystyle\int_{0}^{1}}
J\left(  s\right)  s\mathcal{L}\left(  s\right)  ds=-%
{\displaystyle\int_{0}^{1}}
sJ^{\prime}\left(  s\right)  \log sds\\
&  =-\beta+1,
\end{align*}
therefore $\mathbf{I}_{u}^{\left(  1\right)  }=\gamma^{-1}-\gamma^{-1}%
\beta+o\left(  1\right)  .$ Let us now consider the second term $\mathbf{I}%
_{u}^{\left(  2\right)  },$ which equals%
\[%
{\displaystyle\int_{1}^{\infty}}
J\left(  \frac{\overline{F}\left(  ux\right)  }{\overline{F}\left(  u\right)
}\right)  \dfrac{d}{d\gamma}\log\ell_{\gamma}\left(  x\right)  d\frac{F\left(
ux\right)  }{\overline{F}\left(  u\right)  }.
\]
It is easy to verify that $d\log\ell_{\gamma}\left(  x\right)  /d\gamma
=\gamma^{-2}\left(  \log x-\gamma\right)  ,$ then
\[
\mathbf{I}_{u}^{\left(  2\right)  }=\gamma^{-2}%
{\displaystyle\int_{1}^{\infty}}
J\left(  \frac{\overline{F}\left(  ux\right)  }{\overline{F}\left(  u\right)
}\right)  \left(  \log x-\gamma\right)  d\frac{F\left(  ux\right)  }%
{\overline{F}\left(  u\right)  },
\]
which equals%
\[
\gamma^{-2}%
{\displaystyle\int_{1}^{\infty}}
J\left(  \frac{\overline{F}\left(  ux\right)  }{\overline{F}\left(  u\right)
}\right)  \log xd\frac{F\left(  ux\right)  }{\overline{F}\left(  u\right)
}-\gamma^{-1}%
{\displaystyle\int_{1}^{\infty}}
J\left(  \frac{\overline{F}\left(  ux\right)  }{\overline{F}\left(  u\right)
}\right)  d\frac{F\left(  ux\right)  }{\overline{F}\left(  u\right)  }.
\]
Note that the second integral equals$%
{\displaystyle\int_{0}^{1}}
J\left(  s\right)  ds=1,$ therefore
\[
\mathbf{I}_{u}^{\left(  2\right)  }=\gamma^{-2}%
{\displaystyle\int_{1}^{\infty}}
J\left(  \frac{\overline{F}\left(  ux\right)  }{\overline{F}\left(  u\right)
}\right)  \log xd\frac{F\left(  ux\right)  }{\overline{F}\left(  u\right)
}-\gamma^{-1}+o\left(  1\right)  .
\]
In summary, we showed that
\[
\mathbf{I}_{u}=\mathbf{I}_{u}^{\left(  1\right)  }+\mathbf{I}_{u}^{\left(
2\right)  }=\gamma^{-2}%
{\displaystyle\int_{1}^{\infty}}
\log xdF_{u}\left(  x\right)  -\gamma^{-1}\beta+o\left(  1\right)  ,\text{ as
}u\rightarrow\infty,
\]
which meets the second assertion. It is clear that $\beta^{-1}%
{\displaystyle\int_{1}^{\infty}}
\log xdF_{u}\left(  x\right)  \rightarrow\gamma$ is equivalent to
$\mathbf{I}_{u}\rightarrow0,$ thus we have completed the proof of Lemma
$\ref{lemma1}.$
\end{proof}

\begin{lemma}
\textbf{\label{lemma2}}Assume that $\overline{F}$ satisfies the first-order
condition $\left(  \ref{RV}\right)  .$ Let $J$ be a continuous function
fulfilling assumptions $\left[  A1\right]  -\left[  A3\right]  $ and let $k$
be a sequence of integers such that $k\rightarrow\infty$ and $k/n\rightarrow
0,$ then
\[
\pi_{k}^{\left(  1\right)  }\left(  \gamma_{0}\right)  \overset{\mathbf{P}%
}{\rightarrow}0\text{ and \ }\pi_{k}^{\left(  2\right)  }\left(  \gamma
_{0}\right)  \overset{\mathbf{P}}{\rightarrow}\eta_{\gamma_{0},\alpha},\text{
as }n\rightarrow\infty,
\]
for every $\alpha>0,$ where $\eta_{\gamma_{0},\alpha}$ is as in Theorem
$\ref{Theorem2}.$
\end{lemma}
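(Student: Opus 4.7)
The plan is to identify the in-probability limit of the Riemann-type sum $A_k^{(m)}(\gamma_0)$ and then to recognize the resulting deterministic quantity as either zero (for $m=1$) or the constant $\eta_{\gamma_0,\alpha}$ (for $m=2$). The driving mechanism is a Riemann-sum approximation powered by the first-order regular variation of $\overline{F}$, together with the boundedness conditions on $J$ and $\mathcal{L}$ imposed in [A1]--[A3].

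First I would rewrite $A_k^{(m)}(\gamma_0)$ as an integral against the empirical tail quantile function. With the substitution $s\leftrightarrow i/(k+1)$, the sum takes the form
\[
A_k^{(m)}(\gamma_0)=\int_{1/(k+1)}^{1} J\!\left(\tfrac{\lceil (k+1)s\rceil}{k+1}\right)\Psi_{\gamma_0,\alpha}^{(m)}\!\left(\frac{X_{n-\lceil (k+1)s\rceil+1:n}}{X_{n-k:n}}\right)ds+o_{\mathbf{P}}(1).
\]
The regular-variation condition (\ref{RV}) together with the well-known consistency of the tail empirical quantile process yields, for every $\epsilon\in(0,1)$,
\[
\sup_{\epsilon\le s\le 1-\epsilon}\left|\frac{X_{n-\lceil (k+1)s\rceil+1:n}}{X_{n-k:n}}-s^{-\gamma_0}\right|\overset{\mathbf{P}}{\rightarrow}0.
\]
Combined with the continuity of $\Psi_{\gamma_0,\alpha}^{(m)}$ guaranteed by [A2]--[A3], this gives pointwise convergence of the integrand on $(0,1)$. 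Using the boundedness of $J$ and suitable Potter-type bounds on $\Psi_{\gamma_0,\alpha}^{(m)}(s^{-\gamma_0})$ to supply a uniform dominating function near the endpoints, a dominated-convergence-in-probability argument then produces
\[
A_k^{(m)}(\gamma_0)\overset{\mathbf{P}}{\rightarrow}\int_0^1 J(s)\,\Psi_{\gamma_0,\alpha}^{(m)}(s^{-\gamma_0})\,ds.
\]

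Consequently $\pi_k^{(m)}(\gamma_0)$ converges in probability to
\[
L^{(m)}:=\int_1^\infty\Psi_{\gamma_0,\alpha+1}^{(m)}(x)\,dx-\Bigl(1+\tfrac{1}{\alpha}\Bigr)\int_0^1 J(s)\,\Psi_{\gamma_0,\alpha}^{(m)}(s^{-\gamma_0})\,ds.
\]
For $m=1$, performing the change of variable $s=x^{-1/\gamma_0}$ and integrating by parts, in the spirit of the computations carried out in the proof of Lemma \ref{lemma1}, combined with $\int_0^1 J(s)\,ds=1$ and the explicit form $\ell_{\gamma,J}(x)=J(x^{-1/\gamma})\,\gamma^{-1}x^{-1-1/\gamma}$, shows that the two terms in $L^{(1)}$ cancel. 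This is the Fisher-consistency of the population WMDPD functional at the true parameter $\gamma_0$ -- the natural analogue of Lemma \ref{lemma1} for $\alpha>0$. For $m=2$ the same procedure yields a non-zero expression which, by the defining relation (\ref{eta}) of $\eta_{\gamma_0,\alpha}$, coincides with the announced limit.

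The main obstacle is the uniform control of the integrand near the endpoints $s=0$ and $s=1$. Near $s=1$ the weighted quantities $J(i/(k+1))\,\Psi_{\gamma_0,\alpha}^{(m)}(X_{n-i+1:n}/X_{n-k:n})$ for $i$ close to $k$ are delicate: $J$ vanishes while the logarithmic-derivative factors arising in $\Psi_{\gamma_0,\alpha}^{(m)}$ may blow up, and here the boundedness of $\mathcal{L}$ and its derivatives provided by [A3] is what guarantees integrability. Near $s=0$ the ratio $X_{n-i+1:n}/X_{n-k:n}$ becomes large, and Potter-type bounds applied to $U$ (in the spirit of the estimates used in the proof of Lemma \ref{lemma1}), combined with moment bounds for the top order statistics, are needed to dominate the weighted sum. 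Splitting the integral into an interior region, where the uniform consistency above applies directly, and two boundary regions controlled by the decay of $J$ and by Potter bounds, is the technical core of the argument.
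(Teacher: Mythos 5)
Your proposal is correct in substance but follows a genuinely different route from the paper's. You work on the quantile side: you read $A_{k}^{\left(  m\right)  }\left(  \gamma_{0}\right)  $ as a Riemann sum, invoke the uniform consistency of the ratios $X_{n-\lceil\left(  k+1\right)  s\rceil+1:n}/X_{n-k:n}\rightarrow s^{-\gamma_{0}}$ on interior compacts (which indeed needs only the first-order condition $\left(  \ref{RV}\right)  $), and pass to the limit $\int_{0}^{1}J\left(  s\right)  \Psi_{\gamma_{0},\alpha}^{\left(  m\right)  }\left(  s^{-\gamma_{0}}\right)  ds=\int_{1}^{\infty}\ell_{\gamma_{0},J}\left(  x\right)  \Psi_{\gamma_{0},\alpha}^{\left(  m\right)  }\left(  x\right)  dx$; the identification $L^{\left(  1\right)  }=0$ and $L^{\left(  2\right)  }=\eta_{\gamma_{0},\alpha}$ is then exactly the content of the paper's Proposition $\ref{prop1}$ --- note that for $m=1$ no integration by parts is needed, since the pointwise chain-rule identity $\Psi_{\gamma,\alpha+1}^{\left(  1\right)  }=\left(  1+\tfrac{1}{\alpha}\right)  \ell_{\gamma,J}\Psi_{\gamma,\alpha}^{\left(  1\right)  }$ makes the integrand of $L^{\left(  1\right)  }$ vanish identically. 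The paper instead works on the distribution-function side: it writes $A_{k}^{\left(  1\right)  }\left(  \gamma_{0}\right)  =-\int_{1}^{\infty}\Psi_{\gamma_{0},\alpha}^{\left(  1\right)  }\left(  x\right)  d\varphi\bigl(\overline{F}_{n}\left(  X_{n-k:n}x\right)  /\overline{F}_{n}\left(  X_{n-k:n}\right)  \bigr)$ with $\varphi\left(  v\right)  =\int_{0}^{v}J\left(  t\right)  dt$, integrates by parts to put the randomness inside $\varphi$, cancels the deterministic part via Proposition $\ref{prop1}$, and kills the stochastic remainder $T_{k}^{\left(  1\right)  }+R_{k}^{\left(  1\right)  }$ using the weighted uniform approximation of the tail empirical process (Proposition $\ref{prop4}$) together with $\int_{1}^{\infty}\left\vert d\Psi_{\gamma_{0},\alpha}^{\left(  1\right)  }\left(  x\right)  /dx\right\vert dx<\infty$ (Proposition $\ref{prop3}$). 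Your route is the more elementary one if consistency is all that is wanted; the paper's buys reusability, since the same $T_{k}$/$R_{k}$ decomposition is precisely what delivers the $\sqrt{k}$-rate Gaussian approximation with bias term in Lemma $\ref{lemma3}$, so the consistency proof doubles as the first half of the asymptotic-normality proof. One simplification you should note: your boundary analysis is more cautious than necessary. Under $\left[  A1\right]  -\left[  A3\right]  $, Proposition $\ref{prop2}$ shows $\Psi_{\gamma_{0},\alpha}^{\left(  m\right)  }$ is \emph{bounded} on $\left(  1,\infty\right)  $ for $\alpha>0$ (the factor $\ell_{\gamma_{0},J}^{\alpha}$ decays like $x^{-\alpha\left(  1+1/\gamma_{0}\right)  }$ up to logarithms, and the $\mathcal{L}^{\left(  j\right)  }$ factors are bounded by $\left[  A3\right]  $), so your Riemann-sum integrand is dominated by a constant and the two boundary strips contribute $O\left(  \epsilon\right)  $ uniformly in $n$: no Potter bounds on $U$ and no moment bounds for the top order statistics are required, because the large values of $X_{n-i+1:n}/X_{n-k:n}$ near $s=0$ are tamed by the decay of $\Psi_{\gamma_{0},\alpha}^{\left(  m\right)  }$ itself rather than by moments.
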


\begin{proof}
Recall the formulas of $A_{k}^{\left(  1\right)  }\left(  \gamma_{0}\right)
,$ $\Psi_{\gamma_{0},\alpha}^{\left(  1\right)  }\left(  x\right)  ,$ $\pi
_{k}^{\left(  1\right)  }\left(  \gamma_{0}\right)  $ and $\ell_{\gamma_{0},J}
$ given respectively in $\left(  \ref{Am}\right)  ,$ $\left(  \ref{psi-m}%
\right)  ,$ $\left(  \ref{pi}\right)  $ and $\left(  \ref{l-J-gamma}\right)
.$ Let us rewrite $A_{k}^{\left(  1\right)  }\left(  \gamma_{0}\right)  $ into%
\[
\int_{1}^{\infty}J\left(  \frac{\overline{F}_{n}\left(  X_{n-k:n}x\right)
}{\overline{F}_{n}\left(  X_{n-k:n}\right)  }\right)  \Psi_{\gamma_{0},\alpha
}^{\left(  1\right)  }\left(  x\right)  d\frac{F_{n}\left(  X_{n-k:n}x\right)
}{\overline{F}_{n}\left(  X_{n-k:n}\right)  },
\]
where $F_{n}$ being the empirical cdf pertaining to the sample $X_{1}%
,...,X_{n}.$ It is clear that%
\[
A_{k}^{\left(  1\right)  }\left(  \gamma_{0}\right)  =-\int_{1}^{\infty}%
\Psi_{\gamma_{0},\alpha}^{\left(  1\right)  }\left(  x\right)  d\varphi\left(
\frac{\overline{F}_{n}\left(  X_{n-k:n}x\right)  }{\overline{F}_{n}\left(
X_{n-k:n}\right)  }\right)  ,
\]
where $\varphi\left(  v\right)  :=\int_{0}^{v}J\left(  t\right)
dt.$\textbf{\ }From Proposition $\ref{prop3},$\textbf{\ }we infer
that\textbf{\ }$\Psi_{\gamma_{0,\alpha}}^{\left(  1\right)  }\left(
\infty\right)  $ is finite, then by using an integration by parts yields%
\[
A_{k}^{\left(  1\right)  }\left(  \gamma_{0}\right)  =\Psi_{\gamma_{0},\alpha
}^{\left(  1\right)  }\left(  1\right)  +\int_{1}^{\infty}\varphi\left(
\frac{\overline{F}_{n}\left(  X_{n-k:n}x\right)  }{\overline{F}_{n}\left(
X_{n-k:n}\right)  }\right)  d\Psi_{\gamma_{0},\alpha}^{\left(  1\right)
}\left(  x\right)  .
\]
In Proposition\textbf{\ }$\ref{prop1},$ we stated that
\begin{align*}
\int_{1}^{\infty}\Psi_{\gamma_{0},\alpha+1}^{\left(  1\right)  }dx  &
=\left(  1+\frac{1}{\alpha}\right)  \int_{1}^{\infty}\ell_{\gamma_{0}%
,J}\left(  x\right)  \Psi_{\gamma_{0},\alpha}^{\left(  1\right)  }\left(
x\right)  dx\\
&  =\left(  1+\frac{1}{\alpha}\right)  \int_{1}^{\infty}\Psi_{\gamma
_{0},\alpha}^{\left(  1\right)  }\left(  x\right)  \gamma_{0}^{-1}%
x^{-1-1/\gamma_{0}}J\left(  x^{-1/\gamma_{0}}\right)  dx,
\end{align*}
which may rewritten into%
\[
\left(  1+\frac{1}{\alpha}\right)  \int_{1}^{\infty}\Psi_{\gamma_{0},\alpha
}^{\left(  1\right)  }\left(  x\right)  J\left(  x^{-1/\gamma_{0}}\right)
dx^{-1/\gamma_{0}}=-\left(  1+\frac{1}{\alpha}\right)  \int_{1}^{\infty}%
\Psi_{\gamma_{0},\alpha}^{\left(  1\right)  }\left(  x\right)  d\varphi\left(
x^{-1/\gamma_{0}}\right)  ,
\]
thus by using an integration by parts yields
\[
\int_{1}^{\infty}\Psi_{\gamma_{0},\alpha+1}^{\left(  1\right)  }dx=\left(
1+\frac{1}{\alpha}\right)  \left\{  \Psi_{\gamma_{0},\alpha}^{\left(
1\right)  }\left(  1\right)  +\int_{1}^{\infty}\varphi\left(  x^{-1/\gamma
_{0}}\right)  d\Psi_{\gamma_{0},\alpha}^{\left(  1\right)  }\left(  x\right)
\right\}  .
\]
It is easy to verify now that%
\[
-\left(  1+\frac{1}{\alpha}\right)  ^{-1}\pi_{k}^{\left(  1\right)  }\left(
\gamma_{0}\right)  =\int_{1}^{\infty}\left\{  \varphi\left(  \frac{n}%
{k}\overline{F}_{n}\left(  X_{n-k:n}x\right)  \right)  -\varphi\left(
\frac{n}{k}\overline{F}\left(  X_{n-k:n}x\right)  \right)  \right\}
d\Psi_{\gamma_{0},\alpha}^{\left(  1\right)  }\left(  x\right)  .
\]
By applyig Taylor's expansion to function $\varphi,$ we decompose the previous
expression into the sum of%
\[
T_{k}^{\left(  1\right)  }:=\int_{1}^{\infty}\left(  \frac{n}{k}\overline
{F}_{n}\left(  X_{n-k:n}x\right)  -\frac{n}{k}\overline{F}\left(
X_{n-k:n}x\right)  \right)  J\left(  x^{-1/\gamma_{0}}\right)  d\Psi
_{\gamma_{0},\alpha}^{\left(  1\right)  }\left(  x\right)
\]
and%
\[
R_{k}^{\left(  1\right)  }:=\frac{1}{2}\int_{1}^{\infty}\left(  \frac{n}%
{k}\overline{F}_{n}\left(  xX_{n-k:n}\right)  -\frac{n}{k}\overline{F}\left(
xX_{n-k:n}\right)  \right)  ^{2}J^{\left(  1\right)  }\left(  d_{n}\left(
x\right)  \right)  d\Psi_{\gamma_{0},\alpha}^{\left(  1\right)  }\left(
x\right)  ,
\]
where $d_{n}\left(  x\right)  $ is between $\frac{n}{k}\overline{F}\left(
xX_{n-k:n}\right)  $ and $\frac{n}{k}\overline{F}_{n}\left(  xX_{n-k:n}%
\right)  $ and $J^{\left(  1\right)  }$ denotes the first derivative of $J.$
From the first assertion of Proposition $\ref{prop4},$ we deduce that: for
every $0<\epsilon<1/2$%
\[
\frac{n}{k}\overline{F}_{n}\left(  X_{n-k:n}x\right)  -\frac{n}{k}\overline
{F}\left(  X_{n-k:n}x\right)  =k^{-1/2}O_{\mathbf{P}}\left(  x^{\left(
\epsilon-1/2\right)  /\gamma_{0}}\right)  ,
\]
uniformly over $x>1.$ Since $k^{-1/2}\rightarrow0,$ then $k^{-1/2}%
O_{\mathbf{P}}\left(  x^{\left(  \epsilon-1/2\right)  /\gamma_{0}}\right)
=o_{\mathbf{P}}\left(  x^{\left(  \epsilon-1/2\right)  /\gamma_{0}}\right)  ,$
therefore%
\[
T_{k}^{\left(  1\right)  }=o_{\mathbf{P}}\left(  1\right)  \int_{1}^{\infty
}J\left(  x^{-1/\gamma_{0}}\right)  x^{\left(  \epsilon-1/2\right)
/\gamma_{0}}\left\vert \frac{d}{dx}\Psi_{\gamma_{0},\alpha}^{\left(  1\right)
}\left(  x\right)  \right\vert dx.
\]
It is clear that, for every $0<\epsilon<1/2,$ the function $x\rightarrow
x^{\left(  \epsilon-1/2\right)  /\gamma_{0}}J\left(  x^{-1/\gamma_{0}}\right)
$ is bounded over $\left(  0,1\right)  $ and from Proposition $\ref{prop3}$ we
have $\int_{1}^{\infty}\left\vert \frac{d}{dx}\Psi_{\gamma_{0},\alpha
}^{\left(  1\right)  }\left(  x\right)  \right\vert dx$ is finite, it follows
that $T_{k}^{\left(  1\right)  }=o_{\mathbf{P}}\left(  1\right)  .$ By using
similar arguments we show that $R_{k}^{\left(  1\right)  }=o_{\mathbf{P}%
}\left(  1\right)  $ as well. The proof of Lemma $\ref{lemma2}$ is now completed.
\end{proof}

\begin{lemma}
\textbf{\label{lemma3}}Assume that $\overline{F}$ satisfies the second-order
condition $\left(  \ref{second-order}\right)  .$ Let $J$ be a continuous
function fulfilling assumptions $\left[  A1\right]  -\left[  A3\right]  $ and
$k$ be a sequence of integers such that $k\rightarrow\infty,$ $k/n\rightarrow0
$ and $\sqrt{k}a\left(  n/k\right)  $ is asymptotically bounded. Then, in the
probability space $\left(  \Omega,\mathcal{A},\mathbf{P}\right)  $ there
exists a standard Wiener process $\left\{  W\left(  x\right)  ,\text{ }%
x\geq0\right\}  ,$ such that for every $\alpha>0:$%
\begin{align}
&  -\left(  1+\frac{1}{\alpha}\right)  ^{-1}\sqrt{k}\pi_{k}^{\left(  1\right)
}\left(  \gamma_{0}\right) \nonumber\\
&  =\int_{1}^{\infty}\left(  W\left(  x^{-1/\gamma_{0}}\right)  -x^{-1/\gamma
_{0}}W\left(  1\right)  \right)  J\left(  x^{-1/\gamma_{0}}\right)
d\Psi_{\gamma_{0},\alpha}^{\left(  1\right)  }\left(  x\right)  +\sqrt
{k}a\left(  n/k\right)  B_{\gamma_{0},\alpha}^{\left(  1\right)
}+o_{\mathbf{P}}\left(  1\right)  , \label{app1}%
\end{align}
and%
\begin{align}
&  -\left(  1+\frac{1}{\alpha}\right)  ^{-1}\sqrt{k}\left(  \pi_{k}^{\left(
2\right)  }\left(  \gamma_{0}\right)  -\eta_{\gamma_{0},\alpha}\right)
\label{app2}\\
&  =\int_{1}^{\infty}\left(  W\left(  x^{-1/\gamma_{0}}\right)  -x^{-1/\gamma
_{0}}W\left(  1\right)  \right)  J\left(  x^{-1/\gamma_{0}}\right)
d\Psi_{\gamma_{0},\alpha}^{\left(  2\right)  }\left(  x\right)  +\sqrt
{k}a\left(  n/k\right)  B_{\gamma_{0},\alpha}^{\left(  2\right)
}+o_{\mathbf{P}}\left(  1\right)  ,\nonumber
\end{align}
as $n\rightarrow\infty,$ where
\begin{equation}
\eta_{\gamma_{0},\alpha}:=\left(  1+\alpha\right)  \int_{1}^{\infty}\left(
\Psi_{\gamma_{0},1}^{\left(  1\right)  }\left(  x\right)  \right)  ^{2}%
\ell_{\gamma_{0},J}^{\alpha-1}\left(  x\right)  dx, \label{eta}%
\end{equation}%
\begin{equation}
B_{\gamma_{0},\alpha}^{\left(  2\right)  }:=\int_{1}^{\infty}x^{-1/\gamma_{0}%
}\frac{x^{\tau/\gamma_{0}}-1}{\tau\gamma_{0}}J\left(  x^{-1/\gamma_{0}%
}\right)  d\Psi_{\gamma_{0},\alpha}^{\left(  2\right)  }\left(  x\right)  ,
\label{AB}%
\end{equation}
and $\Psi_{\gamma_{0},\alpha}^{\left(  m\right)  }$ is as in $\left(
\ref{psi-m}\right)  .$ Moreover
\begin{equation}
\pi_{k}^{\left(  1\right)  }\left(  \gamma_{0}\right)  \overset{\mathbf{P}%
}{\rightarrow}0\text{ and }\pi_{k}^{\left(  2\right)  }\left(  \gamma
_{0}\right)  \overset{\mathbf{P}}{\rightarrow}\eta_{\gamma_{0},\alpha},\text{
as }n\rightarrow\infty. \label{consist}%
\end{equation}

\end{lemma}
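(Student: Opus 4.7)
The plan is to build on the identity
\[
-\left(1+\tfrac{1}{\alpha}\right)^{-1}\pi_k^{(1)}(\gamma_0) = \int_1^\infty \left[\varphi\left(\tfrac{n}{k}\overline{F}_n(X_{n-k:n}x)\right) - \varphi(x^{-1/\gamma_0})\right] d\Psi_{\gamma_0,\alpha}^{(1)}(x),
\]
obtained in the proof of Lemma \ref{lemma2} from the fact that $\overline{F}_n(X_{n-k:n})=k/n$, where $\varphi(v):=\int_0^v J(t)\,dt$ so that $\varphi'=J$. First I would apply a first-order Taylor expansion of $\varphi$ around $x^{-1/\gamma_0}$, producing a linear term $J(x^{-1/\gamma_0})\,[\tfrac{n}{k}\overline{F}_n(X_{n-k:n}x)-x^{-1/\gamma_0}]$ and a quadratic remainder whose size is controlled by the boundedness of $J^{(1)}$ supplied by assumption $[A2]$.

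Next I would split the bracket as $[\tfrac{n}{k}\overline{F}_n(X_{n-k:n}x)-\tfrac{n}{k}\overline{F}(X_{n-k:n}x)]+[\tfrac{n}{k}\overline{F}(X_{n-k:n}x)-x^{-1/\gamma_0}]$. The first piece is a tail empirical process evaluated at the random threshold $X_{n-k:n}$; on an enlarged probability space, Proposition \ref{prop4} delivers a Wiener approximation of the form $k^{-1/2}[W(x^{-1/\gamma_0})-x^{-1/\gamma_0}W(1)]$ up to a weighted uniform error of order $k^{-1/2}x^{(\epsilon-1/2)/\gamma_0}$. The second piece, by the second-order condition (\ref{second-order}) applied at $t\sim U(n/k)\asymp X_{n-k:n}$, contributes the deterministic bias $a(n/k)\,x^{-1/\gamma_0}(x^{\tau/\gamma_0}-1)/(\tau\gamma_0)$ uniformly in $x\geq 1$. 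Integrating these two contributions against $J(x^{-1/\gamma_0})\,d\Psi_{\gamma_0,\alpha}^{(1)}(x)$ produces exactly the stochastic functional and the bias term $\sqrt{k}\,a(n/k)\,B_{\gamma_0,\alpha}^{(1)}$ displayed in (\ref{app1}); the quadratic Taylor remainder is $O_{\mathbf{P}}(k^{-1})$ against an integrable density by Proposition \ref{prop3}, hence $o_{\mathbf{P}}(k^{-1/2})$. The proof of (\ref{app2}) is the same argument with $\Psi_{\gamma_0,\alpha}^{(1)}$ replaced by $\Psi_{\gamma_0,\alpha}^{(2)}$, and the consistency statement (\ref{consist}) follows by identifying the deterministic limits: $\int_1^\infty \varphi(x^{-1/\gamma_0})\,d\Psi_{\gamma_0,\alpha}^{(m)}(x)$ combined with $\int_1^\infty \Psi_{\gamma_0,\alpha+1}^{(m)}(x)\,dx$ vanishes when $m=1$ (this is essentially Lemma \ref{lemma1}) and equals $\eta_{\gamma_0,\alpha}$ when $m=2$, after integration by parts exploiting the boundedness of $\mathcal{L}$ and $\mathcal{L}^{(1)}$ guaranteed by $[A3]$.

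The hard part will be the uniform weighted control of the tail empirical process over the unbounded domain $[1,\infty)$ against the signed measure $d\Psi_{\gamma_0,\alpha}^{(m)}$. One must verify that the weight $x^{(\epsilon-1/2)/\gamma_0}$ arising from Proposition \ref{prop4} is integrable against the density of $\Psi_{\gamma_0,\alpha}^{(m)}$, whose tail behaviour is governed by the Pareto factor $x^{-1-\alpha/\gamma_0}$ multiplied by logarithmic powers produced by differentiating $\ell_{\gamma,J}^\alpha$, while its behaviour near $x=1$ depends on the derivatives of $J(x^{-1/\gamma_0})$ at $x=1$. This is precisely the balance enforced by assumptions $[A2]$ and $[A3]$; careful bookkeeping of the resulting $x$-powers, together with the integrability estimates supplied by Proposition \ref{prop3}, should suffice to absorb both the Taylor remainder and the uniform error of the Gaussian approximation into $o_{\mathbf{P}}(1)$.
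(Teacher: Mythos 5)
Your skeleton coincides with the paper's own proof: the same starting identity expressing $-\left(1+\frac{1}{\alpha}\right)^{-1}\pi_{k}^{\left(1\right)}\left(\gamma_{0}\right)$ through $\varphi\left(v\right)=\int_{0}^{v}J\left(t\right)dt$, a Taylor expansion of $\varphi$ whose quadratic remainder is absorbed via $\left[A2\right]$ and Proposition \ref{prop3}, the weighted Gaussian approximation of Proposition \ref{prop4}, the second-order condition for the bias, and Proposition \ref{prop1} to center $\pi_{k}^{\left(2\right)}$ at $\eta_{\gamma_{0},\alpha}$. However, there is a genuine gap in your two-piece split of the bracket. Proposition \ref{prop4} approximates $D_{k}\left(x\right)=\sqrt{k}\left(\frac{n}{k}\overline{F}_{n}\left(X_{n-k:n}x\right)-\frac{n}{k}\overline{F}\left(X_{n-k:n}x\right)\right)$ by $W\left(x^{-1/\gamma_{0}}\right)$ \emph{alone}; the bridge term $-x^{-1/\gamma_{0}}W\left(1\right)$ that you attribute to it is not part of that statement. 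Conversely, your second piece $\frac{n}{k}\overline{F}\left(X_{n-k:n}x\right)-x^{-1/\gamma_{0}}$ is not a deterministic bias uniformly in $x\geq1$: writing $\frac{n}{k}\overline{F}\left(X_{n-k:n}x\right)=\frac{n}{k}\overline{F}\left(X_{n-k:n}\right)\cdot\frac{\overline{F}\left(X_{n-k:n}x\right)}{\overline{F}\left(X_{n-k:n}\right)}$ shows it carries the threshold fluctuation $\left(\frac{n}{k}\overline{F}\left(X_{n-k:n}\right)-1\right)x^{-1/\gamma_{0}}$, of exact order $k^{-1/2}$, and since $\frac{n}{k}\overline{F}_{n}\left(X_{n-k:n}\right)=1$ one has $\sqrt{k}\left(\frac{n}{k}\overline{F}\left(X_{n-k:n}\right)-1\right)=-D_{k}\left(1\right)\approx-W\left(1\right)$: this is precisely where $-x^{-1/\gamma_{0}}W\left(1\right)$ comes from. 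As your plan stands, the bridge term is either double-counted (if one accepts your reading of Proposition \ref{prop4}) or lost (if one uses its actual statement), and the joint coupling of $W\left(1\right)$ with $W\left(x^{-1/\gamma_{0}}\right)$ inside the stochastic integral, which is what makes the variance $\sigma_{\gamma_{0},\alpha}^{2}$ in Theorem \ref{Theorem2} correct, is never established.

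The paper fixes exactly this point with a three-way decomposition $T_{k}^{\left(1,1\right)}+T_{k}^{\left(1,2\right)}+T_{k}^{\left(1,3\right)}$: the part $T_{k}^{\left(1,1\right)}$ (empirical minus true, both at the random threshold) yields $\int_{1}^{\infty}W\left(x^{-1/\gamma_{0}}\right)J\left(x^{-1/\gamma_{0}}\right)d\Psi_{\gamma_{0},\alpha}^{\left(1\right)}\left(x\right)$ via Proposition \ref{prop4}; the part $T_{k}^{\left(1,2\right)}$ (true tail at $X_{n-k:n}$ versus at $U\left(n/k\right)$) is treated with the uniform second-order Potter inequality, its drift component $L_{n1}$ being negligible and its main component governed by $\sqrt{k}\bigl(\left(X_{n-k:n}/U\left(n/k\right)\right)^{-1/\gamma_{0}}-1\bigr)=-W\left(1\right)+o_{\mathbf{P}}\left(1\right)$ (the step imported from \cite{BMN-2015} and \cite{BchMN16}), which delivers the $-x^{-1/\gamma_{0}}W\left(1\right)$ term jointly with the same Wiener process; only $T_{k}^{\left(1,3\right)}$, taken at the deterministic threshold $U\left(n/k\right)$, is the pure bias $\sqrt{k}a\left(n/k\right)B_{\gamma_{0},\alpha}^{\left(1\right)}+o_{\mathbf{P}}\left(1\right)$. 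Your remaining ingredients, namely repeating the argument with $\Psi_{\gamma_{0},\alpha}^{\left(2\right)}$ for \eqref{app2} after using the second identity of Proposition \ref{prop1}, the integrability bookkeeping through Propositions \ref{prop2} and \ref{prop3}, and deducing \eqref{consist} from the $\sqrt{k}$-boundedness of both expansions, do match the paper; what must be added is the separate analysis of the random-threshold fluctuation and its joint Gaussian limit.
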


\begin{proof}
In the proof of Lemma $\ref{lemma2},$ we decomposed $-\left(  1+\frac
{1}{\alpha}\right)  ^{-1}\pi_{k}^{\left(  1\right)  }\left(  \gamma
_{0}\right)  $ into the sum of%
\[
T_{k}^{\left(  1\right)  }:=\int_{1}^{\infty}\left(  \frac{n}{k}\overline
{F}_{n}\left(  X_{n-k:n}x\right)  -\frac{n}{k}\overline{F}\left(
X_{n-k:n}x\right)  \right)  J\left(  x^{-1/\gamma_{0}}\right)  d\Psi
_{\gamma_{0},\alpha}^{\left(  1\right)  }\left(  x\right)
\]
and%
\[
R_{k}^{\left(  1\right)  }:=\frac{1}{2}\int_{1}^{\infty}\left(  \frac{n}%
{k}\overline{F}_{n}\left(  X_{n-k:n}x\right)  -\frac{n}{k}\overline{F}\left(
X_{n-k:n}x\right)  \right)  ^{2}J^{\prime}\left(  d_{n}\left(  x\right)
\right)  d\Psi_{\gamma_{0},\alpha}^{\left(  1\right)  }\left(  x\right)  ,
\]
where $d_{n}\left(  x\right)  $ is between $x^{-1/\gamma_{0}}$ and $\frac
{n}{k}\overline{F}_{n}\left(  X_{n-k:n}x\right)  .$ Let us rewrite
$T_{k}^{\left(  1\right)  }$ into the sum of%
\[
T_{k}^{\left(  1,1\right)  }:=\int_{1}^{\infty}\left(  \frac{n}{k}\overline
{F}_{n}\left(  X_{n-k:n}x\right)  -\frac{n}{k}\overline{F}\left(
X_{n-k:n}x\right)  \right)  J\left(  x^{-1/\gamma_{0}}\right)  d\Psi
_{\gamma_{0},\alpha}^{\left(  1\right)  }\left(  x\right)  ,
\]%
\[
T_{k}^{\left(  1,2\right)  }:=\int_{1}^{\infty}\left(  \frac{n}{k}\overline
{F}\left(  X_{n-k:n}x\right)  -\overline{F}\left(  U\left(  n/k\right)
x\right)  \right)  J\left(  x^{-1/\gamma_{0}}\right)  d\Psi_{\gamma_{0}%
,\alpha}^{\left(  1\right)  }\left(  x\right)
\]
and%
\[
T_{k}^{\left(  1,3\right)  }:=\int_{1}^{\infty}\left(  \frac{n}{k}\overline
{F}\left(  U\left(  n/k\right)  x\right)  -x^{-1/\gamma_{0}}\right)  J\left(
x^{-1/\gamma_{0}}\right)  d\Psi_{\gamma_{0},\alpha}^{\left(  1\right)
}\left(  x\right)  .
\]
In Proposition $\ref{prop4}$ we announced that: on the probability space
$\left(  \Omega,\mathcal{A},\mathbf{P}\right)  ,$ there exists a standard
Wiener process $\left\{  W\left(  x\right)  ,\text{ }x\geq0\right\}  ,$ such
that for $x\geq1$ and $0<\epsilon<1/2$%
\begin{equation}
\sup_{x\geq1}x^{\left(  1/2-\epsilon\right)  }\left\vert D_{k}\left(
x\right)  -W\left(  x^{-1/\gamma_{0}}\right)  \right\vert \overset{\mathbf{P}%
}{\rightarrow}0, \label{wa}%
\end{equation}
as $n\rightarrow\infty,$ where $D_{k}\left(  x\right)  :=\sqrt{k}\left(
\frac{n}{k}\overline{F}_{n}\left(  xX_{n-k:n}\right)  -\frac{n}{k}\overline
{F}\left(  X_{n-k:n}x\right)  \right)  .$ To apply this Gaussian
approximation, we decompose $\sqrt{k}T_{k}^{\left(  1,1\right)  }$ into the
sum of
\begin{equation}
\sqrt{k}T_{k,1}^{\left(  1,1\right)  }:=\int_{1}^{\infty}W\left(
x^{-1/\gamma_{0}}\right)  J\left(  x^{-1/\gamma_{0}}\right)  d\Psi_{\gamma
_{0},\alpha}^{\left(  1\right)  }\left(  x\right)  , \label{w-approxi-1}%
\end{equation}
and
\[
\sqrt{k}T_{k,2}^{\left(  1,1\right)  }:=\int_{1}^{\infty}\left(  D_{k}\left(
x\right)  -W\left(  x^{-1/\gamma_{0}}\right)  \right)  J\left(  x^{-1/\gamma
_{0}}\right)  d\Psi_{\gamma_{0},\alpha}^{\left(  1\right)  }\left(  x\right)
.
\]
We keep the term $\sqrt{k}T_{k,1}^{\left(  1,1\right)  }$ and show that
$\sqrt{k}T_{k,2}^{\left(  1,1\right)  }\overset{\mathbf{P}}{\rightarrow}0$ as
$n\rightarrow\infty.$ Indeed, by applying the Gaussian approximation $\left(
\ref{wa}\right)  ,$ yields%
\[
\sqrt{k}T_{k,2}^{\left(  1,1\right)  }=o_{\mathbf{P}}\left(  1\right)
\int_{1}^{\infty}x^{-\left(  1/2-\epsilon\right)  /\gamma_{0}}J\left(
x^{-1/\gamma_{0}}\right)  \left\vert \frac{d}{dx}\Psi_{\gamma_{0},\alpha
}^{\left(  1\right)  }\left(  x\right)  \right\vert dx.
\]
In view of assumption $\left[  A2\right]  ,$ the function $x\rightarrow
x^{-\left(  1/2-\epsilon\right)  /\gamma_{0}}J\left(  x^{-1/\gamma_{0}%
}\right)  $ is bounded over $x>1.$ On the other hand, from Proposition
$\ref{prop3}$ the integral $\int_{1}^{\infty}\left\vert \frac{d}{dx}%
\Psi_{\gamma_{0},\alpha}^{\left(  1\right)  }\left(  x\right)  \right\vert dx$
is finite, it follows that $\sqrt{k}T_{k,2}^{\left(  1,1\right)
}=o_{\mathbf{P}}\left(  1\right)  .$ Let us now consider the second term
$T_{k}^{\left(  1,2\right)  }$ which in turn may be rewritten into the sum
\begin{align*}
&  T_{k,1}^{\left(  1,2\right)  }%
\begin{array}
[c]{c}%
:=
\end{array}
\int_{1}^{\infty}\frac{\overline{F}\left(  U\left(  n/k\right)  x\right)
}{\overline{F}\left(  U\left(  n/k\right)  \right)  }\left(  \frac
{\overline{F}\left(  X_{n-k:n}x\right)  }{\overline{F}\left(  U\left(
n/k\right)  x\right)  }-\left(  \frac{X_{n-k:n}}{U\left(  n/k\right)
}\right)  ^{-1/\gamma_{0}}\right) \\
&
\ \ \ \ \ \ \ \ \ \ \ \ \ \ \ \ \ \ \ \ \ \ \ \ \ \ \ \ \ \ \ \ \ \ \ \ \ \ \ \ \ \ \ \ \ \ \ \ \ \ \ \ \ \ \ \ \ \times
J\left(  x^{-1/\gamma_{0}}\right)  d\Psi_{\gamma_{0},\alpha}^{\left(
1\right)  }\left(  x\right)
\end{align*}
and%
\[
T_{k,2}^{\left(  1,2\right)  }:=\left(  \left(  \frac{X_{n-k:n}}{U\left(
n/k\right)  }\right)  ^{-1/\gamma_{0}}-1\right)  \int_{1}^{\infty}%
\frac{\overline{F}\left(  U\left(  n/k\right)  x\right)  }{\overline{F}\left(
U\left(  n/k\right)  \right)  }J\left(  x^{-1/\gamma_{0}}\right)
d\Psi_{\gamma_{0},\alpha}^{\left(  1\right)  }\left(  x\right)  .
\]
Next we show that $\sqrt{k}T_{k,1}^{\left(  1,2\right)  }=o_{\mathbf{P}%
}\left(  1\right)  $ and$\sqrt{k}T_{k,2}^{\left(  1,2\right)  }$ leads to a
second Gaussian rv. To this end, we will apply the second-order Potter's
inequality to $\overline{F}$ corresponding to $\left(  \ref{second-order}%
\right)  ,$ saying that: for any $0<\epsilon<1,$ there exists $n_{0}%
=n_{0}\left(  \epsilon\right)  ,$ such that for all $n>n_{0}$ and $x\geq1$%
\[
\left\vert \frac{\overline{F}\left(  tx\right)  /\overline{F}\left(  t\right)
-x^{-1/\gamma_{0}}}{A\left(  t\right)  }-x^{-1/\gamma_{0}}\dfrac
{x^{\tau/\gamma_{0}}-1}{\tau\gamma_{0}}\right\vert \leq\epsilon x^{-1/\gamma
_{0}+\tau/\gamma_{0}+\epsilon},
\]
see, e.g., Proposition 4 together with Remark 1 in \cite{HJ11}. Let us set
$x_{n}:=X_{n-k:n}/U\left(  n/k\right)  $ and $t_{x}:=U\left(  n/k\right)  x$
then write%
\begin{align*}
\sqrt{k}T_{k,1}^{\left(  1,2\right)  }  &  =\int_{1}^{\infty}\sqrt{k}A\left(
t_{x}\right)  \frac{\overline{F}\left(  U\left(  n/k\right)  x\right)
}{\overline{F}\left(  U\left(  n/k\right)  \right)  }\\
&  \times\left(  \frac{\frac{\overline{F}\left(  t_{x}x_{n}\right)
}{\overline{F}\left(  t_{x}\right)  }-x_{n}^{-1/\gamma_{0}}}{A\left(
t_{x}\right)  }-x_{n}^{-1/\gamma_{0}}\frac{x_{n}^{\tau/\gamma_{0}}-1}%
{\tau\gamma_{0}}\right)  J\left(  x^{-1/\gamma_{0}}\right)  \frac{d}{dx}%
\Psi_{\gamma_{0},\alpha}^{\left(  1\right)  }\left(  x\right)  dx\\
&  +\int_{1}^{\infty}\sqrt{k}A\left(  t_{x}\right)  \frac{\overline{F}\left(
U\left(  n/k\right)  x\right)  }{\overline{F}\left(  U\left(  n/k\right)
\right)  }x_{n}^{-1/\gamma_{0}}\frac{x_{n}^{\tau/\gamma_{0}}-1}{\tau\gamma
_{0}}J\left(  x^{-1/\gamma_{0}}\right)  \frac{d}{dx}\Psi_{\gamma_{0},\alpha
}^{\left(  1\right)  }\left(  x\right)  dx\\
&
\begin{array}
[c]{c}%
=:
\end{array}
L_{n1}+L_{n2}.
\end{align*}
By applying the aforementioned inequality, we easily show that
\[
L_{n1}=o_{\mathbf{P}}\left(  x_{n}^{-1/\gamma_{0}+\tau/\gamma_{0}+\epsilon
}\right)  \int_{1}^{\infty}\sqrt{k}A\left(  t_{x}\right)  \frac{\overline
{F}\left(  U\left(  n/k\right)  x\right)  }{\overline{F}\left(  U\left(
n/k\right)  \right)  }J\left(  x^{-1/\gamma_{0}}\right)  \left\vert \frac
{d}{dx}\Psi_{\gamma_{0},\alpha}^{\left(  1\right)  }\left(  x\right)
\right\vert dx.
\]
Next we use the first-order Potter's inequality to a regularly varying
function\textbf{\ }$V$ of negative index $\rho,$ that is: for every\textbf{\ }%
$0<\epsilon<1,$\textbf{\ }there exists $t_{0}=t_{0}\left(  \epsilon\right)  ,$
such that for $t>t_{0}$ and $x\geq1$%
\begin{equation}
\left\vert V\left(  tx\right)  /V\left(  t\right)  -x^{-\rho}\right\vert
<\epsilon x^{\rho+\epsilon}, \label{first-order-PotterF}%
\end{equation}
see for instance Proposition B.1.10 in \cite{deHF06} page 369. Note that
$X_{n-k:n}/U\left(  n/k\right)  \overset{\mathbf{P}}{\rightarrow}1,$ as
$n\rightarrow\infty,$ then by applying this inequality both to $\overline{F} $
and $\left\vert A\right\vert ,$ we get%
\[
L_{n1}=o_{\mathbf{P}}\left(  \sqrt{k}a\left(  n/k\right)  \right)  \int%
_{1}^{\infty}x^{-1/\gamma_{0}}J\left(  x^{-1/\gamma_{0}}\right)  \left\vert
\frac{d}{dx}\Psi_{\gamma_{0},\alpha}^{\left(  1\right)  }\left(  x\right)
\right\vert dx.
\]
where $a\left(  n/k\right)  :=A\left(  U\left(  n/k\right)  \right)  .$ We
already mentioned before that the previous integral is finite and on the other
hand $\sqrt{k}a\left(  n/k\right)  =O\left(  1\right)  ,$ it follows that
$L_{n1}=o_{\mathbf{P}}\left(  1\right)  .$ In view of the above two
inequalities, we show readily that
\begin{align*}
L_{n2}  &  =\left(  1+o_{\mathbf{P}}\left(  1\right)  \right)  \sqrt
{k}a\left(  n/k\right)  x_{n}^{-1/\gamma_{0}}\frac{x_{n}^{\tau/\gamma_{0}}%
-1}{\tau\gamma_{0}}\\
&  \ \ \ \ \ \ \ \ \ \ \ \ \ \ \ \ \ \ \ \ \ \ \times\int_{1}^{\infty
}x^{-1/\gamma_{0}}J\left(  x^{-1/\gamma_{0}}\right)  \left\vert \frac{d}%
{dx}\Psi_{\gamma_{0},\alpha}^{\left(  1\right)  }\left(  x\right)  \right\vert
dx.
\end{align*}
Since $x_{n}\overset{\mathbf{P}}{\rightarrow}1,$ as $n\rightarrow\infty,$ it
follows $x_{n}^{\tau/\gamma_{0}}-1\overset{\mathbf{P}}{\rightarrow}0,$
therefore $L_{n2}=o_{\mathbf{P}}\left(  1\right)  $ as well, thus $\sqrt
{k}T_{k,1}^{\left(  1,2\right)  }=o_{\mathbf{P}}\left(  1\right)  .$ By using
the routine manipulations of the first-order Potter's inequality above,
corresponding to $\overline{F},$ we end up with
\[
T_{k,2}^{\left(  1,2\right)  }=\left(  1+o_{\mathbf{P}}\left(  1\right)
\right)  \left(  \left(  \frac{X_{n-k:n}}{U\left(  n/k\right)  }\right)
^{-1/\gamma_{0}}-1\right)  \int_{1}^{\infty}x^{-1/\gamma_{0}}J\left(
x^{-1/\gamma_{0}}\right)  d\Psi_{\gamma_{0},\alpha}^{\left(  1\right)
}\left(  x\right)  .
\]
By adopting similar procedures as used in the proof of the first assertion of
Theorem 1 of\textbf{\ }\cite{BMN-2015} and given in \cite{BchMN16}%
\textbf{\ }(page 243)\textbf{, }we show that
\[
\sqrt{k}\left(  \frac{X_{n-k:n}}{U\left(  n/k\right)  }\right)  ^{-1/\gamma
_{0}}-1=-W\left(  k/n\right)  +o_{\mathbf{P}}\left(  1\right)  .
\]
In summary, until now, we managed to show that%
\begin{align*}
&  -\left(  1+\frac{1}{\alpha}\right)  ^{-1}\sqrt{k}\pi_{k}^{\left(  1\right)
}\left(  \gamma_{0}\right) \\
&  =\int_{1}^{\infty}\left(  W\left(  x^{-1/\gamma_{0}}\right)  -x^{-1/\gamma
_{0}}W\left(  k/n\right)  \right)  J\left(  x^{-1/\gamma_{0}}\right)
d\Psi_{\gamma_{0},\alpha}^{\left(  1\right)  }\left(  x\right)  +\sqrt{k}%
T_{k}^{\left(  1,3\right)  }+\sqrt{k}R_{k}^{\left(  1\right)  }.
\end{align*}
Next we take care of $\sqrt{k}T_{k}^{\left(  1,3\right)  }$ which represents
the asymptotic bias of the latter Gaussian approximation, while $\sqrt{k}%
R_{k}^{\left(  1\right)  }$ being an asymptotically negligible term. Note that
$\overline{F}\left(  U\left(  n/k\right)  \right)  =k/n$ and write%
\[
T_{k}^{\left(  1,3\right)  }=\int_{1}^{\infty}\left(  \frac{\overline
{F}\left(  U\left(  n/k\right)  x\right)  }{\overline{F}\left(  U\left(
n/k\right)  \right)  }-x^{-1/\gamma_{0}}\right)  J\left(  x^{-1/\gamma_{0}%
}\right)  d\Psi_{\gamma_{0},\alpha}^{\left(  1\right)  }\left(  x\right)  .
\]
Making use of inequality $\left(  \ref{AB}\right)  ,$ we end up with
\[
\sqrt{k}T_{k}^{\left(  1,3\right)  }=\left(  1+o_{\mathbf{P}}\left(  1\right)
\right)  \sqrt{k}a\left(  n/k\right)  B_{\gamma_{0},\alpha}^{\left(  1\right)
},
\]
where $B_{\gamma_{0},\alpha}^{\left(  1\right)  }$ is as in Theorem
$\ref{Theorem2}.$ Recall that the integral $B_{\gamma_{0},\alpha}^{\left(
1\right)  }$ is finite and $\sqrt{k}a\left(  n/k\right)  $ is asymptotically
bounded, thereby $\sqrt{k}T_{k}^{\left(  1,3\right)  }=\sqrt{k}a\left(
n/k\right)  B_{\gamma_{0},\alpha}^{\left(  1\right)  }+o_{\mathbf{P}}\left(
1\right)  .$ To finish observe that by using fact that $D_{k}\left(  x\right)
=O_{\mathbf{P}}\left(  1\right)  $ and the regularity assumptions on function
$J,$ we show that $\sqrt{k}R_{k}^{\left(  1\right)  }=o_{\mathbf{P}}\left(
1\right)  ,$ therefore we omit the details. In summary, we showed that%
\begin{align*}
&  -\left(  1+\frac{1}{\alpha}\right)  ^{-1}\sqrt{k}\pi_{k}^{\left(  1\right)
}\left(  \gamma_{0}\right) \\
&  =\int_{1}^{\infty}\left(  W\left(  x^{-1/\gamma_{0}}\right)  -x^{-1/\gamma
_{0}}W\left(  1\right)  \right)  J\left(  x^{-1/\gamma_{0}}\right)
d\Psi_{\gamma_{0},\alpha}^{\left(  1\right)  }\left(  x\right)  +\sqrt
{k}a\left(  n/k\right)  B_{\gamma_{0},\alpha}^{\left(  1\right)
}+o_{\mathbf{P}}\left(  1\right)  ,
\end{align*}
thus $\left(  \ref{app1}\right)  $ comes. Let us now prove assertion $\left(
\ref{app2}\right)  $. From Proposition $\ref{prop1},$ we have%
\[
\int_{1}^{\infty}\Psi_{\gamma_{0},\alpha+1}^{\left(  2\right)  }\left(
x\right)  dx=\left(  1+\frac{1}{\alpha}\right)  \int_{1}^{\infty}\ell
_{\gamma_{0}}\left(  x\right)  \Psi_{\gamma_{0},\alpha}^{\left(  2\right)
}\left(  x\right)  dx+\eta_{\gamma_{0},\alpha},
\]
it follows that%
\[
\left(  1+\frac{1}{\alpha}\right)  ^{-1}\left(  \pi_{k}^{\left(  2\right)
}\left(  \gamma_{0}\right)  -\eta_{\gamma_{0},\alpha}\right)  =\int%
_{1}^{\infty}\ell_{\gamma_{0}}\left(  x\right)  \Psi_{\gamma_{0},\alpha
}^{\left(  2\right)  }\left(  x\right)  dx-A_{k}^{\left(  2\right)  }\left(
\gamma_{0}\right)  .
\]
By using similar arguments as used in the proof of assertion $\left(
\ref{app1}\right)  ,$ we also show that the right side of the previous
equation equals
\[
\int_{1}^{\infty}\left(  W\left(  x^{-1/\gamma_{0}}\right)  -x^{-1/\gamma_{0}%
}W\left(  1\right)  \right)  J\left(  x^{-1/\gamma_{0}}\right)  d\Psi
_{\gamma_{0},\alpha}^{\left(  2\right)  }\left(  x\right)  +\lambda
B_{\gamma_{0},\alpha}^{\left(  2\right)  }+o_{\mathbf{P}}\left(  1\right)  ,
\]
thus $\left(  \ref{app2}\right)  $ holds too. To show assertion $\left(
\ref{consist}\right)  ,$ let us first note that
\begin{align*}
I_{\gamma_{0},\alpha}  &  :=\mathbf{E}\left\vert \int_{1}^{\infty}\left(
W\left(  x^{-1/\gamma_{0}}\right)  -x^{-1/\gamma_{0}}W\left(  1\right)
\right)  J\left(  x^{-1/\gamma_{0}}\right)  d\Psi_{\gamma_{0},\alpha}^{\left(
2\right)  }\left(  x\right)  \right\vert \\
&  \leq\int_{1}^{\infty}\mathbf{E}\left\vert W\left(  x^{-1/\gamma_{0}%
}\right)  -x^{-1/\gamma_{0}}W\left(  1\right)  \right\vert J\left(
x^{-1/\gamma_{0}}\right)  \left\vert \frac{d\Psi_{\gamma_{0},\alpha}^{\left(
2\right)  }\left(  x\right)  }{dx}\right\vert dx,
\end{align*}
Note that $B\left(  s\right)  :=W\left(  s\right)  -sW\left(  1\right)  $
being a Brownian bridge, then
\[
\mathbf{E}\left\vert B\left(  x^{-1/\gamma_{0}}\right)  \right\vert \leq
x^{-1/\left(  2\gamma_{0}\right)  }\leq1,\text{ for }x\geq1,
\]
From Proposition $\ref{prop3},$ $\int_{1}^{\infty}\left\vert d\Psi_{\gamma
_{0},\alpha}^{\left(  2\right)  }\left(  x\right)  /dx\right\vert dx$ is
finite, therefore%
\[
\sqrt{k}\pi_{k}^{\left(  1\right)  }\left(  \gamma_{0}\right)  =O_{\mathbf{P}%
}\left(  1\right)  =\sqrt{k}\left(  \pi_{k}^{\left(  2\right)  }\left(
\gamma_{0}\right)  -\eta_{\gamma_{0},\alpha}\right)  ,
\]
thus $\pi_{k}^{\left(  1\right)  }\left(  \gamma_{0}\right)  =o_{\mathbf{P}%
}\left(  1\right)  =\left(  \pi_{k}^{\left(  2\right)  }\left(  \gamma
_{0}\right)  -\eta_{\gamma_{0},\alpha}\right)  ,$ because $k^{-1}%
\rightarrow0,$ as $n\rightarrow\infty$ as sought.
\end{proof}

\begin{lemma}
\textbf{\label{lemma4}}Assume that assumptions $\left[  A1\right]  -\left[
A3\right]  ,$ then fo given a consistent estimator $\widehat{\gamma}$ of
$\gamma_{0},$ we have $\pi_{k}^{\left(  3\right)  }\left(  \widehat{\gamma
}\right)  =O_{\mathbf{P}}\left(  1\right)  ,$ provided that $\alpha>0.$
\end{lemma}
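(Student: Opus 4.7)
The plan is to reduce the claim to a uniform bound over a small deterministic neighborhood $I_\delta := (\gamma_0 - \delta, \gamma_0 + \delta) \subset (0,\infty)$ of the true value $\gamma_0$. Since $\widehat{\gamma} \xrightarrow{\mathbf{P}} \gamma_0$, we have $\mathbf{P}(\widehat{\gamma} \in I_\delta) \to 1$, and on that event $|\pi_{k}^{(3)}(\widehat{\gamma})| \leq \sup_{\gamma \in I_\delta} |\pi_{k}^{(3)}(\gamma)|$. Thus it suffices to show that this supremum is $O_{\mathbf{P}}(1)$, and in fact my plan will give a deterministic bound once the sample is restricted to a set of probability tending to $1$.

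First I would derive a convenient explicit form of $\Psi_{\gamma,\alpha}^{(m)}(x)$ for $m=1,2,3$. Using $\log \ell_{\gamma,J}^{\alpha}(x) = \alpha[\log J(x^{-1/\gamma}) - \log\gamma - (1+1/\gamma)\log x]$ together with $\tfrac{d}{d\gamma}x^{-1/\gamma} = (\log x/\gamma^{2})x^{-1/\gamma}$ and the identity $\mathcal{L}(s) = (J^{(1)}(s)/J(s))\log s$, the first logarithmic derivative reads
\[
\frac{d}{d\gamma}\log\ell_{\gamma,J}^{\alpha}(x) = \frac{\alpha}{\gamma}\Bigl[-\mathcal{L}(x^{-1/\gamma})\,x^{-1/\gamma} - 1 + \frac{\log x}{\gamma}\Bigr].
\]
Iterating the differentiation twice more, each $\Psi_{\gamma,\alpha}^{(m)}(x)$ factors as $\ell_{\gamma,J}^{\alpha}(x)$ times a finite sum of monomials in $\log x$, powers of $1/\gamma$, and the bounded functions $\mathcal{L}, \mathcal{L}^{(1)}, \mathcal{L}^{(2)}, \mathcal{L}^{(3)}$ evaluated at $x^{-1/\gamma}$. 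Under $[A2]$--$[A3]$ all these are bounded on $(0,1)$, so that for every $\gamma \in I_\delta$ and $x \geq 1$,
\[
|\Psi_{\gamma,\alpha}^{(3)}(x)| \leq C\,\ell_{\gamma,J}^{\alpha}(x)\bigl(1+|\log x|^{3}\bigr) \leq C'\,x^{-\alpha(1+1/\gamma)}\bigl(1+|\log x|^{3}\bigr),
\]
and similarly for $\Psi_{\gamma,\alpha+1}^{(3)}$ with exponent $(\alpha+1)(1+1/\gamma)$. For $\delta$ small enough these exponents are bounded away from $0$ uniformly over $I_\delta$, so $\int_{1}^{\infty} |\Psi_{\gamma,\alpha+1}^{(3)}(x)|\,dx \leq K$ for all $\gamma \in I_\delta$, handling the first term of $\pi_{k}^{(3)}(\gamma)$.

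For the empirical sum $A_{k}^{(3)}(\gamma)$, I would mimic the device used in the proofs of Lemmas \ref{lemma2} and \ref{lemma3}: rewrite
\[
A_{k}^{(3)}(\gamma) = -\int_{1}^{\infty}\Psi_{\gamma,\alpha}^{(3)}(x)\,d\varphi\!\left(\frac{\overline{F}_{n}(X_{n-k:n}x)}{\overline{F}_{n}(X_{n-k:n})}\right), \qquad \varphi(v) := \int_{0}^{v}J(t)\,dt,
\]
and integrate by parts to obtain
\[
A_{k}^{(3)}(\gamma) = \Psi_{\gamma,\alpha}^{(3)}(1) + \int_{1}^{\infty} \varphi\!\left(\frac{\overline{F}_{n}(X_{n-k:n}x)}{\overline{F}_{n}(X_{n-k:n})}\right) d\Psi_{\gamma,\alpha}^{(3)}(x).
\]
Since $0 \leq \varphi \leq 1$ by $[A1]$ and, by the same pointwise estimate applied to $(d/dx)\Psi_{\gamma,\alpha}^{(3)}$, the total variation $\int_{1}^{\infty}|d\Psi_{\gamma,\alpha}^{(3)}(x)|$ is finite and bounded uniformly in $\gamma \in I_\delta$, it follows that $\sup_{\gamma \in I_\delta}|A_{k}^{(3)}(\gamma)| \leq K'$ deterministically. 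Combining with the bound on the first term yields $\sup_{\gamma \in I_\delta}|\pi_{k}^{(3)}(\gamma)| = O(1)$, which together with $\mathbf{P}(\widehat{\gamma} \in I_\delta)\to 1$ gives $\pi_{k}^{(3)}(\widehat{\gamma}) = O_{\mathbf{P}}(1)$.

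The main obstacle is the bookkeeping in the first step: computing $\Psi_{\gamma,\alpha}^{(3)}$ and its $x$-derivative explicitly enough to verify that every term is controlled by $x^{-\alpha(1+1/\gamma)}$ times a polynomial of degree at most three in $\log x$, so that $[A3]$ can absorb all the coefficient functions. Once that explicit bound is obtained, the uniform integrability and the integration-by-parts argument proceed exactly as in Lemmas \ref{lemma2} and \ref{lemma3}, and no delicate probabilistic estimates are needed beyond the consistency of $\widehat{\gamma}$.
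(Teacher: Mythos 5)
Your proposal is correct, but it takes a genuinely different route from the paper's. The paper argues in two steps: it first shows $\pi_{k}^{(3)}(\gamma_{0})=O_{\mathbf{P}}(1)$ --- in fact that $\pi_{k}^{(3)}(\gamma_{0})\overset{\mathbf{P}}{\rightarrow}\int_{1}^{\infty}g_{\gamma_{0},\alpha}(x)\,dx<\infty$, using the recursion of Proposition \ref{prop1} and the same empirical-process argument as Lemma \ref{lemma2} --- and then transfers the bound to $\widehat{\gamma}$ by expanding $\pi_{k}^{(3)}(\widehat{\gamma})-\pi_{k}^{(3)}(\gamma_{0})$ with the mean value theorem, which brings in the fourth derivatives $\Psi_{\gamma,\alpha}^{(4)}$ and hence Proposition \ref{prop2} at order $m=4$. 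You instead localize: consistency of $\widehat{\gamma}$ is used only to place $\widehat{\gamma}$ in a deterministic compact neighborhood $I_{\delta}=(\gamma_{0}-\delta,\gamma_{0}+\delta)$ with probability tending to one, and on that event you dominate $|\pi_{k}^{(3)}(\widehat{\gamma})|$ by $\sup_{\gamma\in I_{\delta}}|\pi_{k}^{(3)}(\gamma)|$, which you bound by a deterministic constant via $0\leq\varphi\leq1$, the integration-by-parts device of Lemma \ref{lemma2}, and uniform-over-$I_{\delta}$ versions of the integrability estimates. Your route is leaner: it never needs $\Psi^{(4)}$, it replaces the convergence $A_{k}^{(3)}(\gamma_{0})\overset{\mathbf{P}}{\rightarrow}0$ (a limit the lemma does not actually require) by a crude uniform bound, and no Taylor step is needed. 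What the paper's approach buys in exchange is the sharper statement $\pi_{k}^{(3)}(\widehat{\gamma})=\pi_{k}^{(3)}(\gamma_{0})+o_{\mathbf{P}}(1)$, mirroring the Lehmann--Casella likelihood argument it adapts; what yours requires is that the bounds of Propositions \ref{prop2} and \ref{prop3} hold uniformly in $\gamma$ over $I_{\delta}$, which is harmless since all constants there depend continuously on $\gamma$ through powers of $\gamma^{-1}$ and the $\gamma$-free sup-norms of $J$, $\mathcal{L}$ and their derivatives.

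Two small points should be repaired in the write-up, though neither threatens the argument. First, for $\int_{1}^{\infty}|\Psi_{\gamma,\alpha+1}^{(3)}(x)|\,dx$ against a factor $(1+\log^{3}x)$ you need the exponent $(\alpha+1)(1+1/\gamma)$ to exceed $1$, not merely to be \emph{bounded away from }$0$ as you wrote; this holds automatically and uniformly on $I_{\delta}$ since $(\alpha+1)(1+1/\gamma)>1+\alpha>1$, but it should be stated that way. Second, the $x$-derivative of $\Psi_{\gamma,\alpha}^{(3)}$ does not literally satisfy ``the same pointwise estimate'': differentiating $\ell_{\gamma,J}^{\alpha}$ in $x$ produces terms proportional to $J^{\alpha-1}(x^{-1/\gamma})J^{(1)}(x^{-1/\gamma})$, which for $\alpha<1$ and weights vanishing at $1$ (e.g.\ $J_{2}$, for which this factor behaves like $(x-1)^{2\alpha-1}$ as $x\downarrow1$) are unbounded near $x=1$. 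The total variation $\int_{1}^{\infty}|d\Psi_{\gamma,\alpha}^{(3)}(x)|$ is nevertheless finite for every $\alpha>0$, because this singularity is integrable, while away from $x=1$ your $x^{-\alpha(1+1/\gamma)-1}$-type decay (with $\log$ powers, and with $\mathcal{L},\dots,\mathcal{L}^{(3)}$ bounded by $\left[A3\right]$ absorbing the chain-rule factors) does hold. This is exactly the delicate spot that Propositions \ref{prop3bis} and \ref{prop3} also pass over lightly, so once noted, your proof stands at the same level of rigor as the paper's.
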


\begin{proof}
Let us first show that $\pi_{k}^{\left(  3\right)  }\left(  \gamma_{0}\right)
=O_{\mathbf{P}}\left(  1\right)  $ and recall that
\[
\pi_{k}^{\left(  3\right)  }\left(  \gamma_{0}\right)  =\int_{1}^{\infty}%
\Psi_{\gamma_{0},\alpha+1}^{\left(  3\right)  }\left(  x\right)  dx-\left(
1+\frac{1}{\alpha}\right)  \frac{1}{k}\sum_{i=1}^{k}J\left(  \frac{i}%
{k}\right)  \Psi_{\gamma_{0},\alpha}^{\left(  3\right)  }\left(
\frac{X_{n-i+1:n}}{X_{n-k:n}}\right)  .
\]
Making use of Proposition $\ref{prop1},$ we may rewrite $\pi_{k}^{\left(
3\right)  }\left(  \gamma_{0}\right)  $ into the sum of $\int_{1}^{\infty
}g_{\gamma_{0},\alpha}\left(  x\right)  dx$ and $\left(  1+\frac{1}{\alpha
}\right)  A_{k}^{\left(  3\right)  }\left(  \gamma_{0}\right)  ,$ where
\[
A_{k}^{\left(  3\right)  }\left(  \gamma_{0}\right)  :=\int_{1}^{\infty}%
\dfrac{d}{d\gamma}\ell_{\gamma_{0},J}\left(  x\right)  \Psi_{\gamma_{0}%
,\alpha}^{\left(  3\right)  }\left(  x\right)  dx-\frac{1}{k}\sum_{i=1}%
^{k}J\left(  \frac{i}{k}\right)  \Psi_{\gamma_{0},\alpha}^{\left(  3\right)
}\left(  \frac{X_{n-i+1:n}}{X_{n-k:n}}\right)  .
\]
By similar arguments as those used in the proof of Lemma $\ref{lemma2},$ we
also show that $A_{k}^{\left(  3\right)  }\left(  \gamma_{0}\right)
\overset{\mathbf{P}}{\rightarrow}0,$ so $\pi_{k}^{\left(  3\right)  }\left(
\gamma_{0}\right)  \overset{\mathbf{P}}{\rightarrow}\int_{1}^{\infty}%
g_{\gamma_{0},\alpha}\left(  x\right)  dx<\infty.$ From the third assertion of
Proposition $\ref{prop1},$ we have%
\[
g_{\gamma_{0},\alpha}\left(  x\right)  =\Psi_{\gamma_{0},\alpha+1}^{\left(
3\right)  }\left(  x\right)  -\left(  1+\frac{1}{\alpha}\right)  \ell
_{\gamma,J}\left(  x\right)  \Psi_{\gamma_{0},\alpha}^{\left(  3\right)
}\left(  x\right)  .
\]
By applying Proposition $\ref{prop2},$ we infer that $\int_{1}^{\infty
}\left\vert g_{\gamma_{0},\alpha}\left(  x\right)  \right\vert dx<\infty,$
thus $\pi_{k}^{\left(  3\right)  }\left(  \gamma_{0}\right)  =O_{\mathbf{P}%
}\left(  1\right)  .$ Next we prove that $\pi_{k}^{\left(  3\right)  }\left(
\widehat{\gamma}_{0}\right)  -\pi_{k}^{\left(  3\right)  }\left(  \gamma
_{0}\right)  =o_{\mathbf{P}}\left(  1\right)  ,$ as $n\rightarrow\infty.$
Indeed, let us write
\begin{align*}
&  \pi_{k}^{\left(  3\right)  }\left(  \widehat{\gamma}\right)  -\pi
_{k}^{\left(  3\right)  }\left(  \gamma_{0}\right)  \medskip\\
&  =\int_{1}^{\infty}\left\{  \Psi_{\widehat{\gamma},\alpha+1}^{\left(
3\right)  }\left(  x\right)  -\Psi_{\gamma_{0},\alpha+1}^{\left(  3\right)
}\left(  x\right)  \right\}  dx\medskip\\
&  -\left(  1+\frac{1}{\alpha}\right)  \frac{1}{k}\sum_{i=1}^{k}J\left(
\frac{i}{k}\right)  \left\{  \Psi_{\widehat{\gamma},\alpha}^{\left(  3\right)
}\left(  \frac{X_{n-i+1:n}}{X_{n-k:n}}\right)  -\Psi_{\gamma_{0},\alpha
}^{\left(  3\right)  }\left(  \frac{X_{n-i+1:n}}{X_{n-k:n}}\right)  \right\}
.
\end{align*}
Using the mean value theorem, yields%
\[
\int_{1}^{\infty}\left\{  \Psi_{\widehat{\gamma},\alpha+1}^{\left(  3\right)
}\left(  x\right)  -\Psi_{\gamma_{0},\alpha+1}^{\left(  3\right)  }\left(
x\right)  \right\}  dx=\left(  \widehat{\gamma}-\gamma_{0}\right)
\Psi_{\widehat{\gamma}_{0}^{\ast},\alpha+1}^{\left(  4\right)  }\left(
x\right)  \int_{1}^{\infty}\Psi_{\widehat{\gamma}_{0}^{\ast},\alpha
+1}^{\left(  4\right)  }\left(  x\right)  dx,
\]
where $\widehat{\gamma}_{0}^{\ast}$ is between $\widehat{\gamma}$ and
$\gamma.$ Recall that $\widehat{\gamma}\overset{\mathbf{P}}{\rightarrow}%
\gamma_{0}$ and by Proposition $\ref{prop2}$
\[
\int_{1}^{\infty}\left\vert \Psi_{\widehat{\gamma}_{0}^{\ast},\alpha
+1}^{\left(  4\right)  }\left(  x\right)  \right\vert dx=O_{\mathbf{P}}\left(
1\right)  ,
\]
thus $\int_{1}^{\infty}\left\{  \Psi_{\widehat{\gamma},\alpha+1}^{\left(
3\right)  }\left(  x\right)  -\Psi_{\gamma_{0},\alpha+1}^{\left(  3\right)
}\left(  x\right)  \right\}  dx=o_{\mathbf{P}}\left(  1\right)  .$ On the
other hand
\[%
\begin{array}
[c]{cl}
& \dfrac{1}{k}%
{\displaystyle\sum\limits_{i=1}^{k}}
J\left(  \dfrac{i}{k}\right)  \left\{  \Psi_{\widehat{\gamma},\alpha}^{\left(
3\right)  }\left(  \dfrac{X_{n-i+1:n}}{X_{n-k:n}}\right)  -\Psi_{\gamma
_{0},\alpha}^{\left(  3\right)  }\left(  \dfrac{X_{n-i+1:n}}{X_{n-k:n}%
}\right)  \right\}  \bigskip\\
= &
{\displaystyle\int_{1}^{\infty}}
J\left(  \dfrac{\overline{F}_{n}\left(  xX_{n-k:n}\right)  }{\overline{F}%
_{n}\left(  X_{n-k:n}\right)  }\right)  \left\{  \Psi_{\widehat{\gamma}%
,\alpha}^{\left(  3\right)  }\left(  x\right)  -\Psi_{\gamma_{0},\alpha
}^{\left(  3\right)  }\left(  x\right)  \right\}  d\dfrac{F_{n}\left(
xX_{n-k:n}\right)  }{\overline{F}_{n}\left(  X_{n-k:n}\right)  }.
\end{array}
\]
Once again making use of the mean value theorem, we write%
\[
\left(  \widehat{\gamma}-\gamma_{0}\right)  \int_{1}^{\infty}J\left(
\frac{\overline{F}_{n}\left(  xX_{n-k:n}\right)  }{\overline{F}_{n}\left(
X_{n-k:n}\right)  }\right)  \Psi_{\overline{\gamma}_{0}^{\ast},\alpha
}^{\left(  4\right)  }\left(  x\right)  d\frac{F_{n}\left(  xX_{n-k:n}\right)
}{\overline{F}_{n}\left(  X_{n-k:n}\right)  },
\]
where $\overline{\gamma}_{0}^{\ast}$ is between $\widehat{\gamma}$ and
$\gamma_{0}.$ From Proposition $\ref{prop2},$ the function $x\rightarrow
\Psi_{\overline{\gamma}_{0}^{\ast},\alpha}^{\left(  4\right)  }\left(
x\right)  $ being bounded over $x>1,$ then the previous quantity equals
$o_{\mathbf{P}}\left(  1\right)  \int_{0}^{1}J\left(  s\right)
ds=o_{\mathbf{P}}\left(  1\right)  .$ Thus we showed that $\pi_{k}^{\left(
3\right)  }\left(  \widehat{\gamma}\right)  -\pi_{k}^{\left(  3\right)
}\left(  \gamma_{0}\right)  =o_{\mathbf{P}}\left(  1\right)  ,$ leading to
$\pi_{k}^{\left(  3\right)  }\left(  \widehat{\gamma}\right)  =O_{\mathbf{P}%
}\left(  1\right)  $ as well.
\end{proof}

\begin{proposition}
\textbf{\label{prop1}}For every $\alpha>0,$ we have%
\[
\Psi_{\gamma,\alpha+1}^{\left(  1\right)  }\left(  x\right)  =\left(
1+\frac{1}{\alpha}\right)  \ell_{\gamma,J}\left(  x\right)  \Psi
_{\gamma,\alpha}^{\left(  1\right)  }\left(  x\right)  ,
\]%
\[
\Psi_{\gamma,\alpha+1}^{\left(  2\right)  }\left(  x\right)  =\left(
1+\frac{1}{\alpha}\right)  \ell_{\gamma,J}\left(  x\right)  \Psi
_{\gamma,\alpha}^{\left(  2\right)  }\left(  x\right)  +\left(  1+\alpha
\right)  \left(  \Psi_{\gamma,1}^{\left(  1\right)  }\left(  x\right)
\right)  ^{2}\ell_{\gamma,J}^{\alpha-1}\left(  x\right)
\]
and%
\[
\Psi_{\gamma,\alpha+1}^{\left(  3\right)  }\left(  x\right)  =\left(
1+\frac{1}{\alpha}\right)  \ell_{\gamma,J}\left(  x\right)  \Psi
_{\gamma,\alpha}^{\left(  3\right)  }\left(  x\right)  +g_{\gamma,\alpha
}\left(  x\right)  ,\text{ }%
\]
for some integrable function $x\rightarrow g_{\gamma,\alpha}\left(  x\right)
.$
\end{proposition}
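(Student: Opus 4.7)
My plan is purely computational: all three identities follow from iterating the product rule on the factorization $\ell_{\gamma,J}^{\alpha+1}(x)=\ell_{\gamma,J}(x)\cdot\ell_{\gamma,J}^{\alpha}(x)$, combined with the elementary identity $\tfrac{d}{d\gamma}\ell_{\gamma,J}^{\alpha+1}=\tfrac{\alpha+1}{\alpha}\,\ell_{\gamma,J}\,\tfrac{d}{d\gamma}\ell_{\gamma,J}^{\alpha}$, which is obtained by writing $(\alpha+1)\ell_{\gamma,J}^{\alpha}\,\partial_{\gamma}\ell_{\gamma,J}=\tfrac{\alpha+1}{\alpha}\ell_{\gamma,J}\cdot\alpha\ell_{\gamma,J}^{\alpha-1}\partial_{\gamma}\ell_{\gamma,J}$. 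This already gives the first identity. For the second and third, I would simply differentiate this recurrence once and twice with respect to $\gamma$ using Leibniz's rule, and then rewrite the resulting cross terms so that $\Psi_{\gamma,\alpha}^{(m)}$ and $\Psi_{\gamma,1}^{(m)}$ appear explicitly.

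Concretely, differentiating the first identity once yields
\[
\Psi_{\gamma,\alpha+1}^{(2)}=\bigl(1+\tfrac{1}{\alpha}\bigr)\ell_{\gamma,J}\,\Psi_{\gamma,\alpha}^{(2)}+\bigl(1+\tfrac{1}{\alpha}\bigr)\Psi_{\gamma,1}^{(1)}\,\Psi_{\gamma,\alpha}^{(1)},
\]
and substituting $\Psi_{\gamma,\alpha}^{(1)}=\alpha\ell_{\gamma,J}^{\alpha-1}\Psi_{\gamma,1}^{(1)}$ collapses the second term to $(1+\alpha)(\Psi_{\gamma,1}^{(1)})^{2}\ell_{\gamma,J}^{\alpha-1}$, giving exactly the second identity. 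Differentiating once more produces
\[
g_{\gamma,\alpha}(x)=\bigl(1+\tfrac{1}{\alpha}\bigr)\bigl[2\Psi_{\gamma,1}^{(1)}\Psi_{\gamma,\alpha}^{(2)}+\Psi_{\gamma,1}^{(2)}\Psi_{\gamma,\alpha}^{(1)}\bigr],
\]
which, upon inserting the explicit formulas for $\Psi_{\gamma,\alpha}^{(1)},\Psi_{\gamma,\alpha}^{(2)}$, becomes
\[
g_{\gamma,\alpha}(x)=3(\alpha+1)\ell_{\gamma,J}^{\alpha-1}(x)\,\Psi_{\gamma,1}^{(1)}(x)\Psi_{\gamma,1}^{(2)}(x)+2(\alpha^{2}-1)\ell_{\gamma,J}^{\alpha-2}(x)\bigl(\Psi_{\gamma,1}^{(1)}(x)\bigr)^{3}.
\]
Thus the algebraic content of the proposition reduces to two applications of Leibniz and a routine simplification.

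The one nontrivial step will be the integrability of $g_{\gamma,\alpha}$. Using $\ell_{\gamma,J}(x)=\gamma^{-1}x^{-1-1/\gamma}J(x^{-1/\gamma})$, a direct differentiation with respect to $\gamma$ expresses each $\Psi_{\gamma,1}^{(m)}$ as $\ell_{\gamma,J}(x)$ times a polynomial in $\log x$, $x^{-1/\gamma}$, and the quantities $\mathcal{L}(x^{-1/\gamma})$ and $\mathcal{L}^{(j)}(x^{-1/\gamma})$ for $j\le m-1$. Under $[A1]$-$[A3]$, all these factors are bounded on $(1,\infty)$. The tail behavior $x\to\infty$ is therefore controlled by a power $x^{-1-\alpha/\gamma}$ times polynomial factors in $\log x$, which is integrable. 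Near $x=1$ the factor $J(x^{-1/\gamma})$ vanishes, so $\ell_{\gamma,J}^{\alpha-2}$ may blow up when $\alpha<2$; the main obstacle is showing that, after absorbing the $J$ factors from $\Psi_{\gamma,1}^{(1)}$ and $\Psi_{\gamma,1}^{(2)}$, each term still carries a net positive power of $J(x^{-1/\gamma})$, which compensates this singularity. I would verify this by factoring $J(x^{-1/\gamma})^{\alpha}$ out of the products $\ell_{\gamma,J}^{\alpha-2}(\Psi_{\gamma,1}^{(1)})^{3}$ and $\ell_{\gamma,J}^{\alpha-1}\Psi_{\gamma,1}^{(1)}\Psi_{\gamma,1}^{(2)}$, leaving a bounded factor built from $\mathcal{L}$ and $\mathcal{L}^{(1)}$; integrability then follows from $\int_{0}^{1}J(s)^{\alpha}ds<\infty$, which holds because $J$ is bounded by [A2]. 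This local argument at $x=1$ is where I anticipate the bulk of the technical work.
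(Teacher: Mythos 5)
Your proof is correct and, for the two exact identities, coincides in substance with the paper's computation (iterated product rule on the factorization $\ell_{\gamma,J}^{\alpha+1}=\ell_{\gamma,J}\cdot\ell_{\gamma,J}^{\alpha}$), though you obtain the second identity more economically by differentiating the first identity and substituting $\Psi_{\gamma,\alpha}^{(1)}=\alpha\ell_{\gamma,J}^{\alpha-1}\Psi_{\gamma,1}^{(1)}$, where the paper recomputes $\frac{d^{2}}{d\gamma^{2}}\ell_{\gamma}^{\alpha+1}$ from scratch. Two genuine differences are worth recording. First, your explicit formula
\[
g_{\gamma,\alpha}\left(  x\right)  =3\left(  \alpha+1\right)  \ell_{\gamma
,J}^{\alpha-1}\left(  x\right)  \Psi_{\gamma,1}^{\left(  1\right)  }\left(
x\right)  \Psi_{\gamma,1}^{\left(  2\right)  }\left(  x\right)  +2\left(
\alpha^{2}-1\right)  \ell_{\gamma,J}^{\alpha-2}\left(  x\right)  \left(
\Psi_{\gamma,1}^{\left(  1\right)  }\left(  x\right)  \right)  ^{3}
\]
is the correct one: each term is homogeneous of degree $\alpha+1$ in $\ell_{\gamma,J}$, as it must be since both $\Psi_{\gamma,\alpha+1}^{(3)}$ and $\ell_{\gamma,J}\Psi_{\gamma,\alpha}^{(3)}$ are. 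The explicit $g_{\gamma}$ displayed in the paper's proof, which contains the term $\ell_{\gamma,J}^{\alpha-2}\frac{d}{d\gamma}\ell_{\gamma,J}\left\{2\ell_{\gamma,J}\frac{d^{2}}{d\gamma^{2}}\ell_{\gamma,J}^{\alpha}+(\alpha-1)\left(\frac{d}{d\gamma}\ell_{\gamma,J}\right)^{2}\right\}$, is not homogeneous of that degree and appears garbled; this is harmless there because the paper never uses the explicit form, but your version is the one to trust. Second, the integrability arguments differ: the paper proceeds indirectly, invoking Proposition \ref{prop2} to get finiteness of both $\int_{1}^{\infty}\left\vert\Psi_{\gamma,\alpha+1}^{(3)}\right\vert dx$ and $\int_{1}^{\infty}\left\vert\ell_{\gamma,J}\Psi_{\gamma,\alpha}^{(3)}\right\vert dx$, so that $g_{\gamma,\alpha}$ is integrable as their difference, with no pointwise estimate on $g$; you estimate $g$ directly, which is self-contained. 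Note, however, that the step you flag as the main obstacle (possible blow-up of $\ell_{\gamma,J}^{\alpha-2}$ near $x=1$ when $J(1^{-})=0$) is automatic in your own factorization: each $\Psi_{\gamma,1}^{(m)}$ carries a full factor $\ell_{\gamma,J}$ times an expression that is bounded up to powers of $\log x$ (built from $\mathcal{L}$, $\mathcal{L}^{(1)}$ and $x^{-1/\gamma}$), so both terms of $g$ are $O\left(\ell_{\gamma,J}^{\alpha+1}(x)\log^{b}x\right)$, bounded near $x=1$ and integrable at infinity since $(\alpha+1)(1+1/\gamma)>1$; no appeal to $\int_{0}^{1}J^{\alpha}(s)ds<\infty$ is needed. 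A caveat shared with the paper: for $0<\alpha<1$ the manipulations with $\ell_{\gamma,J}^{\alpha-1}$ presuppose $J>0$ on $(0,1)$, which is implicit in assumption $[A3]$ through the ratio $J^{(1)}/J$.
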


\begin{proof}
Recall that $\Psi_{\gamma,\alpha}^{\left(  m\right)  }\left(  x\right)
:=d^{m}\ell_{\gamma,J}^{\alpha}\left(  x\right)  /d^{m}\gamma$ denotes the
values of $m$-th derivative of $\ell_{\gamma,J}^{\alpha},$ with respect to
$\gamma.$ The proof of the first identity is straightforward. Indeed%
\begin{align*}
\Psi_{\gamma,\alpha+1}^{\left(  1\right)  }\left(  x\right)   &  =\frac
{d}{d\gamma}\ell_{\gamma,J}^{\alpha+1}\left(  x\right)  =\left(
1+\alpha\right)  \ell_{\gamma,J}^{\alpha}\left(  x\right)  \frac{d}{d\gamma
}\ell_{\gamma,J}\left(  x\right)  \\
&  =\left(  1+\frac{1}{\alpha}\right)  \ell_{\gamma,J}\left(  x\right)
\frac{d}{d\gamma}\ell_{\gamma,J}^{\alpha}\left(  x\right)  =\left(  1+\frac
{1}{\alpha}\right)  \Psi_{\gamma,\alpha}^{\left(  1\right)  }\left(  x\right)
.
\end{align*}
For the second derivative, we compute:%
\begin{align*}
\Psi_{\gamma,\alpha+1}^{\left(  2\right)  }\left(  x\right)   &  =\frac{d^{2}%
}{d\gamma^{2}}\ell_{\gamma}^{\alpha+1}\left(  x\right)  =\frac{d}{d\gamma
}\left[  \left(  1+\alpha\right)  \frac{d}{d\gamma}\ell_{\gamma}\left(
x\right)  \ell_{\gamma}^{\alpha}\left(  x\right)  \right]  \\
&  =\left(  1+\alpha\right)  \left[  \frac{d^{2}}{d\gamma^{2}}\ell_{\gamma
}\left(  x\right)  \ell_{\gamma}^{\alpha}\left(  x\right)  +\alpha\left(
\frac{d}{d\gamma}\ell_{\gamma}\left(  x\right)  \right)  ^{2}\ell_{\gamma
}^{\alpha-1}\left(  x\right)  \right]  .
\end{align*}
On the other hand%
\[
\ell_{\gamma}\left(  x\right)  \frac{d^{2}}{d\gamma^{2}}\ell_{\gamma}^{\alpha
}\left(  x\right)  =\alpha\ell_{\gamma}\left(  x\right)  \left[  \frac{d^{2}%
}{d\gamma^{2}}\ell_{\gamma}\left(  x\right)  \ell_{\gamma}^{\alpha-1}\left(
x\right)  +\left(  \alpha-1\right)  \left(  \frac{d}{d\gamma}\ell_{\gamma
}\left(  x\right)  \right)  ^{2}\ell_{\gamma}^{\alpha-2}\left(  x\right)
\right]  ,
\]
it follows that%
\begin{align*}
&  \left(  1+\frac{1}{\alpha}\right)  \ell_{\gamma}\left(  x\right)
\frac{d^{2}}{d\gamma^{2}}\ell_{\gamma}^{\alpha}\left(  x\right)  \\
&  =\left(  \alpha+1\right)  \left[  \ell_{\gamma}\left(  x\right)
\frac{d^{2}}{d\gamma^{2}}\ell_{\gamma}\left(  x\right)  \ell_{\gamma}%
^{\alpha-1}\left(  x\right)  +\left(  \alpha-1\right)  \left(  \frac
{d}{d\gamma}\ell_{\gamma}\left(  x\right)  \right)  ^{2}\ell_{\gamma}%
^{\alpha-1}\left(  x\right)  \right]  .
\end{align*}
Thus%
\begin{align*}
\frac{d^{2}}{d\gamma^{2}}\ell_{\gamma}^{\alpha+1}\left(  x\right)   &
=\left(  1+\frac{1}{\alpha}\right)  \ell_{\gamma}\left(  x\right)  \frac
{d^{2}}{d\gamma^{2}}\ell_{\gamma}^{\alpha}\left(  x\right)  +\left(
1+\alpha\right)  \ell_{\gamma}^{\alpha-1}\left(  x\right)  \left(  \frac
{d}{d\gamma}\ell_{\gamma}\left(  x\right)  \right)  ^{2}\\
&  =\left(  1+\frac{1}{\alpha}\right)  \ell_{\gamma,J}\left(  x\right)
\Psi_{\gamma,\alpha}^{\left(  2\right)  }\left(  x\right)  +\left(
1+\alpha\right)  \left(  \Psi_{\gamma,1}^{\left(  1\right)  }\left(  x\right)
\right)  ^{2}\ell_{\gamma,J}^{\alpha-1}\left(  x\right)  ,
\end{align*}
which corresponds to the second assertion of the proposition. Let us now prove
the third assertion. It is easy to verify that:%
\begin{align*}
\Psi_{\gamma,\alpha+1}^{\left(  3\right)  }\left(  x\right)   &  =\frac{d^{3}%
}{d\gamma^{3}}\ell_{\gamma,J}^{\alpha+1}\left(  x\right)  =\left(  1+\frac
{1}{\alpha}\right)  \ell_{\gamma,J}\left(  x\right)  \frac{d^{3}}{d\gamma^{3}%
}\ell_{\gamma,J}^{\alpha}\left(  x\right)  +g_{\gamma}\left(  x\right)  \\
&  =\left(  1+\frac{1}{\alpha}\right)  \ell_{\gamma,J}\left(  x\right)
\Psi_{\gamma,\alpha}^{\left(  3\right)  }\left(  x\right)  +g_{\gamma}\left(
x\right)  ,
\end{align*}
where%
\[%
\begin{array}
[c]{l}%
g_{\gamma}\left(  x\right)  :=\left(  1+\dfrac{1}{\alpha}\right)  \dfrac
{d}{d\gamma}\ell_{\gamma,J}\left(  x\right)  \dfrac{d^{2}}{d\gamma^{2}}%
\ell_{\gamma,J}^{\alpha}\left(  x\right)  \medskip\\
\ \ \ \ \ \ \ \ \ +\left(  1+\alpha\right)  \ell_{\gamma,J}^{\alpha-2}\left(
x\right)  \dfrac{d}{d\gamma}\ell_{\gamma,J}\left(  x\right)  \left\{
2\ell_{\gamma,J}\left(  x\right)  \frac{d^{2}}{d\gamma^{2}}\ell_{\gamma
,J}^{\alpha}\left(  x\right)  +\left(  \alpha-1\right)  \left(  \dfrac
{d}{d\gamma}\ell_{\gamma,J}\left(  x\right)  \right)  ^{2}\right\}  .
\end{array}
\]
In Lemma $\ref{prop2},$ we established that both integrals
\[
\int_{1}^{\infty}\left\vert \Psi_{\gamma,\alpha+1}^{\left(  3\right)  }\left(
x\right)  \right\vert dx\text{ and }\int_{1}^{\infty}\left\vert \ell
_{\gamma,J}\left(  x\right)  \Psi_{\gamma,\alpha}^{\left(  3\right)  }\left(
x\right)  \right\vert dx
\]
are finite. On the other hand, from the expression%
\[
g_{\gamma}\left(  z\right)  =\Psi_{\gamma,\alpha+1}^{\left(  3\right)
}\left(  x\right)  -\left(  1+\frac{1}{\alpha}\right)  \ell_{\gamma,J}\left(
x\right)  \Psi_{\gamma,\alpha}^{\left(  3\right)  }\left(  x\right)  ,
\]
it follows that $\int_{1}^{\infty}\left\vert g_{\gamma}\left(  x\right)
\right\vert dx<\infty$ as well.
\end{proof}

\begin{proposition}
\textbf{\label{prop2}}Assume that assumption $\left[  A1\right]  -\left[
A3\right]  $ hold, then for every $\alpha>0,$ each derivative function
$\Psi_{\gamma,\alpha}^{\left(  m\right)  },$ $1\leq m\leq4,$ is a finite
linear combination of
\begin{equation}
E_{m,\alpha}^{\left(  i\right)  }\left(  x;\gamma\right)  :=\left(  x^{-a_{i}%
}\log^{b_{i}}x\right)  \ell_{\gamma,J}^{\alpha}\left(  x\right)
{\textstyle\prod_{j=1}^{m}}
\left[  \mathcal{L}^{\left(  j-1\right)  }\left(  x^{-1/\gamma}\right)
\right]  ^{c_{i,j}}, \label{lc}%
\end{equation}
for some constant $a_{i}\geq0$ and integers $b_{i},c_{i,j}\geq0,$ where
$\mathcal{L}^{\left(  j\right)  }$ denotes the $j$-th derivative of
$\mathcal{L} $ and $\mathcal{L}^{\left(  0\right)  }:=\mathcal{L}.$ Moreover
$\Psi_{\gamma,\alpha}^{\left(  m\right)  }$ is bounded over $x>1,$ and both
$\int_{1}^{\infty}\left\vert \Psi_{\gamma,\alpha+1}^{\left(  m\right)
}\left(  x\right)  \right\vert dx$ and $\int_{1}^{\infty}\left\vert
\ell_{\gamma,J}\left(  x\right)  \Psi_{\gamma,\alpha}^{\left(  m\right)
}\left(  x\right)  \right\vert dx$ are finite, for $1\leq m\leq4.$
\end{proposition}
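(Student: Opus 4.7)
The plan is to proceed by induction on $m$, first establishing the structural decomposition and then deriving boundedness and integrability as consequences. For the base case $m=1$, I would compute $\Psi_{\gamma,\alpha}^{(1)}$ directly. Writing $\log\ell_{\gamma,J}(x)=\log J(x^{-1/\gamma})-\log\gamma-(1+1/\gamma)\log x$ and differentiating in $\gamma$, the defining identity $\mathcal{L}(s)=(J^{(1)}(s)/J(s))\log s$ of assumption $[A3]$ gives
\[
\frac{d}{d\gamma}\log J(x^{-1/\gamma})=-\gamma^{-1}x^{-1/\gamma}\mathcal{L}(x^{-1/\gamma}),
\]
which together with $\frac{d}{d\gamma}\log\ell_{\gamma}(x)=-\gamma^{-1}+\gamma^{-2}\log x$ yields
\[
\Psi_{\gamma,\alpha}^{(1)}(x)=\alpha\,\ell_{\gamma,J}^{\alpha}(x)\left[\gamma^{-2}\log x-\gamma^{-1}-\gamma^{-1}x^{-1/\gamma}\mathcal{L}(x^{-1/\gamma})\right],
\]
i.e.\ a linear combination of three terms of the claimed form with $a_i\in\{0,1/\gamma\}$, $b_i\in\{0,1\}$ and $c_{i,1}\in\{0,1\}$.

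For the inductive step, I would differentiate a single prototype term $E_{m,\alpha}^{(i)}(x;\gamma)$ with respect to $\gamma$ by the product rule, tracking three sources of $\gamma$-dependence. First, when $a_i=k/\gamma$ for a non-negative integer $k$, the derivative $\frac{d}{d\gamma}x^{-a_i}=x^{-a_i}\log x\cdot k/\gamma^2$ preserves $a_i$ and increments $b_i$ by one. Second, the derivative of $\ell_{\gamma,J}^\alpha(x)$ is $\Psi_{\gamma,\alpha}^{(1)}(x)$, which by the base case is already of the admissible form. Third, $\frac{d}{d\gamma}\mathcal{L}^{(j-1)}(x^{-1/\gamma})=\gamma^{-2}x^{-1/\gamma}\log x\cdot\mathcal{L}^{(j)}(x^{-1/\gamma})$, so the Leibniz rule applied to $\prod_{j=1}^{m}[\mathcal{L}^{(j-1)}]^{c_{i,j}}$ transfers one unit from $c_{i,j}$ to $c_{i,j+1}$ at the cost of an extra factor $\gamma^{-2}x^{-1/\gamma}\log x$. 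In every case the result remains a finite linear combination of $E_{m+1,\alpha}^{(\cdot)}$-type terms, with the product now ranging over $j=1,\dots,m+1$. This closes the induction, valid up to $m=4$ since only derivatives $\mathcal{L}^{(j)}$ with $j\le 3$ enter.

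For the boundedness assertion on $x>1$, every building block of (\ref{lc}) is bounded: $J^\alpha(x^{-1/\gamma})$ is bounded by $[A2]$, each $\mathcal{L}^{(j-1)}(x^{-1/\gamma})$ with $1\le j\le m\le 4$ is bounded by $[A3]$, and the remaining factor $x^{-a_i}\log^{b_i}x\cdot\ell_{\gamma}^{\alpha}(x)$ is of order $x^{-a_i-\alpha(1+1/\gamma)}\log^{b_i}x$, which attains a finite supremum on $[1,\infty)$ as soon as $a_i+\alpha(1+1/\gamma)>0$. For integrability, both $\Psi_{\gamma,\alpha+1}^{(m)}(x)$ and $\ell_{\gamma,J}(x)\Psi_{\gamma,\alpha}^{(m)}(x)$ are finite linear combinations of terms of the same form but with $\alpha$ replaced by $\alpha+1$, hence each integrand is bounded by a constant multiple of $x^{-a_i-(\alpha+1)(1+1/\gamma)}\log^{b_i}x$, and the integral over $(1,\infty)$ converges because $(\alpha+1)(1+1/\gamma)>1$ for every $\alpha>0$ and $\gamma>0$.

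The main obstacle I anticipate is the combinatorial bookkeeping in the inductive step, namely checking that the Leibniz-rule expansion at stage $m$ never produces a derivative $\mathcal{L}^{(j)}$ with $j>m$; this is precisely what matches the restriction $1\le m\le 4$ in the proposition with the boundedness of $\mathcal{L}^{(j)}$ up to $j=3$ postulated in $[A3]$, and it is what makes the structural decomposition, the pointwise bound, and the integrability estimates all go through simultaneously.
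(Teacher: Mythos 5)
Your proof is correct and takes essentially the same route as the paper's: induction on $m$ with the same explicit base case for $\Psi_{\gamma,\alpha}^{(1)}$, a Leibniz-rule inductive step in which differentiating $\ell_{\gamma,J}^{\alpha}$ reproduces the base form and differentiating each $\mathcal{L}^{(j-1)}(x^{-1/\gamma})$ shifts one derivative order upward at the cost of a factor $\gamma^{-2}x^{-1/\gamma}\log x$, and boundedness/integrability then read off from $[A1]$--$[A3]$ together with the decay $\ell_{\gamma,J}^{\alpha+1}(x)=O\left(x^{-(\alpha+1)(1+1/\gamma)}\right)$. If anything you are slightly more careful than the paper at one point: the paper's displayed inductive derivative treats the factor $x^{-a_{i}}$ as $\gamma$-free, whereas you correctly track exponents of the form $a_{i}=k/\gamma$ (which do arise, e.g.\ $a_{i}=1/\gamma$ in the base case) and note that differentiating them merely multiplies by $k\gamma^{-2}\log x$, preserving the prototype form; this repairs a small omission without altering the argument.
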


\begin{proof}
Let us reasoning by recurrence. By using elementary derivative, yields%
\begin{align*}
&  \Psi_{\gamma,\alpha}^{\left(  1\right)  }\left(  x\right) \\
&  =\alpha\gamma^{-2}\ell_{\gamma,J}^{\alpha-1}\left(  x\right)  \left(
x^{-\frac{1}{\gamma}}J^{\prime}\left(  x^{-1/\gamma}\right)  \ell_{\gamma
}\left(  x\right)  \log x+x^{-\frac{1}{\gamma}-1}J\left(  x^{-1/\gamma
}\right)  \left(  \gamma^{-1}\log x-1\right)  \right)  ,
\end{align*}
which equals $\alpha\gamma^{-2}\ell_{\gamma,J}^{\alpha}\left(  x\right)
\left(  x^{-\frac{1}{\gamma}}\mathcal{L}\left(  x^{-1/\gamma}\right)
+\gamma^{-1}\log x-1\right)  $ where $\mathcal{L}$ is as in assumption
$\left[  A3\right]  .$ It is clear that the latter quantity may be rewritten
into
\begin{equation}
\alpha\gamma^{-2}\left\{  x^{-\frac{1}{\gamma}}\ell_{\gamma,J}^{\alpha}\left(
x\right)  \mathcal{L}\left(  x^{-1/\gamma}\right)  \right\}  +\alpha
\gamma^{-3}\left\{  \ell_{\gamma,J}^{\alpha}\left(  x\right)  \log x\right\}
-\alpha\gamma^{-2}\left\{  \ell_{\gamma,J}^{\alpha}\left(  x\right)  \right\}
, \label{la}%
\end{equation}
which a linear combination of expression $\left(  \ref{lc}\right)  ,$\ for
some suitable constants $a_{i}\geq0$ and integers $b_{i},c_{i,j}\geq0.$ Let us
assume that the result holds at order $m$ and show it is at order $m+1 $ as
well. To \ this end, we compute the derivative of expression $\left(
\ref{lc}\right)  .$ We have
\[
\frac{d}{d\gamma}E_{m,\alpha}^{\left(  i\right)  }\left(  x;\gamma\right)
=x^{-a_{i}}\log^{b_{i}}x\frac{d}{d\gamma}\left(  \ell_{\gamma,J}^{\alpha
}\left(  x\right)
{\textstyle\prod_{j=1}^{m}}
\left[  \mathcal{L}^{\left(  j-1\right)  }\left(  x^{-1/\gamma}\right)
\right]  ^{c_{i,j}}\right)  ,
\]
which equals%
\begin{align}
&  x^{-a_{i}}\log^{b_{i}}x%
{\textstyle\prod_{j=1}^{m}}
\left[  \mathcal{L}^{\left(  j-1\right)  }\left(  x^{-1/\gamma}\right)
\right]  ^{c_{i,j}}\frac{d}{d\gamma}\ell_{\gamma,J}^{\alpha}\left(  x\right)
+x^{-a_{i}}\log^{b_{i}}x\ell_{\gamma,J}^{\alpha}\left(  x\right)
\label{latter-exp}\\
&
\ \ \ \ \ \ \ \ \ \ \ \ \ \ \ \ \ \ \ \ \ \ \ \ \ \ \ \ \ \ \ \ \ \ \ \ \ \ \ \times
\gamma^{2}x^{-\frac{1}{\gamma}}\log x\left.  \frac{d}{dy}%
{\textstyle\prod_{j=1}^{m}}
\left[  \mathcal{L}^{\left(  j-1\right)  }\left(  y\right)  \right]
^{c_{i,j}}\right\vert _{y=x^{-1/\gamma}}.\nonumber
\end{align}
By using the product rule for derivatives, we write
\[
\frac{d}{dy}%
{\textstyle\prod_{j=1}^{m}}
\left[  \mathcal{L}^{\left(  j-1\right)  }\left(  y\right)  \right]
^{c_{i.j}}=%
{\textstyle\prod_{j=1}^{m+1}}
d_{i,j}\left[  \mathcal{L}^{\left(  j-1\right)  }\left(  y\right)  \right]
^{c_{i,j}^{\ast}},
\]
for some integers $d_{i,j},c_{i,j}^{\ast}\geq0.$ By replacing $d\ell
_{\gamma,J}^{\alpha}\left(  x\right)  /d\gamma$ by its formula $\left(
\ref{lc}\right)  ,$ we conclude that $\left(  \ref{latter-exp}\right)  $ is a
linear combinations of $E_{m+1,\alpha}^{\left(  i\right)  }\left(
x;\gamma\right)  ,$ which completes the proof of the first assertion. The
second one it obvious, it suffices to use assumption $\left[  A1\right]
-\left[  A3\right]  $ and the fact that the function $x\rightarrow\left(
x^{-a_{i}}\log^{b_{i}}x\right)  \ell_{\gamma,J}^{\alpha}\left(  x\right)  $ is
bounded over $x>1.$ Let us now show the third one. To do this end, we have to
make sure that $\int_{1}^{\infty}\left\vert E_{m,\alpha+1}^{\left(  i\right)
}\left(  x;\gamma\right)  \right\vert $ be finite. From assumption $\left[
A3\right]  ,$ the function $\mathcal{L}$ and their derivatives are bounded
over $\left(  0,1\right)  ,$ then $%
{\textstyle\prod_{j=1}^{m}}
\left[  \mathcal{L}^{\left(  j-1\right)  }\left(  x^{-1/\gamma}\right)
\right]  ^{c_{j}}$ is bounded over $x>1$ too, it follows that $E_{m,\alpha
+1}^{\left(  i\right)  }\left(  x;\gamma\right)  =O\left(  \left(  x^{-a_{i}%
}\log^{b_{i}}x\right)  \ell_{\gamma,J}^{\alpha+1}\left(  x\right)  \right)  .$
Since $\ell_{\gamma,J}^{\alpha+1}\left(  x\right)  =\ell_{\gamma,J}^{\alpha
}\left(  x\right)  \ell_{\gamma,J}\left(  x\right)  $ and $\ell_{\gamma
,J}^{\alpha}\left(  x\right)  =O\left(  1\right)  ,$ then $E_{m,\alpha
+1}^{\left(  i\right)  }\left(  x;\gamma\right)  =O\left(  \ell_{\gamma
,J}\left(  x\right)  \right)  .$ We have $\int_{1}^{\infty}\ell_{\gamma
,J}\left(  x\right)  ds=1,$ then $\int_{1}^{\infty}\left\vert E_{m,\alpha
+1}^{\left(  i\right)  }\left(  x;\gamma\right)  \right\vert dx$ is finite,
which completes the proof of the third assertion. The fourth assertion comes
by using similar arguments.
\end{proof}

\begin{proposition}
\textbf{\label{prop3bis}}Under assumption $\left[  A1\right]  -\left[
A2\right]  ,$ we have
\[
\int_{1}^{\infty}\left\vert \log^{b_{i}}x\frac{d}{dx}\ell_{\gamma,J}^{\alpha
}\left(  x\right)  \right\vert dx<\infty,\text{ for }\alpha>0.
\]
for every integer $b_{i}\geq0.$
\end{proposition}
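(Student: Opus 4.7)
The plan is to reduce the integral on $(1,\infty)$ to one on $(0,1)$ by the substitution $s=x^{-1/\gamma}$, then split the derivative into two pieces via the product rule and handle them separately. Writing $\ell_{\gamma,J}^{\alpha}(x)=\gamma^{-\alpha}g(x^{-1/\gamma})$ with $g(s):=J^{\alpha}(s)\,s^{\alpha(\gamma+1)}$, a direct change of variables gives
\[
\int_{1}^{\infty}\!\bigl|\log^{b_{i}}x\,\tfrac{d}{dx}\ell_{\gamma,J}^{\alpha}(x)\bigr|\,dx
\;=\;\gamma^{b_{i}-\alpha}\int_{0}^{1}(-\log s)^{b_{i}}\,|g'(s)|\,ds,
\]
so the proposition reduces to finiteness of the right-hand side. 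The product rule expands $g'(s)$ as the sum $\alpha J^{\alpha-1}(s)J'(s)\,s^{\alpha(\gamma+1)}+\alpha(\gamma+1)J^{\alpha}(s)\,s^{\alpha(\gamma+1)-1}$. The second piece contributes an integral $I_{2}$ that is trivial to bound: $J^{\alpha}$ is bounded by $[A2]$, and $\int_{0}^{1}(-\log s)^{b_{i}}s^{\alpha(\gamma+1)-1}\,ds$ is finite since $\alpha(\gamma+1)>0$.

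The main obstacle is the first piece, giving $I_{1}=\alpha\int_{0}^{1}(-\log s)^{b_{i}}|J^{\alpha-1}(s)J'(s)|\,s^{\alpha(\gamma+1)}\,ds$, which is delicate because for $0<\alpha<1$ the factor $J^{\alpha-1}$ may blow up at zeros of $J$ (for instance at $s=1$ for the Epanechnikov/biweight/triweight/quadweight families), and assumption $[A3]$, which would control $J'/J$, is not in force here. To circumvent this I would rewrite $\alpha J^{\alpha-1}(s)J'(s)=(J^{\alpha})'(s)$ and use $[A1]$ (which gives $(J^{\alpha})'\le 0$), obtaining $I_{1}=\int_{0}^{1}h(s)\bigl(-(J^{\alpha})'(s)\bigr)\,ds$ with $h(s):=(-\log s)^{b_{i}}s^{\alpha(\gamma+1)}$. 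An integration by parts then yields boundary terms plus $\int_{0}^{1}h'(s)J^{\alpha}(s)\,ds$: the boundary term at $s=0$ vanishes because $s^{\alpha(\gamma+1)}$ dominates the logarithmic factor in $h$ and $J^{\alpha}$ is bounded; the boundary term at $s=1$ is finite since both $h(1)$ and $J^{\alpha}(1)$ are. Finally $|h'(s)|$ is a linear combination of $(-\log s)^{b_{i}}s^{\alpha(\gamma+1)-1}$ and $(-\log s)^{b_{i}-1}s^{\alpha(\gamma+1)-1}$, each integrable on $(0,1)$, so the remaining integral is finite by the boundedness of $J^{\alpha}$ from $[A2]$, which shows $I_{1}<\infty$ and completes the proof.
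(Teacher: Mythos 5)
Your proof is correct, and it takes a genuinely different---and in fact more careful---route than the paper's. The paper works directly on $(1,\infty)$: it splits $\frac{d}{dx}\ell_{\gamma,J}(x)$ into the two signed terms $M_{1}(x):=\gamma^{-1}x^{-1/\gamma-1}J^{(1)}(x^{-1/\gamma})\ell_{\gamma}(x)$ and $M_{2}(x):=J(x^{-1/\gamma})\frac{d}{dx}\ell_{\gamma}(x)$, invokes the boundedness of $J$ and $J^{(1)}$ from $[A2]$ together with the monotonicity signs from $[A1]$, and concludes from $\int_{1}^{\infty}x^{-1/\gamma-1}dx<\infty$ and $\int_{1}^{\infty}\left\vert d\ell_{\gamma}(x)/dx\right\vert dx=\gamma^{-1}$ that each piece is integrable. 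As written, however, that argument silently discards both the weight $\log^{b_{i}}x$ (harmless, since $\int_{1}^{\infty}x^{-1/\gamma-1}\log^{b_{i}}x\,dx<\infty$) and, more seriously, the chain-rule factor $\alpha\ell_{\gamma,J}^{\alpha-1}(x)$, which for $0<\alpha<1$ is unbounded both as $x\rightarrow\infty$ and near points where $J$ vanishes (e.g.\ at $s=1$ for $J_{2},J_{3},J_{4}$). Your route---substituting $s=x^{-1/\gamma}$ to reduce everything to $\int_{0}^{1}(-\log s)^{b_{i}}\left\vert g^{\prime}(s)\right\vert ds$, and, crucially, rewriting $\alpha J^{\alpha-1}(s)J^{\prime}(s)=(J^{\alpha})^{\prime}(s)$ so that the dangerous term can be integrated by parts against the bounded monotone function $J^{\alpha}$---is exactly the device needed to control that singular factor, so your proof not only differs in mechanism (a total-variation/integration-by-parts argument on $(0,1)$ versus direct domination on $(1,\infty)$) but actually makes rigorous a step the paper glosses over. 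Two small points you should make explicit to be airtight: first, if $J$ vanishes at some interior point $s_{0}<1$, then $J\equiv 0$ on $[s_{0},1)$ by monotonicity, and $(J^{\alpha})^{\prime}$ may blow up as $s\uparrow s_{0}$; the integration by parts should then be performed on $[\epsilon,s_{0}-\delta]$ and the limit taken by monotone convergence, the boundary term vanishing because $J^{\alpha}(s_{0})=0$ while $h$ stays bounded there. Second, $J^{\alpha}(1)$ in your boundary evaluation should be read as the left limit $J^{\alpha}(1^{-})$, which is finite by $[A2]$. With these clarifications your argument is complete and, for $0<\alpha<1$, strictly more solid than the one in the paper.
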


\begin{proof}
We have $d\ell_{\gamma,J}^{\alpha}\left(  x\right)  /dx=\alpha\left(
d\ell_{\gamma,J}\left(  x\right)  /dx\right)  \ell_{\gamma,J}^{\alpha
-1}\left(  x\right)  $ and
\begin{align*}
\frac{d\ell_{\gamma,J}\left(  x\right)  }{dx}  &  =\frac{d}{dx}\left(
J\left(  x^{-1/\gamma}\right)  \ell_{\gamma}\left(  x\right)  \right) \\
&  =-\gamma^{-1}x^{-1/\gamma-1}J^{\left(  1\right)  }\left(  x^{-1/\gamma
}\right)  \ell_{\gamma}\left(  x\right)  +J\left(  x^{-1/\gamma}\right)
\frac{d}{dx}\ell_{\gamma}\left(  x\right)  .
\end{align*}
Form assumptions $\left[  A1\right]  -\left[  A2\right]  ,$ the function $J$
is nonincreasing then its derivative $J^{\left(  1\right)  }<0,$ on the other
hand $\ell_{\gamma}>0$ and $d\ell_{\gamma}\left(  x\right)  dx=-\gamma
^{-1}\left(  1+\gamma^{-1}\right)  x^{-2-1/\gamma}<0,$ it follows that
$\left\vert d\ell_{\gamma,J}^{\alpha}\left(  x\right)  /dx\right\vert $ is
less than of equal to the sum of
\[
M_{1}\left(  x\right)  :=\gamma^{-1}x^{-1/\gamma-1}J^{\left(  1\right)
}\left(  x^{-1/\gamma}\right)  \ell_{\gamma}\left(  x\right)  \text{ and
}M_{2}\left(  x\right)  :=J\left(  x^{-1/\gamma}\right)  \frac{d}{dx}%
\ell_{\gamma}\left(  x\right)  .\text{ }%
\]
The function $J^{\left(  1\right)  }$ being bounded, then $J^{\left(
1\right)  }\left(  x^{-1/\gamma}\right)  \ell_{\gamma}\left(  x\right)
=O\left(  1\right)  ,$ hence $\int_{1}^{\infty}M_{1}\left(  x\right)  dx$ is
finite. Likewise $M_{2}\left(  x\right)  =O\left(  d\ell_{\gamma}\left(
x\right)  /dx\right)  $ and $\int_{1}^{\infty}\left(  d\ell_{\gamma}\left(
x\right)  /dx\right)  dx=-\ell_{\gamma}\left(  1\right)  =-\gamma^{-1},$ it
follows that $\int_{1}^{\infty}M_{2}\left(  x\right)  dx$ is finite.
\end{proof}

\begin{proposition}
\textbf{\label{prop3}}Assume that\textbf{\ }assumption $\left[  A1\right]
-\left[  A3\right]  $ holds. Then for every $\alpha>0$ we have
\[
\int_{1}^{\infty}\left\vert \frac{d}{dx}\Psi_{\gamma,\alpha}^{\left(
m\right)  }\left(  x\right)  \right\vert dx<\infty,\text{ for }m=1,2.
\]

\end{proposition}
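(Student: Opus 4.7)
The plan is to reduce the claim, via Proposition \ref{prop2}, to the integrability of the derivative of each building block $E_{m,\alpha}^{(i)}(x;\gamma)$, and then to split that derivative by the product rule into three summands, dispatching two of them by the explicit polynomial-logarithmic decay of the weighted Pareto density and the third by invoking Proposition \ref{prop3bis}.

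First, by Proposition \ref{prop2}, for $m=1,2$ the function $\Psi_{\gamma,\alpha}^{(m)}$ is a finite linear combination of the blocks
\[
E_{m,\alpha}^{(i)}(x;\gamma)=\left(x^{-a_{i}}\log^{b_{i}}x\right)\ell_{\gamma,J}^{\alpha}(x)\prod_{j=1}^{m}\left[\mathcal{L}^{(j-1)}(x^{-1/\gamma})\right]^{c_{i,j}},
\]
with $a_{i}\geq 0$ and integers $b_{i},c_{i,j}\geq 0$; it therefore suffices to bound $\int_{1}^{\infty}|dE_{m,\alpha}^{(i)}/dx|\,dx$ for each $i$. Applying the product rule splits $dE_{m,\alpha}^{(i)}/dx$ into a piece (a) where the derivative hits the power-log factor $x^{-a_{i}}\log^{b_{i}}x$, a piece (b) where it hits $\ell_{\gamma,J}^{\alpha}(x)$, and a piece (c) where it hits one of the factors $\mathcal{L}^{(j-1)}(x^{-1/\gamma})$, the latter producing by the chain rule an extra factor $-\gamma^{-1}x^{-1/\gamma-1}\mathcal{L}^{(j)}(x^{-1/\gamma})$.

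I would then handle these pieces as follows. Under $[A3]$, each $\mathcal{L}^{(j)}$ with $0\leq j\leq 3$ is bounded on $(0,1)$, so the products of $\mathcal{L}^{(j-1)}(x^{-1/\gamma})$ factors (and, in piece (c), the new $\mathcal{L}^{(j)}(x^{-1/\gamma})$ factor) are uniformly bounded on $x\geq 1$; note here that the derivative never asks for $\mathcal{L}^{(j)}$ beyond order $m$, which is covered by $[A3]$ since $m\leq 2$. For piece (a), combining this boundedness with the estimate $\ell_{\gamma,J}^{\alpha}(x)\leq\|J\|_{\infty}^{\alpha}\gamma^{-\alpha}x^{-\alpha(1+1/\gamma)}$ coming from $[A2]$ reduces the integral to a multiple of $\int_{1}^{\infty}x^{-a_{i}-1-\alpha(1+1/\gamma)}(1+\log^{b_{i}}x)\,dx$, which is finite for every $\alpha>0$. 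Piece (c) is analogous after absorbing the additional $x^{-1/\gamma-1}$ factor, giving an integrand of order $x^{-a_{i}-1/\gamma-1-\alpha(1+1/\gamma)}\log^{b_{i}}x$, again integrable on $[1,\infty)$. Piece (b) is the only one where no convenient pointwise bound on $d\ell_{\gamma,J}^{\alpha}/dx$ is available, and I would dispatch it by invoking Proposition \ref{prop3bis}, which asserts $\int_{1}^{\infty}|\log^{b_{i}}x\,(d\ell_{\gamma,J}^{\alpha}/dx)|\,dx<\infty$; the extra factors $x^{-a_{i}}\leq 1$ and the bounded product of $\mathcal{L}^{(j-1)}(x^{-1/\gamma})$ terms contribute only a multiplicative constant.

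The key difficulty lies in piece (b): the derivative $d\ell_{\gamma,J}^{\alpha}/dx$ mixes a $J^{(1)}$-contribution with the pure Pareto derivative, neither of which admits a clean pointwise bound accommodating arbitrary logarithmic weights when $\alpha$ is small. Proposition \ref{prop3bis} was tailored precisely to absorb such $\log^{b_{i}}x$ weights under $[A1]$--$[A2]$. Once the three pieces are controlled for each fixed $i$, summing over the finite linear combination delivered by Proposition \ref{prop2} yields the claim for both $m=1$ and $m=2$.
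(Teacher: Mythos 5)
Your proposal is correct and takes essentially the same route as the paper's own proof: the same reduction via Proposition \ref{prop2} to the blocks $E_{m,\alpha}^{\left(  i\right)  }$, the same three-way product-rule split (your pieces (a), (b), (c) are exactly the paper's $\mathcal{D}_{1}$, $\mathcal{D}_{2}$, $\mathcal{D}_{3}$), with the power-log and $\mathcal{L}$-derivative pieces controlled by the boundedness from $\left[  A3\right]$ together with the decay $\ell_{\gamma,J}^{\alpha}\left(  x\right)  =O\left(  x^{-\alpha\left(  1+1/\gamma\right)  }\right)$, and the piece where the derivative hits $\ell_{\gamma,J}^{\alpha}$ dispatched by Proposition \ref{prop3bis}, exactly as in the paper. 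Your added remarks (that the chain rule only produces $\mathcal{L}^{\left(  j\right)  }$ up to order $m\leq2$, covered by $\left[  A3\right]$, and that the $b_{i}=0$ case is absorbed by the $1+\log^{b_{i}}x$ bound) are correct refinements of the same argument.
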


\begin{proof}
Since $\Psi_{\gamma,\alpha}^{\left(  m\right)  }\left(  x\right)  $ is linear
combination of $E_{m,\alpha}^{\left(  i\right)  }\left(  x;\gamma\right)  $
then it suffices to show that $\int_{1}^{\infty}\left\vert dE_{m,\alpha
}^{\left(  i\right)  }\left(  x;\gamma\right)  /dx\right\vert dx<\infty.$ It
is obvious that $dE_{m,\alpha}^{\left(  i\right)  }\left(  x;\gamma\right)
/dx $ is the sum of%
\[
\mathcal{D}_{1}\left(  x\right)  :=\ell_{\gamma,J}^{\alpha}\left(  x\right)
{\textstyle\prod_{j=1}^{m}}
\left[  \mathcal{L}^{\left(  j-1\right)  }\left(  x^{-1/\gamma}\right)
\right]  ^{c_{i,j}}\frac{d}{dx}\left(  x^{-a_{i}}\log^{b_{i}}x\right)  ,
\]%
\[
\mathcal{D}_{2}\left(  x\right)  :=\left(  x^{-a_{i}}\log^{b_{i}}x\right)
{\textstyle\prod_{j=1}^{m}}
\left[  \mathcal{L}^{\left(  j-1\right)  }\left(  x^{-1/\gamma}\right)
\right]  ^{c_{i,j}}\frac{d}{dx}\ell_{\gamma,J}^{\alpha}\left(  x\right)
\]
and%
\[
\mathcal{D}_{3}\left(  x\right)  :=\left(  x^{-a_{i}}\log^{b_{i}}x\right)
\ell_{\gamma,J}^{\alpha}\left(  x\right)  \frac{d}{dx}%
{\textstyle\prod_{j=1}^{m}}
\left[  \mathcal{L}^{\left(  j-1\right)  }\left(  x^{-1/\gamma}\right)
\right]  ^{c_{i,j}}.
\]
Observe that%
\[
\frac{d}{dx}\left(  x^{-a_{i}}\log^{b_{i}}x\right)  =\left\{
\begin{array}
[c]{cc}%
-a_{i}x^{-a_{i}-1} & \text{if }b_{i}=0,\\
\left(  b_{i}-a_{i}\log x\right)  x^{-a_{i}-1}\log^{b_{i}-1}x & \text{if
}b_{i}>1.
\end{array}
\right.  .
\]
In view of assumption $\left[  A3\right]  ,$ $%
{\textstyle\prod_{j=1}^{m}}
\left[  \mathcal{L}^{\left(  j-1\right)  }\left(  x^{-1/\gamma}\right)
\right]  ^{c_{i,j}}=O\left(  1\right)  ,$ over $x>1,$ on the other hand, $\ J$
is bounded and $\ell_{\gamma,J}^{\alpha}\left(  x\right)  =J^{\alpha}\left(
x^{-1/\gamma}\right)  \left(  \gamma^{-\alpha}x^{-\alpha\left(  1+1/\gamma
\right)  }\right)  ,$ then $\ell_{\gamma,J}^{\alpha}\left(  x\right)
=O\left(  x^{-\alpha\left(  1+1/\gamma\right)  }\right)  ,$ it follows that%
\[
\mathcal{D}_{1}\left(  x\right)  =O\left(  1\right)  x^{a_{i}-\alpha\left(
1+1/\gamma\right)  }\log^{b_{i}}x.
\]
It is easy to check that $\int_{1}^{\infty}x^{a_{i}-\alpha\left(
1+1/\gamma\right)  }\log^{b_{i}}xds<\infty,$ therefore $\int_{1}^{\infty
}\left\vert \mathcal{D}_{1}\left(  x\right)  \right\vert dx<\infty.$ Let us
now consider the second term and write $\mathcal{D}_{2}\left(  x\right)
=O\left(  \log^{b_{i}}x\frac{d}{dx}\ell_{\gamma,J}^{\alpha}\left(  x\right)
\right)  .$ From Proposition $\ref{prop3bis}$ we have$\int_{1}^{\infty
}\left\vert \log^{b_{i}}x\frac{d}{dx}\ell_{\gamma,J}^{\alpha}\left(  x\right)
\right\vert dx<\infty,$ which implies that $\int_{1}^{\infty}\left\vert
\mathcal{D}_{2}\left(  x\right)  \right\vert dx<\infty$ too. Next we show that
$\int_{1}^{\infty}\left\vert \mathcal{D}_{3}\left(  x\right)  \right\vert
dx<\infty$ as well. Indeed, observe that
\[
\frac{d}{dx}%
{\textstyle\prod_{j=1}^{m}}
\left[  \mathcal{L}^{\left(  j-1\right)  }\left(  x^{-1/\gamma}\right)
\right]  ^{c_{i,j}}=-\left(  1+\gamma^{-1}\right)  x^{-1/\gamma-1}%
{\textstyle\prod_{j=1}^{m+1}}
d_{i,j}^{\ast}\left[  \mathcal{L}^{\left(  j-1\right)  }\left(  x^{-1/\gamma
}\right)  \right]  ^{c_{i,j}^{\ast}},
\]
for some integers $c_{i,j}^{\ast},d_{i,j}^{\ast}\geq0.$ In view of assumptions
$\left[  A3\right]  ,$ we deduce that the latter quantity equals $O\left(
x^{-1/\gamma-1}\right)  ,$ over $x>1,$ then $\mathcal{D}_{2}\left(  x\right)
=O\left(  1\right)  x^{-1/\gamma-1}\left(  x^{-a_{i}}\log^{b_{i}}x\right)  .$
It is readily to check that $\int_{1}^{\infty}x^{-1/\gamma-1}\left(
x^{-a_{i}}\log^{b_{i}}x\right)  dx<\infty$ thereby $\int_{1}^{\infty
}\left\vert \mathcal{D}_{3}\left(  x\right)  \right\vert dx<\infty.$ In
conclusion $\int_{1}^{\infty}\left\vert dE_{m}^{\left(  i\right)  }\left(
x;\gamma\right)  /dx\right\vert dx$ is finite for $m=1,2,$ which completes the
proof of Proposition $\ref{prop3}.$
\end{proof}

\begin{proposition}
\textbf{\label{prop4}}In the probability space $\left(  \Omega,\mathcal{A}%
,\mathbf{P}\right)  ,$ there exists a standard Wiener process $\left\{
W\left(  x\right)  ,\text{ }x\geq0\right\}  $ such that for every
$0<\epsilon<1/2:$%
\[
\sup_{x\geq1}x^{\left(  1/2-\epsilon\right)  /\gamma_{0}}\left\vert
D_{k}\left(  x\right)  -W\left(  x^{-1/\gamma_{0}}\right)  \right\vert
\overset{\mathbf{P}}{\rightarrow}0,\text{ as }n\rightarrow\infty,
\]
where $D_{k}\left(  x\right)  :=\sqrt{k}\left(  \frac{n}{k}\overline{F}%
_{n}\left(  X_{n-k:n}x\right)  -\frac{n}{k}\overline{F}\left(  X_{n-k:n}%
x\right)  \right)  .$ Moreover
\[
\sup_{x\geq1}x^{\left(  \epsilon-1/2\right)  /\gamma_{0}}\left\vert
D_{k}\left(  x\right)  \right\vert =O_{\mathbf{P}}\left(  1\right)  ,\text{ as
}n\rightarrow\infty.
\]

\end{proposition}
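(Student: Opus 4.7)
The plan is to reduce $D_k(x)$ to the uniform empirical process near the origin, invoke a Hungarian-type strong approximation, and translate the resulting Brownian bridge into a Wiener process via Brownian scaling. First I would set $V_i := \overline{F}(X_i)$, so $V_1,\ldots,V_n$ are iid uniform on $(0,1)$; let $G_n$ be their empirical cdf and $\alpha_n(t) := \sqrt{n}(G_n(t)-t)$ the uniform empirical process. Since $\overline{F}$ is nonincreasing and continuous, $\overline{F}_n(y) = G_n(\overline{F}(y))$, which yields the identity $D_k(x) = \sqrt{n/k}\,\alpha_n(\overline{F}(X_{n-k:n}x))$. The random argument $\overline{F}(X_{n-k:n}x)$ is of order $(k/n)x^{-1/\gamma_0}$: combining $X_{n-k:n}/U(n/k)\overset{\mathbf{P}}{\rightarrow}1$ with the first-order Potter bound $(\ref{first-order-PotterF})$ gives $(n/k)\overline{F}(X_{n-k:n}x) = x^{-1/\gamma_0}(1+o_{\mathbf{P}}(x^{\delta/\gamma_0}))$ uniformly in $x\geq 1$ for arbitrarily small $\delta>0$.

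Next I would invoke the Koml\'os--Major--Tusn\'ady weighted strong approximation: on a sufficiently rich space there exist Brownian bridges $B_n$ such that for every $\nu\in(0,1/2)$ one has $|\alpha_n(t)-B_n(t)| = O_{\mathbf{P}}(n^{-\nu}(t(1-t))^{1/2-\nu})$ uniformly in $t\in(1/n,1-1/n)$. Evaluating at $t=\overline{F}(X_{n-k:n}x)\asymp(k/n)x^{-1/\gamma_0}$ and multiplying by $\sqrt{n/k}$, the KMT remainder is bounded by $O_{\mathbf{P}}(k^{-\nu} x^{-(1/2-\nu)/\gamma_0})$; after multiplying by the target weight $x^{(1/2-\epsilon)/\gamma_0}$ this becomes $O_{\mathbf{P}}(k^{-\nu} x^{(\nu-\epsilon)/\gamma_0})$. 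Choosing $0<\nu<\epsilon$ makes the $x$-exponent negative and $k^{-\nu}\to 0$, so the KMT remainder is $o_{\mathbf{P}}(1)$ in the prescribed weighted sup-norm.

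It then remains to show that $\sqrt{n/k}\,B_n(\overline{F}(X_{n-k:n}x))$ equals $W(x^{-1/\gamma_0}) - x^{-1/\gamma_0}W(1) + o_{\mathbf{P}}(x^{-(1/2-\epsilon)/\gamma_0})$ for a standard Wiener process $W$. Write $B_n(t)=W_n(t)-tW_n(1)$ and define $W(s):=\sqrt{n/k}\,W_n((k/n)s)$, which is a standard Wiener process by Brownian scaling. The core task is to pass from $W_n(\overline{F}(X_{n-k:n}x))$ to $W_n((k/n)x^{-1/\gamma_0})$: the L\'evy modulus of continuity of $W_n$ together with the Potter-type estimate for $(n/k)\overline{F}(X_{n-k:n}x)-x^{-1/\gamma_0}$ shows that this increment is $o_{\mathbf{P}}(x^{-(1/2-\epsilon)/\gamma_0})$ uniformly. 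The linear drift $\sqrt{n/k}\,\overline{F}(X_{n-k:n}x)\,W_n(1) = x^{-1/\gamma_0}W(1) + o_{\mathbf{P}}(1)$ supplies exactly the centering term in the bridge, yielding the Wiener process $W$ that appears also in Lemma~\ref{lemma3}. Absorbing the Wiener process $W$ into the probability space delivers the first assertion.

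For the second (sup-$O_{\mathbf{P}}(1)$) assertion, the triangle inequality gives $x^{(\epsilon-1/2)/\gamma_0}|D_k(x)| \leq x^{(\epsilon-1/2)/\gamma_0}|D_k(x)-W(x^{-1/\gamma_0})| + |W(x^{-1/\gamma_0})|$; since $x^{(\epsilon-1/2)/\gamma_0}\leq x^{(1/2-\epsilon)/\gamma_0}$ on $x\geq 1$, the first piece is $o_{\mathbf{P}}(1)$ by the weighted approximation just established, and the second is bounded by $\sup_{s\in[0,1]}|W(s)|=O_{\mathbf{P}}(1)$. The hard part will be the uniform control of the interaction between the random threshold $X_{n-k:n}$ and the singular KMT weight $(t(1-t))^{1/2-\nu}$ after the change of variables $t=\overline{F}(X_{n-k:n}x)$: the Potter bias of $(n/k)\overline{F}(X_{n-k:n}x)$ must be balanced tightly against the amplification $x^{(1/2-\epsilon)/\gamma_0}$ so that every remainder term is uniformly $o_{\mathbf{P}}(1)$ in $x\geq 1$, and this is what forces the slack $\nu<\epsilon$ in the choice of the KMT exponent.
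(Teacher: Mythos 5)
Your plan — rewrite $D_k(x)=\sqrt{n/k}\,\alpha_n\bigl(\overline{F}(X_{n-k:n}x)\bigr)$, apply a weighted Hungarian approximation, then rescale — is a legitimate self-contained alternative to the paper's proof, which disposes of the proposition in three lines by citing Proposition 3.1 of Einmahl, de Haan and Li (2006) together with Potter's inequality and the bound $x^{(\epsilon-1/2)/\gamma_{0}}W(x^{-1/\gamma_{0}})=O_{\mathbf{P}}(1)$. But your key step, the identification of the bridge drift, is wrong, and the error changes the limiting object. With $W(s):=\sqrt{n/k}\,W_n((k/n)s)$ and $t_n(x):=\overline{F}(X_{n-k:n}x)\approx (k/n)x^{-1/\gamma_{0}}$, the drift term is
\[
\sqrt{n/k}\;t_n(x)\,W_n(1)\approx\sqrt{n/k}\,(k/n)\,x^{-1/\gamma_{0}}W_n(1)=\sqrt{k/n}\;x^{-1/\gamma_{0}}W_n(1)=o_{\mathbf{P}}(1)\cdot x^{-1/\gamma_{0}},
\]
uniformly in $x\geq1$, since $W_n(1)=O_{\mathbf{P}}(1)$ and $k/n\rightarrow0$; it is \emph{not} $x^{-1/\gamma_{0}}W(1)+o_{\mathbf{P}}(1)$, because $W(1)=\sqrt{n/k}\,W_n(k/n)$ is an entirely different random variable from $\sqrt{k/n}\,W_n(1)$. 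The bridge centering thus dies under the rescaling, and the limit is the Wiener process $W(x^{-1/\gamma_{0}})$ exactly as the proposition asserts. Your claimed approximant $W(x^{-1/\gamma_{0}})-x^{-1/\gamma_{0}}W(1)$ is a Brownian bridge in $s=x^{-1/\gamma_{0}}$ and differs from the correct one by $x^{-1/\gamma_{0}}W(1)$, whose weighted sup-norm is $\sup_{x\geq1}x^{-(1/2+\epsilon)/\gamma_{0}}\left\vert W(1)\right\vert =\left\vert W(1)\right\vert =O_{\mathbf{P}}(1)$, not $o_{\mathbf{P}}(1)$ — so your statement is incompatible with the one to be proved. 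A sanity check at $x=1$ settles it: $\overline{F}_n(X_{n-k:n})=k/n$ a.s., hence $D_k(1)=\sqrt{k}\left(1-\tfrac{n}{k}\overline{F}(X_{n-k:n})\right)\overset{\mathcal{D}}{\rightarrow}\mathcal{N}(0,1)$ (as $\overline{F}(X_{n-k:n})$ is the $(k+1)$-th smallest uniform order statistic), matching $W(1)\sim\mathcal{N}(0,1)$ but not the bridge value $0$. The term $-x^{-1/\gamma_{0}}W(1)$ you tried to manufacture here belongs to Lemma \ref{lemma3}, where it is produced separately by the fluctuation $\sqrt{k}\bigl(\left(X_{n-k:n}/U(n/k)\right)^{-1/\gamma_{0}}-1\bigr)$ via the term $T_{k,2}^{\left(1,2\right)}$; building it into Proposition \ref{prop4} would double-count it in the paper's decomposition. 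Note that $D_k$ is centered at the \emph{random} quantity $\overline{F}(X_{n-k:n}x)$, which is precisely why the threshold fluctuation (and with it any $W(1)$-type term) is absent at this stage.

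Two secondary gaps. First, the weighted strong approximation $\left\vert \alpha_n(t)-B_n(t)\right\vert =O_{\mathbf{P}}\bigl(n^{-\nu}(t(1-t))^{1/2-\nu}\bigr)$ on $[1/n,1-1/n]$ (Cs\"{o}rg\H{o}--Cs\"{o}rg\H{o}--Horv\'{a}th--Mason) holds only for $0\leq\nu\leq1/4$, not for all $\nu\in(0,1/2)$ as you assert; this is harmless, since your argument only needs some $0<\nu<\min(\epsilon,1/4)$, but the range must be stated correctly. Second, you never treat the boundary region $t_n(x)<1/n$ (very large $x$), where the weighted approximation is unavailable; there one must bound $\sqrt{n/k}\,\alpha_n(t_n(x))$ and $W(x^{-1/\gamma_{0}})$ directly and check that both are $o_{\mathbf{P}}\bigl(x^{-(1/2-\epsilon)/\gamma_{0}}\bigr)$ — a short but necessary extra step. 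Your random-time replacement via the L\'{e}vy modulus (increments over intervals of length $O_{\mathbf{P}}(\sqrt{k}\,x^{-1/\gamma_{0}}/n)$, rescaled by $\sqrt{n/k}$, give $O_{\mathbf{P}}\bigl(k^{-1/4}x^{-1/(2\gamma_{0})}\sqrt{\log}\bigr)$) is sound. With the drift computation corrected and the boundary region patched, your route does prove the proposition and is genuinely more self-contained than the paper's citation-based proof.
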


\begin{proof}
It suffices to use Proposition 3.1 in \cite{EHL2006}, Potter's inequality
$\left(  \ref{first-order-PotterF}\right)  $ to $\overline{F}$ and fact that
$x^{\left(  \epsilon-1/2\right)  /\gamma_{0}}W\left(  x^{-1/\gamma_{0}%
}\right)  =O_{\mathbf{P}}\left(  1\right)  ,$ therefore we omit the details.
See for instance the Proof of Theorem 2.1 in \cite{BchMN16}.
\end{proof}

\newpage

\section{\textbf{Appendix B\label{sec8}}}%

\begin{figure}[ptbh]%
\centering
\includegraphics[
height=5.8081in,
width=5.8237in
]%
{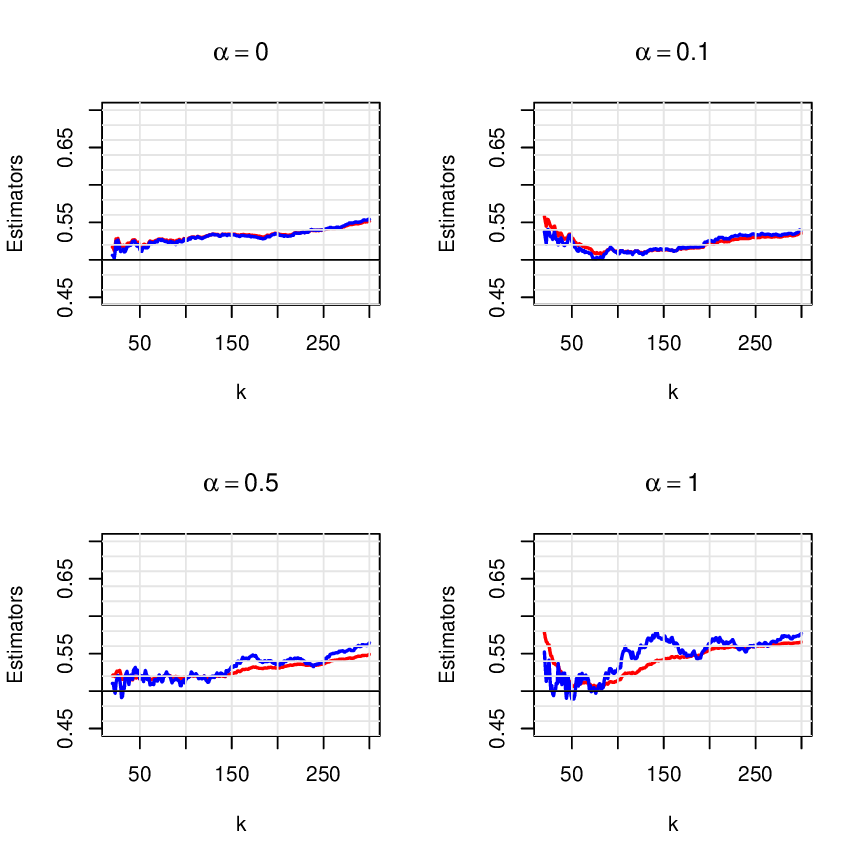}%
\caption{Plotting the estimators $\protect\widehat{\gamma}_{k,\alpha,J}$ (red
line) and $\protect\widehat{\gamma}_{k,\alpha}$ (blue line) for a Fr\'{e}chet
distribution with tail index: $\gamma=0.5$ and diffrent values of $\alpha,$
based on $2000$ samples of size $1000.$}%
\label{fig1}%
\end{figure}
\begin{figure}[ptbh]%
\centering
\includegraphics[
height=5.8081in,
width=5.8237in
]%
{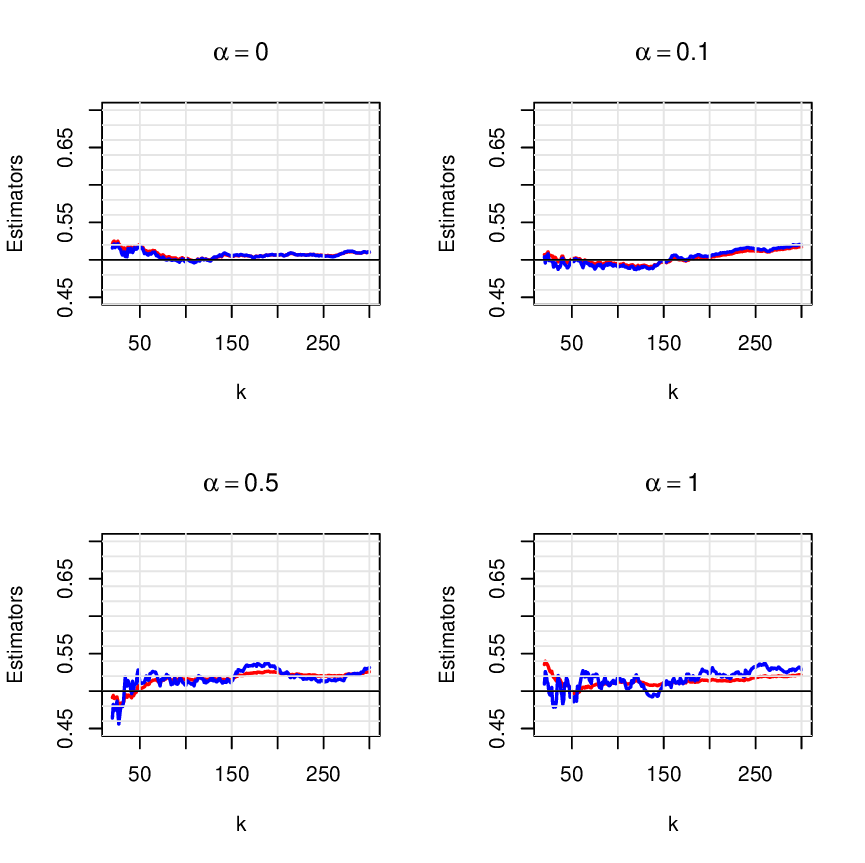}%
\caption{Plotting the estimators $\protect\widehat{\gamma}_{k,\alpha,J}$ (red
line) and $\protect\widehat{\gamma}_{k,\alpha}$ (blue line) for a Burr
distribution with tail index: $\gamma=0.5$ and diffrent values of $\alpha,$
based on $2000 $ samples of size $1000.$}%
\label{fig2}%
\end{figure}
%

\begin{figure}[ptbh]%
\centering
\includegraphics[
height=5.8081in,
width=5.8237in
]%
{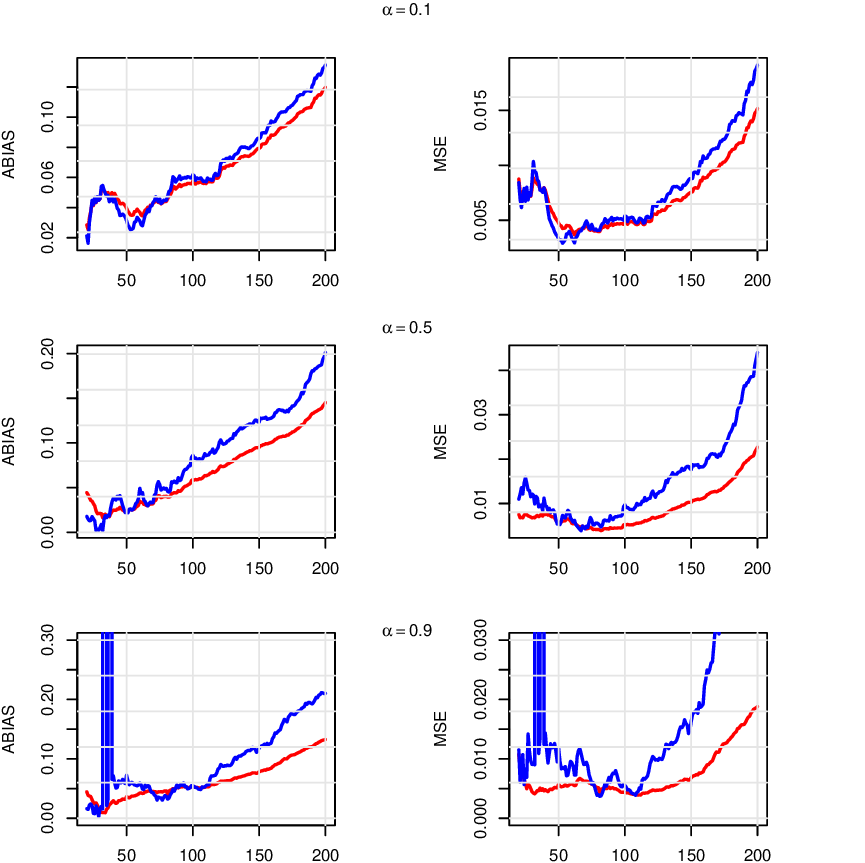}%
\caption{Absolute bias (left panel) and MSE (right panel) of
$\protect\widehat{\gamma}_{k,\alpha,J} $ (red) and $\protect\widehat{\gamma
}_{k,\alpha}$ (blue), corresponding to Frechet distribution with tail index:
$\gamma=0.4$ and diffrent values of $\alpha,$ based on $2000$ samples of size
$300.$}%
\label{fig3}%
\end{figure}
%

\begin{figure}[ptb]%
\centering
\includegraphics[
height=5.8081in,
width=5.8237in
]%
{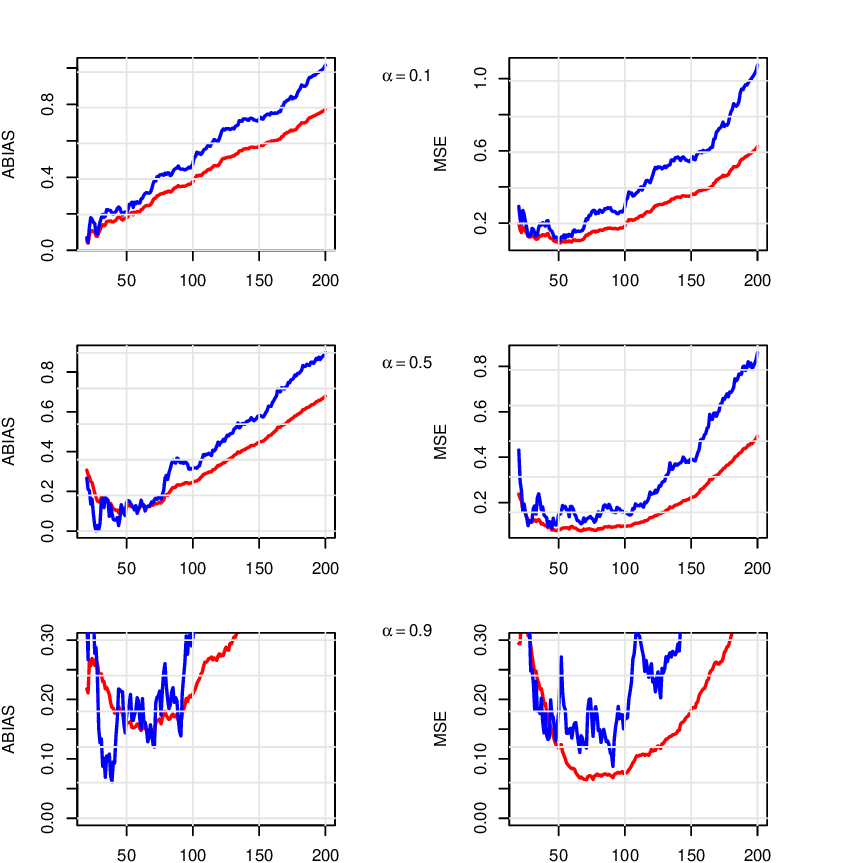}%
\caption{Absolute bias (left panel) and MSE (right panel) of
$\protect\widehat{\gamma}_{k,\alpha,J} $ (red) and $\protect\widehat{\gamma
}_{k,\alpha}$ (blue), corresponding to Frechet distribution with tail index:
$\gamma=1.5$ and diffrent values of $\alpha,$ based on $2000$ samples of size
$300.$}%
\label{fig4}%
\end{figure}
%

\begin{figure}[ptb]%
\centering
\includegraphics[
height=5.8081in,
width=5.8237in
]%
{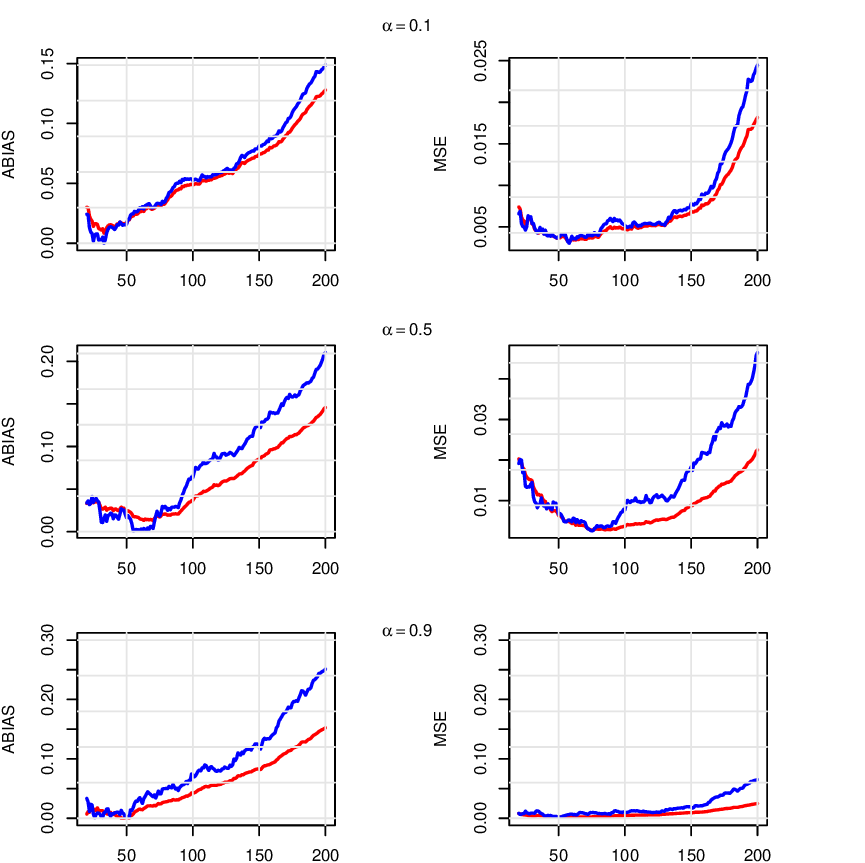}%
\caption{Absolute bias (left panel) and MSE (right panel) of
$\protect\widehat{\gamma}_{k,\alpha,J} $ (red) and $\protect\widehat{\gamma
}_{k,\alpha}$ (blue), corresponding to Burr distribution with tail index:
$\gamma=0.4$ and diffrent values of $\alpha,$ based on $2000$ samples of size
$300.$}%
\label{fig5}%
\end{figure}
\begin{figure}[ptb]%
\centering
\includegraphics[
height=5.8081in,
width=5.8237in
]%
{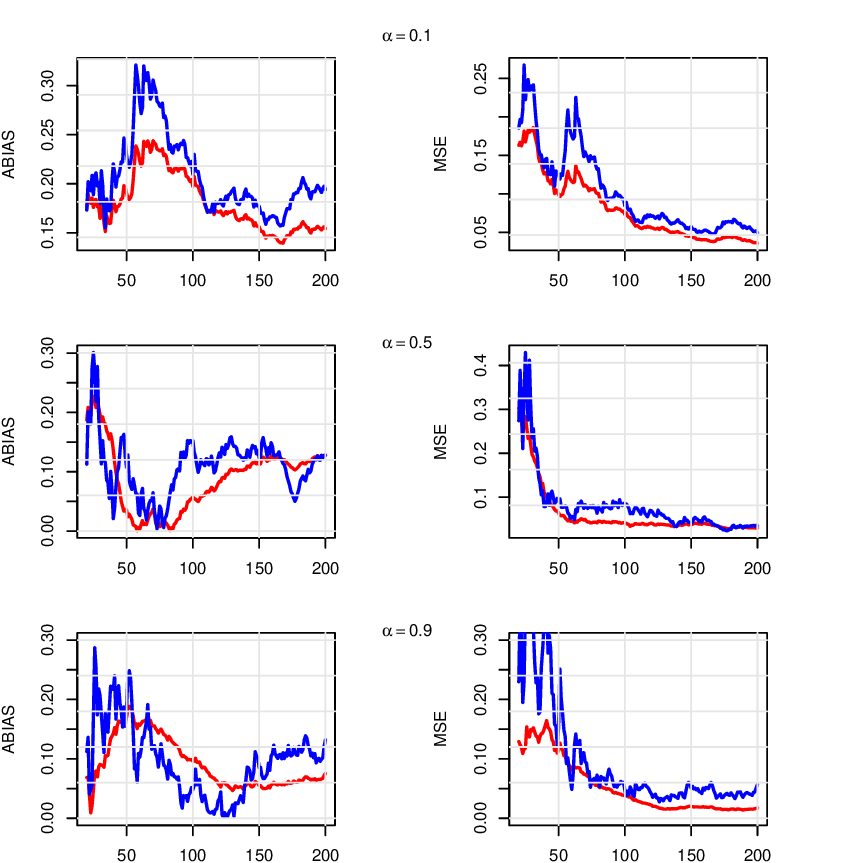}%
\caption{Absolute bias (left panel) and MSE (right panel) of
$\protect\widehat{\gamma}_{k,\alpha,J}$ (red) and $\protect\widehat{\gamma
}_{k,\alpha}$ (blue), corresponding to Burr distribution with tail index:
$\gamma=1.5$ and diffrent values of $\alpha,$ based on $2000$ samples of size
$300.$}%
\label{fig6}%
\end{figure}
%

\begin{figure}[ptb]%
\centering
\includegraphics[
trim=0.000000in 0.000000in -0.498951in -0.504915in,
height=5.8115in,
width=5.8219in
]%
{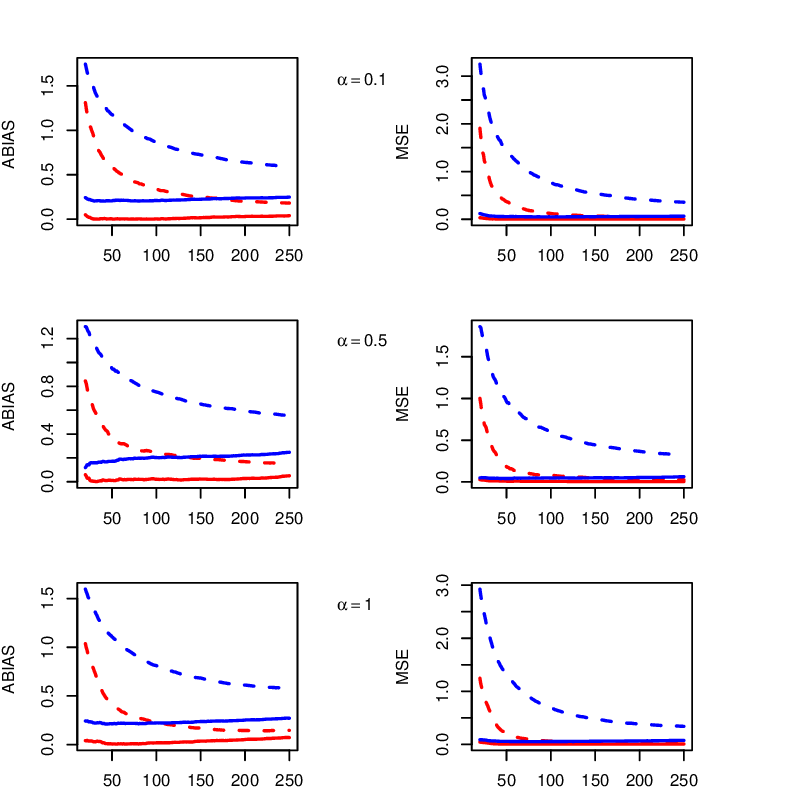}%
\caption{$S1:$ Comparaison in terms of absolute bias (left panel) and MSE
(right panel) of the two estimators $\protect\widehat{\gamma}_{k,\alpha,J}$ (
red) and $\protect\widehat{\gamma}_{k,J}$\ (blue)\ in the both cases when the
estimators are pure (solid line) and 0.1-contaminated (dashed line),
corresponding to different values of $\alpha,$ based on $2000$\ samples of
size $500. $}%
\label{fig7}%
\end{figure}
\ \
\begin{figure}[ptb]%
\centering
\includegraphics[
trim=0.020000in 0.019945in -0.520004in -0.524860in,
height=5.8115in,
width=5.8219in
]%
{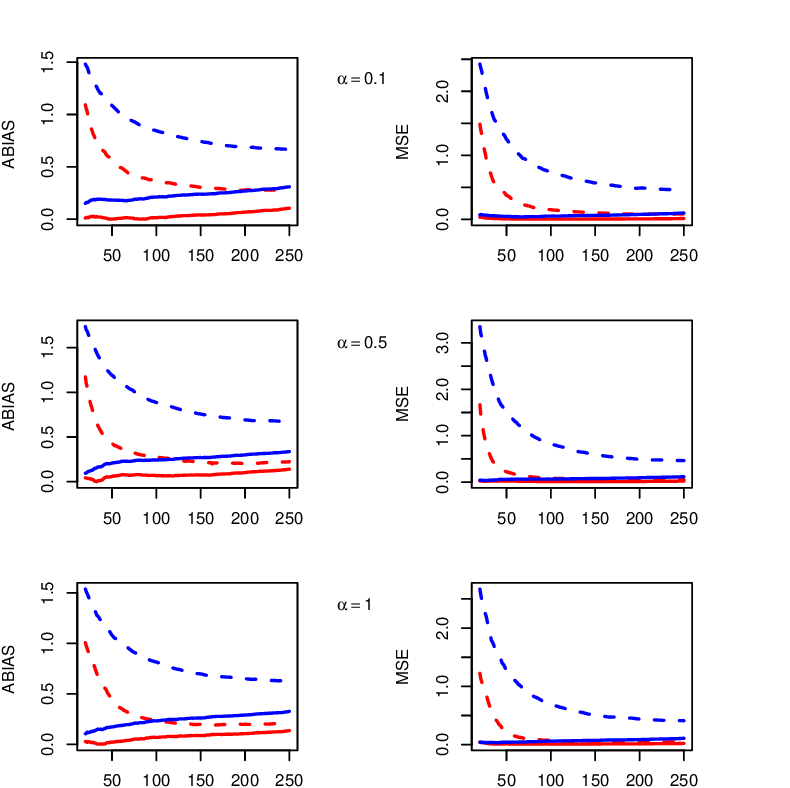}%
\caption{$S2:$ Comparaison in terms of absolute bias (left panel) and MSE
(right panel) of the two estimators $\protect\widehat{\gamma}_{k,\alpha,J}$ (
red) and $\protect\widehat{\gamma}_{k,J}$\ (blue)\ in the both cases when the
estimators are pure (solid line) and 0.1-contaminated (dashed line),
corresponding to different values of $\alpha,$ based on $2000$\ samples of
size $500.$}%
\label{fig8}%
\end{figure}
%

\begin{figure}[ptb]%
\centering
\includegraphics[
trim=0.000000in 0.000000in -0.499478in -0.504390in,
height=5.8115in,
width=5.8219in
]%
{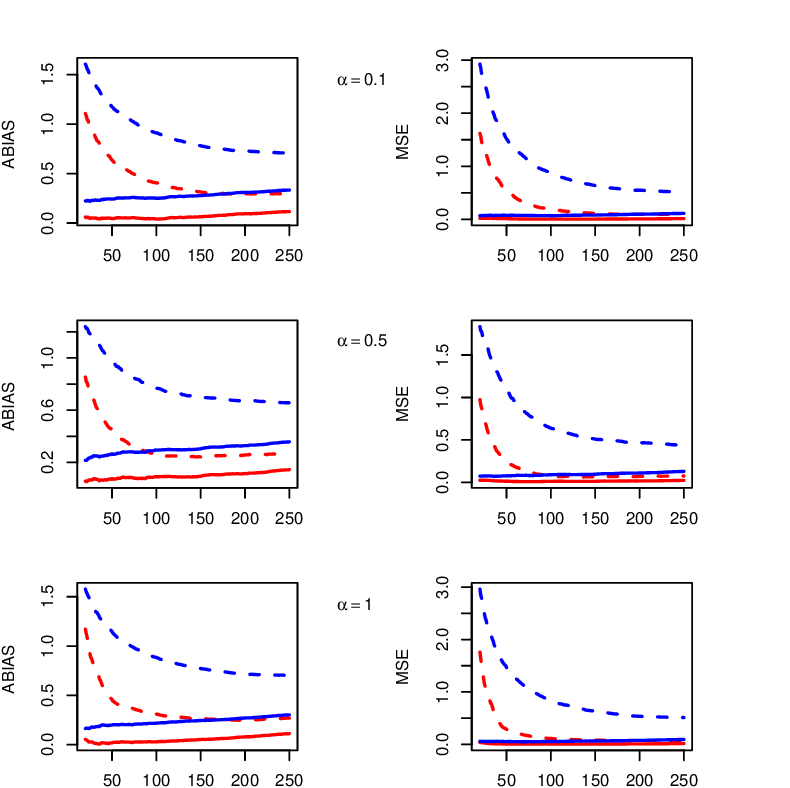}%
\caption{$S3:$ Comparaison in terms of absolute bias (left panel) and MSE
(right panel) of the two estimators $\protect\widehat{\gamma}_{k,\alpha,J}$ (
red) and $\protect\widehat{\gamma}_{k,J}$\ (blue)\ in the both cases when the
estimators are pure (solid line) and 0.1-contaminated (dashed line),
corresponding to different values of $\alpha,$ based on $2000$\ samples of
size $500. $}%
\label{fig9}%
\end{figure}
%

\begin{figure}[ptb]%
\centering
\includegraphics[
trim=0.000000in 0.000000in -0.499478in -0.504390in,
height=5.8115in,
width=5.8219in
]%
{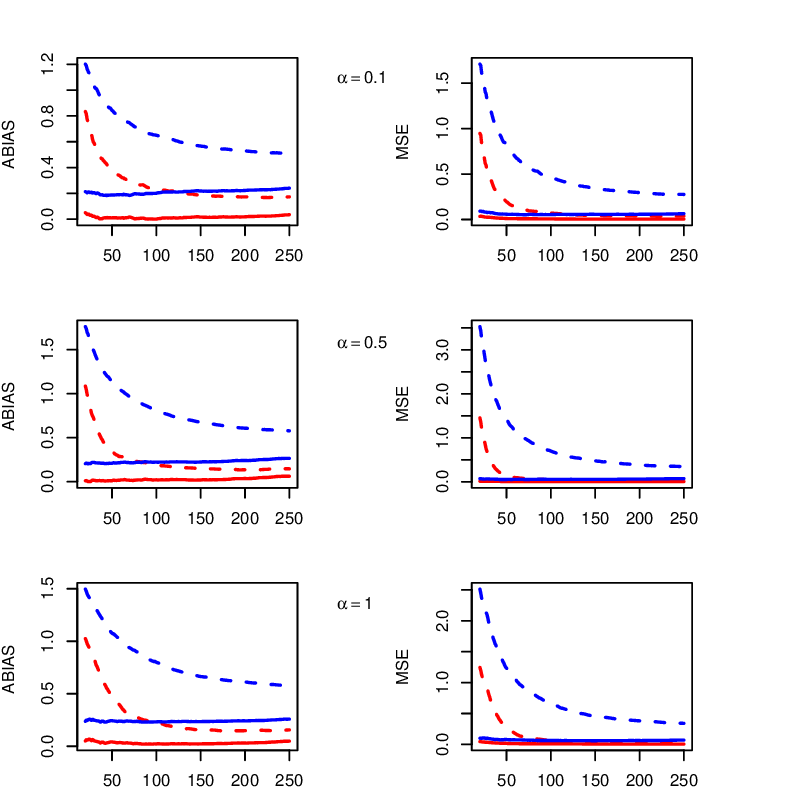}%
\caption{$S4:$ Comparaison in terms of absolute bias (left panel) and MSE
(right panel) of the two estimators $\protect\widehat{\gamma}_{k,\alpha,J}$ (
red) and $\protect\widehat{\gamma}_{k,J}$\ (blue)\ in the both cases when the
estimators are pure (solid line) and 0.1-contaminated (dashed line),
corresponding to different values of $\alpha$, based on $2000$\ samples of
size $500. $}%
\label{fig10}%
\end{figure}

\end{document}